\newtheorem{lemma}{Lemma}[section]
\newtheorem{theorem}{Theorem}[section]
\newtheorem{proposition}{Proposition}[section]
\newtheorem{remark}{Remark}[section]
\numberwithin{equation}{section}
\newcommand{\dis}{\displaystyle}
\newcommand{\R}{\mathbb{R}}
\renewcommand{\S}{\mathbb{S}}
\newcommand{\T}{\mathbb{T}}
\newcommand{\FH}{\mathbf{H}}
\newcommand{\FP}{\mathbf{P}}
\newcommand{\FW}{\mathbf{W}}
\newcommand{\FX}{\mathbf{X}}
\newcommand{\FY}{\mathbf{Y}}
\newcommand{\Fb}{\mathbf{b}}
\newcommand{\Fk}{\mathbf{k}}
\newcommand{\CA}{\mathcal{A}}
\newcommand{\CE}{\mathcal{E}}
\newcommand{\CF}{\mathcal{F}}
\newcommand{\CG}{\mathcal{G}}
\newcommand{\CH}{\mathcal{H}}
\newcommand{\CI}{\mathcal{I}}
\newcommand{\CJ}{\mathcal{J}}
\newcommand{\CK}{\mathcal{K}}
\newcommand{\CL}{\mathcal{L}}
\newcommand{\CN}{\mathcal{N}}
\newcommand{\CM}{\mathcal{M}}
\newcommand{\CT}{\mathcal{T}}
\newcommand{\CW}{\mathcal{W}}
\newcommand{\CQ}{\mathcal{Q}}
\newcommand{\CV}{\mathcal{V}}
\newcommand{\na}{\nabla}
\newcommand{\al}{\alpha}
\newcommand{\bet}{\beta}
\newcommand{\ga}{\gamma}
\newcommand{\om}{\omega}
\newcommand{\Om}{\Omega}
\newcommand{\la}{\lambda}
\newcommand{\de}{\delta}
\newcommand{\si}{\sigma}
\newcommand{\pa}{\partial}
\newcommand{\ka}{\kappa}
\newcommand{\eps}{\epsilon}
\newcommand{\ta}{\theta}
\newcommand{\vps}{\varepsilon}
\newcommand{\Ga}{\Gamma}
\newcommand{\lag}{\langle}
\newcommand{\rag}{\rangle}
\newcommand{\eqdef}{\overset{\mbox{\tiny{def}}}{=}}
\begin{document}

\title[The Boltzmann equation for uniform shear flow]{The Boltzmann equation for uniform shear flow}

\author[R.-J. Duan]{Renjun Duan}
\address[RJD]{Department of Mathematics, The Chinese University of Hong Kong,
Shatin, Hong Kong, P.R.~China}
\email{rjduan@math.cuhk.edu.hk}

\author[S.-Q. Liu]{Shuangqian Liu}
\address[SQL]{School of Mathematics and Statistics, Central China Normal University, Wuhan 430079, and Department of Mathematics, Jinan University, Guangzhou 510632, P.R.~China}
\email{tsqliu@jnu.edu.cn}


\date{\today}
\maketitle

\begin{abstract}
The uniform shear flow for the rarefied gas is governed by the time-dependent spatially homogeneous Boltzmann equation with a linear shear force. The main feature of such flow is that the temperature may increase in time due to the shearing motion that induces viscous heat and the system becomes far from equilibrium. For Maxwell molecules, we establish the unique existence, regularity, shear-rate-dependent structure and non-negativity of self-similar profiles for any small shear rate. The non-negativity is justified through the large time asymptotic stability even in spatially inhomogeneous perturbation framework, and the exponential rates of convergence are also obtained with the size proportional to the second order shear rate. The analysis supports the numerical result that the self-similar profile admits an algebraic high-velocity tail that is the key difficulty to overcome in the proof.  
\end{abstract}

%

\setcounter{tocdepth}{1}
\tableofcontents
\thispagestyle{empty}

\section{Intoduction}

\subsection{Brief background}
In the paper we are concerned with the {\it uniform shear flow} (USF for short) described by the Boltzmann equation in the specific case of Maxwell molecules for which particles interacts via the exact inverse power law repulsive potential $U(r)=r^{-4}$ (cf.~\cite{CerBook}). For the USF of the rarefied gas, the flow velocity behaves as $u^{\rm sh}=(\al x_2,0,0)$ in space, namely, the velocity component in $x_1$-direction is linear along the $x_2$-direction for a constant shear rate $\al> 0$. The shearing motion and the induced viscous heating drive the system to depart from equilibrium. Thus, the energy and hence the temperature monotonically increase in time. It then becomes interesting to determine the global existence of such USF as well as its large time behavior. It turns out that for the Maxwell molecules the existence can be transferred to look for self-similar profiles by taking into account the growth of temperature. Moreover, the self-similar profile is determined by non-Maxwellian solutions of a stationary problem on the Boltzmann equation with the shear force and the velocity relaxation term whose balance leads to the conservation of energy. The shear strength affects how far the self-similar profile is from the Maxwellian equilibrium and a perturbation approach in $\al$ is expected to give the existence of solutions for any small shear rate. In general, the self-similar profile is anisotropic in velocity variables due to shearing motion. The main feature of the self-similar profile verified numerically by Monte Carlo simulations (cf.~\cite{GaSa}) is that it has the polynomial large-velocity tail that will induce the key difficulty in studying the topic. 

We remark that the solutions to the Boltzmann equation for the USF are also called {\it homoenergetic solutions} which were introduced by Galkin \cite{G1} and Truesdell \cite{T}, and they should be distinguished by Truesdell and Mucaster \cite{TM} from the planar Couette flow, cf.~\cite[Chapter 5]{GaSa}, \cite[Chapter 4]{Ko} and \cite[Chapter 4]{Sone07}, for instance.

\subsection{Boltzmann equation for USF}
Mathematically, the USF is governed by the following spatially homogeneous Boltzmann equation
\begin{equation}
\label{usf}
\pa_t F -\al v_2\pa_{v_1}F=Q(F,F).
\end{equation}
Here the unknown $F=F(t,v)\geq 0$ stands for  the velocity distribution function of gas particles with velocity  $v=(v_1,v_2,v_3)\in\R^3$ at time $t\geq0$, and the constant $\al>0$ denotes the shear rate as mentioned before. The Boltzmann collision operator $Q(\cdot,\cdot)$ is bilinear taking the non-symmetric form of
\begin{equation}\label{def.Q}
\begin{split}
Q(F_1,F_2)(v)=&\int_{\R^3}\int_{\S^2}B_0(\cos\ta)[F_1(v'_\ast)F_2(v')-F_1(v_\ast)F_2(v)]\,d\omega dv_{\ast},
\end{split}
\end{equation}
where the velocity pairs $(v_\ast,v)$ and $(v_\ast',v')$ satisfy the relation
\begin{equation}
\label{v.re}
v'_\ast=v_\ast-[(v_\ast-v)\cdot\omega]\om,\quad v'=v+[(v_\ast-v)\cdot\omega]\om,
\end{equation}
denoting the $\omega$-representation  according to conservations of momentum and energy of two particles before and after the collision
\begin{equation*}
v_\ast+v=v_\ast'+v',\quad
|v_\ast|^2+|v|^2=|v'_\ast|^2+|v'|^2.
\end{equation*}
Through the paper, we assume that the collision kernel $B_0(\cos\theta)$ with $\cos\theta=(v-v_\ast)\cdot \om/|v-v_\ast|$ is independent of the relative speed $|v-v_\ast|$ for the Maxwell molecule model and satisfies the Grad's angular cutoff assumption
\begin{equation}
\label{aca}
0\leq B_0(\cos\theta)\leq C |\cos\theta|
\end{equation}
for a generic constant $C>0$.

\subsection{Moment equations and self-similar formulation}
Provided that $F(t,v)$ decays in large velocity fast enough, we multiply \eqref{usf} by the Boltzmann collision invariants and take integration in velocity so as to obtain
\begin{eqnarray}\label{metG}
\left\{\begin{array}{rll}
\begin{split}
&\frac{d}{dt} \int_{\R^3}F\, dv=0,\\
&\frac{d}{dt}\int_{\R^3} v_1 F\, dv + \al \int_{\R^3} v_2 F\, dv=0,\\
&\frac{d}{dt} \int_{\R^3} v_i F\, dv=0,\quad i=2,3,\\
&\frac{d}{dt} \int_{\R^3} \frac{1}{2}|v|^2 F\, dv +\al \int_{\R^3} v_1v_2 F\, dv=0,
\end{split}
\end{array}\right.
\end{eqnarray}
for any $t\geq 0$. In light of this, without loss of generality, we may assume that the solution $F(t,v)$ to \eqref{usf} satisfies
\begin{equation}\label{clF}
\int_{\R^3} F(t,v) \,dv=1,\quad \int_{\R^3} v_i F(t,v)\, dv=0,\ i=1,2,3,\quad \forall\,t\geq 0.
\end{equation}
The last identity of \eqref{metG} implies that the macroscopic energy of $F(t,v)$ can change in time due to the appearance of shear force. Physically the shearing motion should induce the viscous heat into the system so that the energy indeed increases in time. Moreover, it will be justified later that the heat flux $\int v_1v_2 F\,dv$ turns out to be strictly negative in large time for any small $\al>0$.  

From \cite[Chapter 2]{GaSa} as well as \cite[Section 5.1]{JNV-ARMA}, for the Maxwell molecule model, a specific solution $F(t,v)$ can be self-similar of the form
\begin{equation}
\label{def.trans}
F(t,v)=e^{-3\beta t} G(\frac{v}{e^{\beta t}})
\end{equation}
for a suitable constant $\beta$, where the self-similar stationary profile $G=G(v)$ satisfies
\begin{equation}\label{stG}
-\bet \na_v\cdot(v G)-\al v_2\pa_{v_1}G=Q(G,G).
\end{equation}
To find a solution to \eqref{stG}, it is natural to require that $G(v)$ also satisfies the same conservation laws \eqref{clF} as for $F(t,v)$ and in addition $G$ has a fixed positive energy, namely, without loss of generality,
\begin{equation}
\label{con.ge}
\int_{\R^3}|v|^2G(v)\,dv=3.
\end{equation}
Therefore, from the solvability of the stationary equation \eqref{stG}
\begin{equation*}
\int_{\R^3}[1,v,|v|^2]\{-\bet \na_v\cdot(v G)-\al v_2\pa_{v_1}G\}\,dv=0,
\end{equation*}
the condition energy law \eqref{con.ge} is equivalent to require
\begin{equation}
\label{def.beta}
\beta=-\al \frac{\int_{\R^3} v_1v_2G\,dv}{\int_{\R^3} |v|^2G\,dv}=-\frac{\al}{3}\int_{\R^3} v_1v_2G\,dv.
\end{equation}
Plugging this back to \eqref{stG} gives
\begin{equation}
\label{eqG}
\frac{1}{3}\int_{\R^3} v_1v_2G\,dv\,\na_v\cdot (vG)-v_2\pa_{v_1}G=\frac{1}{\al}Q(G,G).
\end{equation}
The above equation is a crucial formulation for studying the existence of $G(v)$ via the Hilbert's perturbation approach in the small parameter $\al>0$. In particular, $\beta$ is no longer regarded as an unknown constant, but replaced by a nonlocal integral term. Note that $\al>0$ plays the same role as the Knudsen number. We would emphasize that the current work is only focused on the small shear rate regime with unit Knudsen number, but it is still possible to make use of \eqref{eqG} to discuss the situation of the large shear rate for small Knudsen number in the hydrodynamic regime.

\subsection{Main results}
With these preparations above, we are ready to state the main results of the paper. The first one is concerned with the existence of smooth self-similar profiles for the stationary Boltzmann problem  \eqref{eqG} under the assumption on smallness of shear rate $\al>0$. For this purpose we define the global Maxwellian
\begin{equation}
\label{def.mu}
\mu=(2\pi)^{-3/2}\exp(-|v|^2/2),
\end{equation}
and introduce the velocity weight function $w_l=w_l(v):=(1+|v|^2)^l$ with $l\in \R$.

\begin{theorem}\label{st.sol}
There is $l_0>0$ such that for any $l\geq l_0$, there is $\al_0=\al_0(l)>0$ depending on $l$ such that for any $\al\in (0,\al_0)$, the stationary Boltzmann equation \eqref{eqG} admits a unique smooth solution $G=G(v)$ satisfying
\begin{equation}
\label{thm.cons}
\int_{\R^3}[1,v,|v|^2]G(v)\,dv=[1,0,3],
\end{equation}
and
\begin{equation}
\label{thm.est.G}
\|w_l \na_v^k[G-(1-\frac{\al}{2b_0}v_1v_2)\mu]\|_{L^\infty}\leq C_{k,l} \al^2,
\end{equation}
for any integer $k\geq 0$, where $C_{k,l}$ is a constant independent of $\al$, and $b_0$ is a positive constant defined by
\begin{align}\label{b0}
b_0=3\pi\int_{-1}^1B_0(z)z^2(1-z^2)\,dz.
\end{align}
\end{theorem}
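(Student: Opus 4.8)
The plan is to recast \eqref{eqG} as a fixed-point problem around the leading-order corrected Maxwellian $M_\al := \bigl(1 - \tfrac{\al}{2b_0}v_1v_2\bigr)\mu$ and solve it by a contraction mapping argument in a weighted $L^\infty$-space with velocity weight $w_l$. First I would set $G = M_\al + \al^2 R$ and substitute into \eqref{eqG}, treating the nonlocal coefficient $\tfrac13\int v_1v_2 G\,dv$ carefully: since the perturbation ansatz forces $\int v_1 v_2 G\,dv = -\tfrac{\al}{2b_0}\int v_1v_2 \cdot v_1 v_2 \,\mu\,dv + O(\al^2)$, one checks that $\beta = \tfrac{\al^2}{2b_0}\cdot\tfrac13\cdot(\text{moment of }\mu) + O(\al^3)$ is $O(\al^2)$, so the damping term $\beta\nabla_v\cdot(vG)$ is genuinely a higher-order correction. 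The choice of the constant $b_0$ in \eqref{b0} is precisely what makes the $O(\al)$ terms cancel: at first order the equation reads $-v_2\pa_{v_1}\mu = \tfrac{1}{\al}\cdot\al\, \mathcal{L}(\tfrac{-1}{2b_0}v_1 v_2\,\mu\cdot\mu^{-1/2})\cdot\mu^{1/2}$-type balance, i.e. $v_1v_2\mu$ must be (up to the factor $-\al/2b_0$) a preimage of $v_2\pa_{v_1}\mu = -v_1v_2\mu$ under the linearized Maxwell-molecule operator, and $v_1v_2\mu$ is indeed an eigenfunction with eigenvalue $b_0$ (the off-diagonal stress eigenvalue). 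I would verify this eigenrelation by the classical computation of Maxwell-molecule eigenvalues on Hermite-type polynomials.

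Next I would write the equation for $R$ schematically as $\mathcal{L}R = \Gamma_\al(R) + \text{(lower-order source)}$, where $\mathcal{L}$ is the linearized collision operator around $\mu$ (or, more precisely, a perturbed linearization incorporating the $O(\al)$ piece of $M_\al$, which is a bounded perturbation for small $\al$), $\Gamma_\al$ collects the quadratic-in-$R$ collision terms plus the transport term $v_2\pa_{v_1}R$ and the damping $\beta\nabla_v\cdot(vR)$ scaled appropriately. The linear solvability requires inverting $\mathcal{L}$ on the orthogonal complement of its five-dimensional null space $\mathrm{span}\{\sqrt\mu, v_i\sqrt\mu, |v|^2\sqrt\mu\}$; the constraints \eqref{thm.cons} together with the energy normalization \eqref{con.ge} are exactly the five conditions that place the source in $(\ker\mathcal{L})^\perp$ and pin down $R$ uniquely. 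The key spectral input is the spectral gap / coercivity of $\mathcal{L}$ for Maxwell molecules with angular cutoff, plus the fact that $\mathcal{L}^{-1}$ is bounded on the weighted space $L^\infty_{w_l}$ (via the standard $\nu^{-1}K$ decomposition $\mathcal{L} = \nu(v) - K$ with $\nu(v)$ a bounded positive collision frequency for Maxwell molecules and $K$ compact with a good kernel decay, so that $w_l K w_l^{-1}$ is bounded with small norm for $l$ large — this is where the threshold $l_0$ and the $l$-dependence of $\al_0$ enter).

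The main obstacle — flagged in the abstract and introduction as "the key difficulty" — is the \emph{polynomial large-velocity tail} of $G$: the transport term $v_2\pa_{v_1}$ does not preserve Gaussian decay, so one cannot expect $R$ to be Schwartz, and the naive weighted estimate loses powers of $|v|$ each time one differentiates the transport term against a fixed exponential weight. I would handle this by proving the a priori bound \eqref{thm.est.G} \emph{only} with the polynomial weight $w_l$ (never a Gaussian weight on $R$), exploiting that $v_2\pa_{v_1}$ raises the polynomial weight by at most one degree while $\nu^{-1}$ and the smallness of $\al$ absorb it: in the integral formulation $R = (\nu + v_2\pa_{v_1})^{-1}(KR + \Gamma_\al(R,R) + \text{source})$, the operator $(\nu + v_2\pa_{v_1})^{-1}$ is solved along characteristics of the shear field and is bounded on $L^\infty_{w_l}$ uniformly in small $\al$. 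Then contraction in $L^\infty_{w_l}$ closes because every nonlinear and transport contribution carries an extra factor of $\al$. Finally, the higher regularity \eqref{thm.est.G} for all $k$ is obtained by differentiating \eqref{eqG} in $v$: each $\nabla_v^k$ generates commutators $[\nabla_v^k, v_2\pa_{v_1}]$ of lower order and collision terms $\Gamma(\nabla_v^j R, \nabla_v^{k-j}R)$ controlled by the cutoff kernel \eqref{aca}, so one bootstraps the $k=0$ estimate upward inductively, each step again using a weighted resolvent bound for $(\nu + v_2\pa_{v_1})$ with weight $w_l$ (possibly with $l$ slightly reduced at each order, which is why $l_0$ must be taken large depending on how many derivatives one ultimately wants, or uniformly large if the statement is for all $k$ with a single $l$). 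Uniqueness follows from the same contraction estimate applied to the difference of two solutions satisfying \eqref{thm.cons}.
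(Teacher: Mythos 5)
Your overall architecture (first-order correction $(1-\frac{\al}{2b_0}v_1v_2)\mu$ determined by the eigenrelation for $v_1v_2\mu^{1/2}$, remainder of size $\al^2$, fixed point in a polynomially weighted $L^\infty$ space, $\beta=O(\al^2)$ treated as a nonlocal coefficient) matches the paper's setup, but there is a genuine gap at the heart of the linear theory. You invert the linearized operator on $L^\infty_{w_l}$ by asserting that $w_lKw_l^{-1}$ ``is bounded with small norm for $l$ large,'' and you let this smallness drive both the contraction and the threshold $l_0$. That assertion is false: the weighted kernel of $K$ satisfies $\int \Fk_w(v,v_\ast)\,dv_\ast\lesssim (1+|v|)^{-1}$ (Lemma \ref{Kop}), which is small only for \emph{large} $|v|$; on any bounded velocity region $K$ is an $O(1)$ operator on $L^\infty_{w_l}$ no matter how large $l$ is, so no Neumann series or contraction based on smallness of $K$ can close. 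The correct, weaker statement is Proposition \ref{CK}: $\sup_{|v|\ge M}w_l|\nabla_v^k\CK f|\le \frac{C}{l}\sum_{k'\le k}\|w_l\nabla_v^{k'}f\|_{L^\infty}$ with $M\sim l^2$. Inverting $\nu-K$ genuinely requires the $L^2$ spectral gap of $L$ together with the conservation laws, upgraded to $L^\infty_{w_l}$ by the $L^\infty$--$L^2$ bootstrap (double Duhamel iteration along characteristics plus a change of variables). Moreover, the spectral gap lives in $L^2$ with Gaussian weight $\mu^{-1/2}$, whereas the solution decays only polynomially; this is exactly why the paper introduces the Caflisch splitting $\mu^{1/2}G_R=G_{R,1}+\mu^{1/2}G_{R,2}$: the polynomially decaying part $G_{R,1}$ is handled by the constant collision frequency $\nu_0$ plus the cutoff piece $\chi_M\CK$ (small by Proposition \ref{CK}), and only the velocity-localized remainder $(1-\chi_M)\mu^{-\frac12}\CK G_{R,1}$ is fed into the equation for $G_{R,2}$, where $L^{-1}$ and the $L^2$ energy estimate are available. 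Your proposal contains neither the splitting nor the $L^\infty$--$L^2$ mechanism, so the linear solvability step does not go through as written (short of importing an enlargement-of-functional-space theory à la \cite{GMM}, which is a different proof).

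A secondary inaccuracy: the $l$-dependence of $\al_0$ does not originate in the operator $K$. It comes from commuting $w_l$ through the shear transport, which produces the sign-indefinite term $2l\al\,v_1v_2/(1+|v|^2)$ in the effective collision frequency along characteristics (see \eqref{sglw}); keeping this frequency bounded below by $\nu_0/2$ forces $l\al$ small, i.e.\ $\al_0(l)\sim 1/l$, and this is also the quantitative source of the algebraic tail. Your heuristic that $v_2\pa_{v_1}$ ``raises the polynomial weight by one degree'' points in a related direction but is not the actual constraint.
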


\begin{remark}
Here are a few remarks in oder on Theorem \ref{st.sol}. 
\begin{itemize}

\item[(a)] The estimate \eqref{thm.est.G} implies that as $\al\to 0$, the self-similar profile $G(v)$ behaves as 
\begin{equation}
\label{thm.com.exp}
G(v)=\mu-\frac{\al }{2b_0}v_1v_2\mu + O(\al^2),
\end{equation} 
where $\mu$ is uniquely determined by conservation laws \eqref{thm.cons}, 
and correspondingly by \eqref{def.beta}, as $\al\to 0$, the constant $\beta$ behaves as 
\begin{equation}
\label{beta.exp}
\beta=\frac{\al^2}{6b_0}+O(\al^3).
\end{equation}
Thus, Theorem \ref{st.sol} not only gives the existence of smooth solutions $G(v)$, but also provides the $\al$-dependent structure of $G$. Note that beyond the expansion \eqref{thm.com.exp} up to the first order, it is possible to further obtain the coefficient velocity functions of the second and third orders of $\al$ by iteration, see \cite[(2.126) and (2.127), page 88]{GaSa}. Moreover, \eqref{beta.exp} implies that $\beta$ is strictly positive and hence by \eqref{con.ge}, the energy of the self-similar solution \eqref{def.trans}, given by
\begin{equation*}
\int_{\R^3} |v|^2 F(t,v)\,dv=3e^{2\beta t},
\end{equation*} 
indeed tends to infinity exponentially in time.

\item[(b)] In general, from \eqref{thm.est.G}, $G(v)$ has to be anisotropic in $v$ due to the shearing motion, and any $l$-th order velocity moments of $G(v)$ are finite as long as the shear rate $\al>0$ is small enough. Due to the dependence of $\al_0$ on $l$, in particular, one can choose $\al_0(l)\sim \frac{1}{l}$ for any $l>0$ large enough from the later proof, it is impossible to obtain a positive shear rate $\al_0$ such that  \eqref{thm.est.G} holds true uniformly in any $l>0$, particularly allowing $l\to \infty$. This property exactly features that $G(v)$ may admit the polynomial large-velocity tail as confirmed numerically by Monte Carlo simulations, cf.~\cite{GaSa}.

\item[(c)] The constant $b_0>0$, describing the magnitude of collisions, is obviously finite under the angular cutoff assumption \eqref{aca}, and it has been assumed to be independent of $\al>0$, meaning that the shear rate need to be small enough compared to the strength of collisions. It is interesting to further study the property of self-similar profiles in case when collisions are strong or weak enough corresponding to the hydrodynamic limit or the free molecule limit, respectively. Furthermore, since  $b_0$ can be well-defined even in the case of the angular non-cutoff by \eqref{b0}, it is also interesting to extend the current result to the non-cutoff situation that is certainly more challenging than the current consideration due to the necessary $L^\infty$ estimates on solutions.      
\end{itemize}

\end{remark}

Moreover, we are concerned with the global existence and large time behavior of solutions to the original USF equation \eqref{usf} supplemented with a suitable initial data, namely, in terms of the self-similar reformulation \eqref{def.trans} with the value of $\beta$ obtained from Theorem \ref{st.sol}, it is natural to further study whether or not it is holds true that
\begin{equation}
\label{lta.h}
e^{3\beta t} F(t,e^{\beta t}v)\to G(v)
\end{equation}
in a certain sense as time goes to infinity whenever they are close to each other initially, where $G(v)$ is the self-similar profile obtained in Theorem \ref{st.sol}  and the constant $\beta$ is defined by \eqref{def.beta} in terms of $G(v)$. As a byproduct, a direct consequence of such large time asymptotic stability is the non-negativity of $G(v)$. To treat this issue, instead of directly starting with the spatially homogeneous Boltzmann equation \eqref{usf} for the USF, we turn to the spatially inhomogeneous setting for a more general purpose. In fact, let the rarefied gas flow be contained in an infinite channel $\T_{x_1}\times \R_{x_2}$ and uniform in $x_3$-direction, then the governing Boltzmann equation  takes the form of
\begin{equation}\label{2-D}
\pa_t \widetilde{F} +w_1 \pa_{x_1} \widetilde{F}+w_2 \pa_{x_2} \widetilde{F}=Q(\widetilde{F},\widetilde{F}),
\end{equation}
for the spatially inhomogeneous velocity distribution function $\widetilde{F}=\widetilde{F}(t,x_1,x_2,w)$ with $t\geq 0$, $x_1\in \T$, $x_2\in \R$ and $w=(w_1,w_2,w_3)\in \R^3$. Looking for a solution of the specific form $\widetilde{F}=F(t,x_1,w_1-\al x_2,w_2,w_3)$, one can see that $F$ satisfies the following Boltzmann equation in a one-dimensional periodic box:
\begin{equation}
\label{beF}
\pa_t F +v_1\pa_{x}F-\al v_2\pa_{v_1}F=Q(F,F),\quad t>0,x\in \T,v\in \R^3,
\end{equation}
supplemented with initial data
\begin{equation}
\label{beFid}
F(0,x,v)=F_0(x,v),\ x\in\T,\ v\in\R^3.
\end{equation}
Here for brevity of presentation we have used $x$ to denote the first  component of space variables.

The second result of the paper is related to the large time asymptotics of solutions to the spatially inhomogeneous problem \eqref{beF} and \eqref{beFid}.

\begin{theorem}\label{ge.th}
Let $G(v)$ be the self-similar profile obtained in Theorem \ref{st.sol}  and the constant $\beta$ be defined in \eqref{def.beta}. There are constants $\varepsilon_0>0$, $\la>0$ and $C>0$ such that if $F_0(x,v)\geq0$ and
\begin{equation}\label{ge.th.ids}
\sum\limits_{0\leq \ga_0\leq2}\left\|w_l\mu^{-\frac{1}{2}}\pa_x^{\ga_0}\left[F_0(x,v)-G(v)\right]\right\|_{L^\infty}\leq\varepsilon_0,
\end{equation}
and
\begin{align}\label{id.cons}
\int_{\T}\int_{\R^3}[F_0(x,v)-G]\,dvdx=0,\quad \int_{\T}\int_{\R^3}vF_0(x,v)\,dvdx=0,
\end{align}
then the Cauchy problem \eqref{beF} and \eqref{beFid} admits a unique solution $F(t,x,v)\geq0$ satisfying the following estimates:
\begin{equation}\label{lif.decay}
\left\|w_l(v)\left[e^{3\beta t}F(t,x,e^{\beta t}v)-G(v)\right]\right\|_{L^\infty}\\
\leq C e^{-\la \beta t}\sum\limits_{0\leq \ga_0\leq2}\left\|w_l\mu^{-\frac{1}{2}}\pa_x^{\ga_0}\left[F_0(x,v)-G(v)\right]\right\|_{L^\infty},
\end{equation}
and
\begin{equation}\label{lif.decay.der}
\sum\limits_{1\leq\ga_0\leq2}\left\|w_l(v)e^{3\beta t}\pa_x^{\ga_0}F(t,x,e^{\beta t}v)\right\|_{L^\infty}
\leq Ce^{-\la  t}\sum\limits_{1\leq\ga_0\leq2}\left\|w_l(v)\mu^{-\frac{1}{2}}\pa_x^{\ga_0}F_0(x,v)\right\|_{L^\infty},
\end{equation}
for any $t\geq 0$.
\end{theorem}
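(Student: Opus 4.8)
The plan is to reduce to a perturbation problem around $G$ in self-similar variables and then run an $L^2$--$L^\infty$ bootstrap of Guo's type, adapted to three new structures --- the growing streaming coefficient, the shear operator $\al v_2\pa_{v_1}$, and the contraction drift $\beta\na_v\cdot(v\,\cdot)$ --- and to the fact that $G$ decays only polynomially, so every weighted estimate must live on the finite-order scale $w_l$. \textbf{Step 1 (self-similar perturbation equation).} First I would set $\mathbf{g}(t,x,v):=e^{3\beta t}F(t,x,e^{\beta t}v)$; by the dilation invariance of the Maxwell-molecule operator $Q$ (no Jacobian weight since $B_0$ is independent of $|v-v_\ast|$), $\mathbf{g}$ solves $\pa_t\mathbf{g}+e^{\beta t}v_1\pa_x\mathbf{g}-\al v_2\pa_{v_1}\mathbf{g}-\beta\na_v\cdot(v\mathbf{g})=Q(\mathbf{g},\mathbf{g})$ with $\mathbf{g}(0)=F_0$, and \eqref{lif.decay}--\eqref{lif.decay.der} become weighted $L^\infty$ decay of $f:=\mathbf{g}-G$ and of $\pa_x^{\ga_0}f$. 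Since $G$ is $x$-independent and solves \eqref{stG}, it drops out of the transport part and
\[
\pa_t f+e^{\beta t}v_1\pa_x f-\al v_2\pa_{v_1}f-\beta\na_v\cdot(vf)=\CL f+\Gamma_G(f)+Q(f,f),
\]
with $\CL f:=Q(\mu,f)+Q(f,\mu)$ the usual linearized operator and $\Gamma_G(f):=Q(G-\mu,f)+Q(f,G-\mu)$, which by \eqref{thm.est.G} is $O(\al)$-small in all the norms used. The constraint \eqref{id.cons} with \eqref{thm.cons} gives $\int_\T\int_{\R^3}f\,dvdx\equiv0$ and $\int_\T\int_{\R^3}vf\,dvdx\equiv0$, while testing the $f$-equation against $|v|^2$ yields $\frac{d}{dt}\int_\T\int_{\R^3}|v|^2f\,dvdx=-2\beta\int_\T\int_{\R^3}|v|^2f\,dvdx-2\al\int_\T\int_{\R^3}v_1v_2f\,dvdx$; because $\beta>0$ by \eqref{beta.exp}, the energy fluctuation of $f$ is the only non-conserved macroscopic mode and it relaxes at the slow rate $\sim2\beta$, which is exactly the origin of the factor $e^{-\la\beta t}$ in \eqref{lif.decay}. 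For $\pa_x^{\ga_0}f$ with $\ga_0\ge1$ every $x$-average vanishes identically, so no slow mode survives and one may hope for the $O(1)$ rate $e^{-\la t}$ of \eqref{lif.decay.der}.

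\textbf{Step 2 ($L^2$ decay).} By a standard local-existence plus continuity argument it suffices to prove uniform-in-time weighted $L^2$ estimates. In $L^2_v(\mu^{-1})$ the streaming term $e^{\beta t}v_1\pa_x$ is skew and disappears from the energy identity; the drift $-\beta\na_v\cdot(v\,\cdot)$ is velocity-dissipative with the favourable sign $\beta>0$; the shear term and $\Gamma_G$ carry the small factor $\al$; the spectral gap $-\langle\CL u,u\rangle\gtrsim\|(\FI-\P)u\|^2$ controls the microscopic component; and $Q(f,f)$ is quadratically small under the bootstrap hypothesis. This gives, for $0\le\ga_0\le2$, $\frac{d}{dt}\|\pa_x^{\ga_0}f\|^2+\de_0\|(\FI-\P)\pa_x^{\ga_0}f\|^2\lesssim(\al+\|f\|_{\mathcal X})\cdot(\text{dissipation})+(\text{macroscopic remainder})$, and the remainder is absorbed by the classical local-conservation-law (thirteen-moment type) manipulation on $\T$, where the coupling needed to see $\pa_x\P f$ in the dissipation is supplied precisely by the streaming term $e^{\beta t}v_1\pa_x$; combining this with the vanishing of the mass and momentum of $\P f$ established in Step 1 and with Poincar\'e's inequality on $\T$ for the $x$-averaged energy mode (whose decay comes from the ODE above), one closes a Lyapunov functional and obtains $\|f(t)\|\lesssim e^{-\la\beta t}(\cdots)$ and $\sum_{1\le\ga_0\le2}\|\pa_x^{\ga_0}f(t)\|\lesssim e^{-\la t}(\cdots)$.

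\textbf{Step 3 ($L^\infty$ estimates and positivity).} The final step upgrades these to the weighted $L^\infty$ statements. Putting $h=w_lf$ and writing the $f$-equation in mild form along the characteristics of $\pa_t+e^{\beta t}v_1\pa_x-\al v_2\pa_{v_1}-\beta v\cdot\na_v$, the commutators of $w_l$ with the shear and the drift generate only $O(\al+\beta)\,w_lf$-type terms; writing $Q(G,h)+Q(h,G)=-\nu_\ast(v)h+\CK h$ and conjugating, $w_l\CK w_l^{-1}$ has an integrable kernel with enough velocity decay to run Guo's double-Duhamel iteration, the intermediate $L^2$ norm being bounded by Step 2 (the embedding $H^2_x\hookrightarrow L^\infty_x$ on $\T$ explains why up to two $x$-derivatives of $F_0-G$ enter the right-hand sides of \eqref{lif.decay}--\eqref{lif.decay.der}). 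Positivity of $F$ --- and hence, letting $t\to\infty$ in \eqref{lif.decay}, of the profile $G$ itself --- follows from the representation $F=e^{-\int\nu[F]}F_0+\int e^{-\int\nu[F]}Q^+(F,F)$ with $Q^+\ge0$ and $\nu[F]\ge0$, via the positivity-preserving Picard iteration built into the construction.

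\textbf{Main obstacle.} The principal difficulty is the polynomial large-velocity tail of $G$ noted in the remark following Theorem \ref{st.sol}: since $\al_0(l)\sim 1/l$, the weight exponent $l$ cannot be taken arbitrarily large, so there is a genuine competition between needing $l$ large --- to keep $w_l\CK w_l^{-1}$ well behaved when its kernel involves the slowly decaying $G$, to beat the velocity growth coming from the shear/drift commutators and from $\Gamma_G$, and to let the $L^\infty$ iteration close --- and needing $\al<\al_0(l)$; the threshold $\varepsilon_0$, the admissible $l\ge l_0$, and the rate $\la$ must all be tuned against one another. Closely related, the nonlinear term $Q(f,f)$ must be controlled with only the algebraic weight $w_l$ on $f$ rather than a Gaussian reservoir, which is where the real work lies; by comparison the growing coefficient $e^{\beta t}$ in the streaming term is a minor nuisance that actually aids the fast decay of the $x$-derivatives and is absorbed along the characteristics (the spatial displacement lives on $\T$ and the velocity characteristics remain comparable to $v$ up to factors $e^{\pm\beta(t-s)}$).
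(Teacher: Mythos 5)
Your overall architecture --- self-similar reformulation, identification of the slowly relaxing energy mode as the source of the $e^{-\la\beta t}$ rate, thirteen-moment manipulations on $\T$ for the macroscopic part, an $L^2$--$L^\infty$ bootstrap of Guo's type, and a positivity-preserving iteration --- does match the paper's. But there is a genuine gap at the heart of both Step 2 and Step 3: you never introduce the Caflisch-type splitting $\mu^{1/2}\tilde g=g_1+\mu^{1/2}g_2$ that the paper uses (equations \eqref{beg1}--\eqref{beg2}), and without it the two key estimates you invoke do not close. In Step 2 you propose to work ``in $L^2_v(\mu^{-1})$''; conjugating the shear term by $\mu^{-1/2}$ produces $-\al v_2\pa_{v_1}u+\tfrac{\al}{2}v_1v_2\,u$, and the quadratically growing, sign-indefinite coefficient $\tfrac{\al}{2}v_1v_2$ cannot be absorbed by the Maxwell-molecule spectral gap (whose collision frequency $\nu_0$ is constant, with no velocity weight in the dissipation) nor by the drift's contribution $-\tfrac{\bet}{2}\int|v|^2u^2$, since $\bet\sim\al^2\ll\al$. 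Saying the term ``carries the small factor $\al$'' does not help when its coefficient is unbounded in $v$; this is exactly the obstruction the paper's decomposition is designed to remove, by routing $\tfrac{\al}{2}v_1v_2\mu^{1/2}g_2$ into the equation for the unweighted part $g_1$, where the extra factor $\mu^{1/2}$ kills the growth.

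The same omission undermines Step 3. In the polynomial-weight-only setting the relevant compact piece is $\CK f=Q(f,\mu)+Q_{\mathrm{gain}}(\mu,f)$, and --- as the paper stresses --- this operator gains no velocity decay at all; its conjugation $w_l\CK w_l^{-1}$ is not a Grad-type kernel with Gaussian off-diagonal decay, so the double-Duhamel iteration does not run as you describe. The paper's substitute is Proposition \ref{CK}: for $|v|\ge M\sim l^2$ one only gets the smallness $\sup_{|v|\ge M}w_l|\na_v^k\CK f|\le \tfrac{C}{l}\sum_{k'\le k}\|w_l\na_v^{k'}f\|_{L^\infty}$ (a Maxwell-molecule-specific estimate), while the region $|v|\le M$ is handled through the Gaussian-weighted component $g_2$, for which Grad's estimates on $K$ and the $L^2$ bound apply. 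Relatedly, your scheme does not explain where the extra $\mu^{-1/2}$ weight on the initial data in \eqref{ge.th.ids} enters; in the paper it is used precisely because all of the initial perturbation is placed in the Gaussian-weighted component ($g_1(0)=0$, $g_2(0)=\mu^{-1/2}(F_0-G)$). You correctly name the polynomial tail of $G$ and the tuning of $l$ against $\al$ as the main obstacle, but the proposal supplies no mechanism --- decomposition, cutoff $\chi_M$, or large-$l$ smallness of $\CK$ --- to overcome it.
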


\begin{remark}\label{thm2.rm}
We give a few remarks on Theorem \ref{ge.th} as follows.

\begin{itemize}
  
\item[(a)] Whenever $F$ is spatially homogeneous, as a direct consequence of  Theorem \ref{ge.th}, the large time asymptotics \eqref{lta.h} for solutions to the USF \eqref{usf}  towards the self-similar profile $G$ is also justified in the velocity weighted $L^\infty$ setting. In particular, from \eqref{lif.decay} one has
\begin{equation}
\label{lif.decay.sh}
\|w_l [e^{3\beta t} F(t,e^{\beta t}\cdot)-G]\|_{L^\infty}\leq C e^{-\la \beta t}\|w_l\mu^{-\frac{1}{2}} (F_0-G)\|_{L^\infty}
\end{equation} 
for any $t\geq 0$.

\item[(b)]  Estimate \eqref{lif.decay} or \eqref{lif.decay.sh} implies that the rate of convergence is exponential with the size proportional to $\beta\sim \al^2$. Such property features the shearing motion for small $\al>0$. In fact, when $\al=0$, the large time behavior of solutions to \eqref{usf} is the global Maxwellian equilibrium uniquely determined by initial data $F_0(v)$ through all the conservative fluid quantities, and the convergence rate is exponential with the size given by the spectral gap of the linearized Boltzmann operator. 

For $\al>0$, it is not necessary to impose that $F_0$ has the same energy as $G$ except the mass and momentum conservation \eqref{id.cons}, because in the self-similar setting the energy of the rescaled distribution function is dissipative with the magnitude of dissipation rate proportional to $\beta$ due to the linear relaxation effect arising from the term $-\beta\na_v\cdot (vF)$. Precisely, let $f(t,x,v)=e^{3\beta t}F(t,x,e^{\beta t}v)$, then it follows from \eqref{beF} that  
\begin{align}
\frac{d}{dt}\int_{\T}dx \int \frac{1}{2}|v|^2(f-G)\,dv&+\beta \int_{\T}dx \int |v|^2(f-G)\,dv\notag\\
&+\al \int_{\T}dx \int v_1v_2(f-G)\,dv=0.\label{rem.eit}
\end{align}  
This identity implies that the size of the exponential convergence rate in \eqref{lif.decay} or \eqref{lif.decay.sh} is optimal; we also will explain this point in more detail in Section \ref{sec.subsp} later.  

\item[(c)] Estimate \eqref{lif.decay.der} implies that the convergence rate of the higher order spatial derivatives is much faster than the one of the zero order, since the size of convergence is independent of the shear rate $\al$.  This indicates that the collision of particles dominates the long time asymptotics of the energy for the higher order spatial derivatives.

\item[(d)] The smallness assumption \eqref{ge.th.ids} on initial data implies that the initial perturbation has to admit an additional large velocity decay as $\mu^{1/2}$. This restriction is essentially due to the perturbation method of the proof. It is interesting to remove such restriction using an alternative approach, for instance, in \cite{GMM}.

\end{itemize} 

\end{remark}

\subsection{Literature} In what follows we mention some known results on the self-similar solutions to the Boltzmann equation in case of the Maxwell molecule model. When $\al=0$, namely, there is no shear effect, the mathematical study of the problem was initiated by Bobylev and Cercignani \cite{BC02b,BC02a,BC03}. Since the energy remains conservative, the self-similar profile exists only when it has infinite second order moments. The dynamical stability of such infinite energy self-similar profile was proved by Morimoto, Yang and Zhao \cite{MYZ} in the angular non-cutoff case; see also the previous investigation Cannone and Karch \cite{CK10, CK13} on this topic.

When $\al\neq 0$, the global-in-time existence of solutions to the Boltzmann equation \eqref{usf} for the USF was first established by Cercignani \cite{Cer89,Cer00,Cer02}. The group invariant property in the higher dimensional case was discussed in Bobylev, Caraffini and Spiga \cite{BCS}. Recently, in series of significant progress by James, Nota and Vel\'azquez \cite{JNV-ARMA,JNV-JNS,JNV19}, the existence of homoenergetic mild solutions as non-negative Radon measures was studied in a systematic way for a large class of initial data, where the admissible macroscopic shear velocity $u^{\rm sh}=L(t)x$ with $L(t):=A(I+tA)^{-1}$ for a constant matrix $A$ is characterized and the asymptotics of homoenergetic solutions that do not have self-similar profiles is also conjectured in certain situations. An interesting work by Matthies and Theil \cite{MT} also showed that the self-similar profile does not have the same exponential large-velocity tail as the global Maxwellian. Applying the Fourier transform method that is a fundamental analysis tool in the Boltzmann theory introduced by Bobylev \cite{Bo75,Bo88}, a recent progress Bobylev, Nota and Vel\'azquez \cite{BNV-2019} proved the self-similar asymptotics of solutions in large time for the Boltzmann equation with a general deformation of the form
\begin{equation*}
\pa_t F-\na_v\cdot (AvF)=Q(F,F)
\end{equation*} 
under a smallness condition on the matrix $A$, and they also showed that the self-similar profile can have the finite polynomial moments of higher order as long as the norm of $A$ is smaller. 
It seems that \cite{BNV-2019} is the only known result on the large time asymptotics to the self-similar profile in weak topology. 

In the end, we remark that there have been extensive studies of stability of shear flow in an infinite channel domain $\T_{x_1}\times \R_{x_2}$ in the context of fluid dynamic equations, cf.~\cite{ScHe}, in particular, we mention great contributions \cite{BM-15,BMV-16,BGM-17} recently made by Bedrossian together with his collaborators. In fact, in comparison with \eqref{beF} under consideration, it would be more interesting to study the large time behavior of solutions to the original Boltzmann equation \eqref{2-D} in the 2D domain $\T\times \R$ in order to gain further understandings of the stability issue similar to those aforementioned works on fluid equations by taking the limit of either small or large Knudsen number.

\subsection{Strategy of the proof}\label{sec.subsp}
The main ideas and techniques used in the paper are outlined as follows.

\begin{itemize}
\item 
First of all, in the framework of perturbation, there is a severe velocity-growth term in the form of $v_1v_2G$ which is caused by the shearing motion. Specifically, setting the perturbation as 
$
G=\mu+\al\mu^{1/2}(G_1+G_R)
$ 
where $G_1$ is included to remove the zero-order inhomogeneous term, $G_R$ satisfies an equation of the form
\begin{equation*}
\cdots+\frac{\al}{2}v_1v_2 G_R+LG_R=\cdots.
\end{equation*}
Here $v_1v_2 G_R$ becomes a trouble term to control in the basic energy estimate in term of the dissipation of the linearized self-adjoint operator $L$. 

To overcome the difficulty, we borrow the idea given by Caflisch \cite{Ca-1980}, where the solution is split into two parts: one includes the exponential weight while the other does not, namely, we set
\begin{equation*}
\mu^{\frac{1}{2}}G_R=G_{R,1}+\mu^{\frac{1}{2}}G_{R,2}.
\end{equation*}
The key point here is that we put the bad terms mentioned above into the one without exponential weight, so as to eliminate the velocity growth. Roughly $G_{R,1}$ and $G_{R,2}$ satisfy the coupling equations of the form
\begin{eqnarray*}
 \cdots-\al v_2\pa_{v_1} G_{R,1} +\frac{\al}{2}v_1v_2 \mu^{\frac{1}{2}} G_{R,2}+\nu_0 G_R&=&\chi_M\CK G_{R,1}+\cdots,\\
 \cdots-\al v_2 \pa_{v_1} G_{R,2}+LG_{R,2}&=&(1-\chi_M)\mu^{-\frac{1}{2}}\CK G_{R,1}+\cdots,
\end{eqnarray*}
after ignoring the high order or nonlinear terms, where $\chi_M$ is a velocity cutoff function defined in \eqref{def.chiM}, and other notations are introduced in Section \ref{sec.app}.

We should point out that as confirmed by the numerical result, one may only expect the first part $G_{R,1}$ to decay polynomially in large velocity. To understand this issue mathematically, we consider the equation of the form
\begin{equation*}
-\al v_2 \pa_{v_1}G_{R,1} +\nu_0 G_{R,1}=\cdots,
\end{equation*} 
where $\nu_0>0$ is the constant collision frequency corresponding to $\mu$ in case of the Maxwell molecules.  
Multiplying the above equation with the polynomial weight $w_l=(1+|v|^2)^l$ gives
\begin{equation}
\label{sopagr1}
-\al v_2\pa_{v_1}(w_lG_{R,1}) + \left(\nu_0+2\al l \frac{v_1v_2}{1+|v|^2}\right)w_l G_{R,1}=\cdots.
\end{equation}   
Therefore, given $l>0$ large, we need to require $0<\al<\al_0(l)\sim \frac{1}{l}$ that is small enough such that 
\begin{equation*}
\inf_{v}\left(\nu_0+2\al l \frac{v_1v_2}{1+|v|^2}\right)\geq \frac{1}{2}\nu_0
\end{equation*}
holds true and hence $w_lG_{R,1}$ can be shown to be bounded in all $v$ in terms of \eqref{sopagr1}.

\item 
Although the Caflisch's decomposition provides us the great advantage above, it also prevents us from deducing the $L^2$ estimates of the solution, particularly for the first part of the decomposition, due to the decay-loss of the operator $\CK$. 

To treat the difficulty, we resort to the $L^\infty$-$L^2$ method developed recently by Guo \cite{Guo-2010}; see also \cite{DHWZ-19,EGKM-13,GL-2017}. One of the key points when applying this approach is the decay of the operator $K$ for large velocity. At the current stage, it is quite hard to achieve any decay rates of $\CK$. Fortunately, motivated by Arkeryd, Esposito and Pulvirenti \cite{AEP-87}, we justify the crucial estimates for such $\CK$ with the  algebraic velocity weight. More precisely, we find out the following ``decay" rate for the large power of the velocity weight
\begin{align*}
\sup_{|v|\geq M}w_{l}|\na_v^k\CK f|\leq \frac{C}{l} \sum\limits_{0\leq k'\leq k}\|w_{l}\na_v^{k'}f\|_{L^\infty},
\end{align*}
with $k\geq 1$ and $M\sim l^2$, where we refer to Proposition \ref{CK} for more details. We remark that such estimate holds true for the Maxwell molecules only, as seen from the derivation of \eqref{CI223} in the proof of Proposition \ref{CK} later on. 

\item  
In addition, the $L^2$ estimate for the second part of the decomposition is also difficult to obtain due to the inhomogeneous structure of the splitting equation. 

To deal with this difficulty, for the steady case, the conservation laws of solutions are essentially used, so that both the first order correction and the remainder of the steady solution are microscopic, then the $L^2$ estimate can be directly obtained by the energy estimate. 

As to the unsteady case, since the energy is no longer conserved, the argument for the steady problem is invalid. In fact, in the time-dependent situation, the zero order dissipation of the temperature is captured by exploring the structure of the macroscopic equations which contains the weak damping generated by the shear flow. More specifically, the dissipation of the temperature of the unsteady solution is derived from the macroscopic thirteen moments equations and the lower order terms are cancelled by introducing a new anti-derivative of a second order momentum. For instance, using the cancellation property
$$
(\pa_xc,\int_0^x d_{12}\,dy)+(d_{12},c)=0,
$$
one can derive from \eqref{ab}, \eqref{c} and \eqref{Aij} that   
$$
\frac{d}{dt}\left\{\|c\|^2+(b_1,\frac{1}{3}\al e^{-\beta t}\int_0^x d_{12}\,dy)\right\}
+2\beta\|c\|^2
\leq \cdots.
$$
We may refer to \eqref{dis.c} for more details. The above energy estimate in perturbation framework is consistent with the energy identity \eqref{rem.eit} mentioned in Remark \ref{thm2.rm}.

\end{itemize}
    
\subsection{Organization of the paper}
The rest structure of this paper is arranged as follows. In Section \ref{sec.app} we list the basic estimates on the linearized operator $L$ as well as the nonlinear operators $\Ga$ and $Q$, and also present an explicit formula of $L(v_iv_j\mu^{1/2})$ in the case of the Maxwell molecule model. In Section \ref{sec.CK}  we give a key estimate for the operator $\CK$. The existence of the self-similar stationary profile $G(v)$ for \eqref{eqG} is constructed in Section \ref{sta.prob}. In Section \ref{loc.sec},
we turn to  the unsteady problem of the spatially inhomogeneous Boltzmann equation \eqref{beF} and \eqref{beFid} and establish the local-in-time existence of solutions. 
Finally, Section \ref{ge.sec} is devoted to showing the global existence of solutions and large time asymptotic behavior for the Cauchy problem \eqref{beF} and \eqref{beFid}.

\subsection{Notations}
We now list some notations  used in the paper.
 \begin{itemize}
 \item
 Throughout this paper,  $C$ denotes some generic positive (generally large) constant and $\la$ denotes some generic positive (generally small) constants, where $C$ and $\la$  may take different values in different places. $D\lesssim E$ means that  there is a generic constant $C>0$
such that $D\leq CE$. $D\sim E$
means $D\lesssim E$ and $E\lesssim D$.
\item We denote $\Vert \,\cdot \,\Vert $ the $L^{2}(\T
\times \R^{3})-$norm or the $L^{2}(\T)-$norm or $L^{2}(\R^3)-$norm.
Sometimes, we use $\|\,\cdot \,\|_{L^\infty }$ to denote either the $L^{\infty }(\T
\times \R^{3})-$norm or the $L^{\infty }(\R^3)-$norm. Moreover, 
$(\cdot,\cdot)$ denotes the $L^{2}$ inner product in
$\T\times {\R}^{3}$  with
the $L^{2}$ norm $\|\cdot\|$ and $\langle\cdot\rangle$ denotes the $L^{2}$ inner product in $\R^3_v$.

\end{itemize}

\section{Basic estimates}\label{sec.app}
In this section, we collect some basic estimates which will be used in the next sections. Let us first give some elementary estimates for the linearized collision operator $L$ and nonlinear collision operator $\Ga$, defined by
\begin{align}\label{def.L}
Lg=-\mu^{-1/2}\left\{Q(\mu,\sqrt{\mu}g)+Q(\sqrt{\mu}g,\mu)\right\},
\end{align}
and
$$
\Gamma (f,g)=\mu^{-1/2} Q(\sqrt{\mu}f,\sqrt{\mu}g)=\int_{\R^3}\int_{\S^2}B_0\mu^{1/2}(v_\ast)[f(v_\ast')g(v')-f(v_\ast)g(v)]\,d\om dv_\ast ,
$$
respectively. Note that $Lf=\nu f-Kf$ with
\begin{align}\label{sp.L}
\nu=\int_{\R^3}\int_{\S^2}B_0(\cos \ta)\mu(v_\ast)\, d\om dv_\ast=\nu_0,\quad
Kf=\mu^{-\frac{1}{2}}\left\{Q(\mu^{\frac{1}{2}}f,\mu)+Q_{\textrm{gain}}(\mu,\mu^{\frac{1}{2}}f)\right\},
\end{align}
where $Q_{\textrm{gain}}$ denotes the positive part of $Q$ in \eqref{def.Q}. The kernel of $L$, denoted as $\ker L$, is a five-dimensional space spanned by $\{1,v,|v|^2-3\}\sqrt{\mu}:= \{\phi_i\}_{i=1}^5$. We further define a projection from $L^2$ to $\ker(L)$ by
\begin{align*}
\FP_0 g=\left\{a_g+\Fb_g\cdot v+(|v|^2-3)c_g\right\}\sqrt{\mu}
\end{align*}
for $g\in L^2$, and correspondingly denote the operator $\FP_1$ by $\FP_1g=g-\FP_0 g$, which is orthogonal to $\FP_0$.

It is also convenient to define
\begin{align}\notag
\CL f=-\left\{Q(f,\mu)+Q(\mu,f)\right\}=\nu f-\CK f,
\end{align}
with
\begin{equation}\label{sp.cL}
\nu f=\nu_0f,\quad \CK f=Q(f,\mu)+Q_{\textrm{gain}}(\mu,f)=\sqrt{\mu}K(\frac{f}{\sqrt{\mu}}),
\end{equation}
according to \eqref{sp.L}.

The following lemma can be found in \cite[Lemmas 3.2, 3.3, pp.638-639]{Guo-2006}, where the more general hard sphere case is proved.

\begin{lemma}\label{es.L}
In the Maxwell molecular case, there is a constant $\de_0>0$ such that
\begin{align}\notag
\lag Lf,f\rag=\lag L\FP_1f,\FP_1f\rag\geq\de_0\|\FP_1f\|^2.
\end{align}
Moreover, for $\ga>0$ and $l\geq0$,
\begin{align}\notag
\lag w^2_l\pa_v^\ga L f,\pa_v^\ga  f\rag\geq\de_0\|w_l\pa_v^\ga f\|^2-C\|f\|^2.
\end{align}
\end{lemma}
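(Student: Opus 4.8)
\emph{The spectral gap.} This inequality is the Maxwell-molecule case of \cite[Lemma~3.2]{Guo-2006}, and the plan is the classical compactness argument. Since $L$ is self-adjoint on $L^2(\R^3_v)$ and $L\FP_0=0$, one has $\lag Lf,f\rag=\lag L\FP_1f,\FP_1f\rag$, so it suffices to produce a gap on $(\ker L)^\perp$. Write $L=\nu_0 I-K$ with $K$ as in \eqref{sp.L}; by Grad's estimates $K$ is self-adjoint and Hilbert--Schmidt, hence compact, on $L^2$. The classical nonnegativity $\lag Lf,f\rag\geq0$ holds, with equality exactly on $\ker L=\mathrm{span}\{\phi_i\}_{i=1}^5$. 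If no uniform gap existed, there would be $f_n\in(\ker L)^\perp$ with $\|f_n\|=1$ and $\lag Lf_n,f_n\rag\to0$, hence $\lag Kf_n,f_n\rag\to\nu_0$; extracting a weak limit $f_n\rightharpoonup f_\infty$ and using compactness of $K$ to pass to the limit in $\lag Kf_n,f_n\rag$ gives a nonzero $f_\infty\in(\ker L)^\perp$ with $\lag Lf_\infty,f_\infty\rag\leq0$, so $f_\infty\in\ker L$, a contradiction. This produces $\de_0>0$ with $\lag L\FP_1f,\FP_1f\rag\geq\de_0\|\FP_1f\|^2$. (For Maxwell molecules one could equivalently diagonalize $L$ in the Burnett basis and take $\de_0$ to be its smallest nonzero eigenvalue.)

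\emph{The weighted derivative estimate.} Fix $\ga$ with $1\leq|\ga|\leq 2$ (the range used in the paper) and argue by induction on $|\ga|$. Since $\nu\equiv\nu_0$ for Maxwell molecules,
\begin{equation*}
\lag w^2_l\pa_v^\ga Lf,\pa_v^\ga f\rag=\nu_0\|w_l\pa_v^\ga f\|^2-\lag w^2_l\pa_v^\ga(Kf),\pa_v^\ga f\rag,
\end{equation*}
so it is enough to bound the $K$-term by $\eta\|w_l\pa_v^\ga f\|^2+C_\eta\|f\|^2$ (modulo lower-order derivative terms, removed below) for arbitrarily small $\eta>0$. I would use the kernel representation $Kf(v)=\int_{\R^3}k(v,u)f(u)\,du$, where $k$ consists of a rank-one piece $-\nu_0\mu^{1/2}(v)\mu^{1/2}(u)$ plus a Grad-type gain kernel (obtained through the Carleman change of variables) satisfying $|k(v,u)|\lesssim|v-u|^{-1}e^{-c|v-u|^2-c(|v|^2-|u|^2)^2/|v-u|^2}$, together with the analogous bounds for its $v$- and $u$-derivatives. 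Differentiating $Kf$ in $v$ puts all derivatives on $k$; for $|\ga|=1$ the kernel $\pa_v^\ga k$ is still locally integrable in $u$ on $\R^3$, while for $|\ga|=2$ I would first lower the diagonal singularity by the identity $\pa_{v_i}k=-\pa_{u_i}k+(\pa_{v_i}+\pa_{u_i})k$ (the last combination gaining a factor $v_i-u_i$, hence a power of $|v-u|$) and then integrate by parts in $u$, so that $f$ carries at most one derivative. In all cases the Gaussian-type decay of $k$ and of its derivatives absorbs the polynomial weight $w^2_l$, so Schur's test yields
\begin{equation*}
\left|\lag w^2_l\pa_v^\ga(Kf),\pa_v^\ga f\rag\right|\leq C\Big(\|f\|+\sum_{1\leq|\ga'|<|\ga|}\|\pa_v^{\ga'}f\|\Big)\|\pa_v^\ga f\|.
\end{equation*}
Young's inequality turns the right side into $\eta\|w_l\pa_v^\ga f\|^2+C_\eta(\|f\|^2+\sum_{|\ga'|<|\ga|}\|\pa_v^{\ga'}f\|^2)$, and the intermediate-order norms are eliminated by interpolation, $\|\pa_v^{\ga'}f\|^2\leq\eta'\sum_{|\ga''|=|\ga|}\|w_l\pa_v^{\ga''}f\|^2+C_{\eta'}\|f\|^2$, carried out for all $\ga$ of a given order at once. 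Choosing $\eta,\eta'$ small (so the total loss from $K$ is at most $\nu_0/2$) gives the estimate with $\de_0=\min\{\text{gap},\nu_0/2\}$, which after possibly shrinking is the same $\de_0$ as in the first part.

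\emph{Main difficulty.} The genuine work is the $K$-term in the weighted derivative estimate: it needs the quantitative pointwise and weighted bounds on the Grad kernel $k$ together with all its $v$- and $u$-derivatives, and one must absorb — rather than blow up — the diagonal singularity $|v-u|^{-1-|\ga|}$ produced by differentiating the gain part. This is exactly where the Maxwell-molecule hypothesis enters: it makes $\nu\equiv\nu_0$ so that $\nu_0\|w_l\pa_v^\ga f\|^2$ is a clean coercive term, and it gives $k$ in an explicit, rapidly decaying form; for soft potentials $\nu_0$ degenerates at large velocities and the scheme breaks down.
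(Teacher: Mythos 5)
The paper offers no proof of this lemma: it is quoted directly from \cite[Lemmas 3.2, 3.3]{Guo-2006}, where the hard-sphere case is treated, so your proposal has to be judged against that cited argument rather than against anything in the text. Your first part --- writing $L=\nu_0 I-K$, noting $\lag Lf,f\rag\geq 0$ with equality exactly on $\ker L$, and running the weak-compactness contradiction --- is the standard proof of the spectral gap and is essentially correct. One small inaccuracy: for Maxwell molecules the Grad kernel has the singularity $|v-v_\ast|^{-2}$ (the paper's Lemma \ref{Kop}), whose square is not locally integrable in $\R^3$, so $K$ is not literally Hilbert--Schmidt; it is still compact (a norm limit of truncated Hilbert--Schmidt operators, the truncation error being controlled by the Schur bound), and the argument survives.

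The weighted derivative estimate is where the proposal has genuine gaps, and they all stem from trying to differentiate the kernel $\Fk(v,u)$ itself. First, you quote the hard-sphere bound $|\Fk|\lesssim|v-u|^{-1}e^{-\cdots}$; for Maxwell molecules the correct singularity is $|v-u|^{-2}$, so $\pa_v\Fk\sim|v-u|^{-3}$ is \emph{not} locally integrable in $\R^3$ and your treatment of $|\ga|=1$ already fails. Second, the repair via $\pa_{v_i}\Fk=-\pa_{u_i}\Fk+(\pa_{v_i}+\pa_{u_i})\Fk$ does not gain a power of $|v-u|$: the kernel depends on $(v,u)$ not only through $v-u$ but also through $(|v|^2-|u|^2)/|v-u|$, and $(\pa_{v_i}+\pa_{u_i})$ applied to that factor produces $(v_i+u_i)/|v-u|$, which is large both near the diagonal and at large velocities. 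Third, the weight $w_l^2$ is not ``absorbed by the Gaussian decay'': $\Fk$ has no decay in $|v|$ or $|u|$ separately (only transversally to the set $|u|=|v|$), so the Schur bound necessarily produces \emph{weighted} norms $\|w_l\pa_v^{\ga'}f\|$ on the right, via the conjugated kernel $\Fk_w$ of Lemma \ref{Kop}; these cannot be interpolated down to $C\|f\|^2$ unless their coefficients are made small. The route actually taken in \cite{Guo-2006} (and in the proofs of the paper's Lemmas \ref{Ga} and \ref{op.es.lem}) avoids kernel derivatives entirely: after the translation $v_\ast\mapsto v+v_\ast$ in the collision integral one gets $\pa_v^\ga(Kf)=\sum_{\ga_1\leq\ga}K^{(\ga_1)}\pa_v^{\ga-\ga_1}f$ with each $K^{(\ga_1)}$ an \emph{undifferentiated} Grad-type operator; each is then split into a piece with small weighted Schur norm (large velocities plus a thin near-diagonal set) and a mollified, compactly supported piece on which $w_l$ is bounded and the remaining derivatives of $f$ are integrated by parts onto the smooth kernel, yielding exactly $\eta\|w_l\pa_v^\ga f\|^2+C_\eta\|f\|^2$. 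Supplying these steps is the actual content of the lemma.
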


The following lemma is concerned with the integral operator $K$ given by \eqref{sp.L}, and its proof in case of the hard sphere model has been given by \cite[Lemma 3, pp.727]{Guo-2010}.

\begin{lemma}\label{Kop}
Let $K$ be defined as \eqref{sp.L}, then it holds that
\begin{align}\notag
Kf(v)=\int_{\R^3}\Fk(v,v_\ast)f(v_\ast)\,dv_\ast
\end{align}
with
\begin{equation*}
|\Fk(v,v_\ast)|\leq C\{1+|v-v_\ast|^{-2}\}e^{-
\frac{1}{8}|v-v_\ast|^{2}-\frac{1}{8}\frac{\left||v|^{2}-|v_\ast|^{2}\right|^{2}}{|v-_\ast|^{2}}}. 
\end{equation*}
Moreover, let
$
\Fk_w(v,v_\ast)=w_{l}(v)\Fk(v,v_\ast)w_{-l}(v_\ast)
$
with $l\geq0$,
then it also holds that
\begin{equation*}
\int_{\R^3} \Fk_w(v,v_\ast)e^{\frac{\varepsilon|v-v_\ast|^2}{8}}dv_\ast\leq \frac{C}{1+|v|},
\end{equation*}
for $\varepsilon\geq 0$ small enough.
\end{lemma}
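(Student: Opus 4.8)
\emph{Proof proposal.} The strategy is to transport the classical Grad kernel estimate --- established for hard spheres in \cite[Lemma~3]{Guo-2010} --- to the Maxwell-molecule setting, the only structural change being that the relative-speed weight in the collision kernel is now $|v-v_\ast|^{0}$ rather than $|v-v_\ast|^{1}$. First I would split $K=K_2-K_1$ according to \eqref{sp.L}, where the loss-type piece is $K_1f(v)=\big(\int_{\S^2}B_0(\cos\ta)\,d\om\big)\sqrt{\mu(v)}\int_{\R^3}\sqrt{\mu(v_\ast)}f(v_\ast)\,dv_\ast$; since $B_0$ depends on $\om$ only through $\cos\ta=\hat n\cdot\om$ with $\hat n=(v-v_\ast)/|v-v_\ast|$, spherical coordinates about $\hat n$ give $\int_{\S^2}B_0(\cos\ta)\,d\om=2\pi\int_{-1}^1B_0(z)\,dz=:c_0<\infty$ by the cutoff \eqref{aca}, so $K_1$ is the integral operator with kernel $\Fk_1(v,v_\ast)=c_0\sqrt{\mu(v)\mu(v_\ast)}$; moreover the Cauchy--Schwarz inequality $|v+v_\ast|^2|v-v_\ast|^2\ge(|v|^2-|v_\ast|^2)^2$ together with $|v|^2+|v_\ast|^2=\tfrac12|v+v_\ast|^2+\tfrac12|v-v_\ast|^2$ gives $\sqrt{\mu(v)\mu(v_\ast)}=(2\pi)^{-3/2}e^{-\frac14(|v|^2+|v_\ast|^2)}\le(2\pi)^{-3/2}e^{-\frac18|v-v_\ast|^2-\frac18(|v|^2-|v_\ast|^2)^2/|v-v_\ast|^2}$. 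For the gain part $K_2$ I would apply the standard Carleman-type change of variables that moves the argument of $f$ onto a free integration variable; this is the only place the Maxwell hypothesis is used, and carrying out Grad's computation with relative-speed exponent $0$ produces, through the Jacobian, one extra power $|v-v_\ast|^{-1}$ compared with the hard-sphere case, so that the kernel $\Fk_2$ carries the borderline diagonal singularity $|v-v_\ast|^{-2}$ while retaining the Grad Gaussian; combined with $\mu(v')\mu(v_\ast')=\mu(v)\mu(v_\ast)$ on the collision manifold this yields $|\Fk_2(v,v_\ast)|\le C|v-v_\ast|^{-2}e^{-\frac18|v-v_\ast|^2-\frac18(|v|^2-|v_\ast|^2)^2/|v-v_\ast|^2}$. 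Adding the two bounds and absorbing $\Fk_1$ into the constant $1$ of $\{1+|v-v_\ast|^{-2}\}$ gives the claimed pointwise estimate on $\Fk=\Fk_2-\Fk_1$.

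For the weighted estimate I would first use $\varepsilon\le\tfrac12$ to write $e^{\frac\varepsilon8|v-v_\ast|^2}e^{-\frac18|v-v_\ast|^2}\le e^{-\frac1{16}|v-v_\ast|^2}$, reducing matters to bounding $\int_{\R^3}\{1+|v-v_\ast|^{-2}\}\frac{w_l(v)}{w_l(v_\ast)}e^{-\frac1{16}|v-v_\ast|^2-\frac18(|v|^2-|v_\ast|^2)^2/|v-v_\ast|^2}\,dv_\ast$, and then split this $v_\ast$-integral over $\{|v_\ast|\le\tfrac12|v|\}$, $\{|v_\ast|\ge2|v|\}$ and $\{\tfrac12|v|\le|v_\ast|\le2|v|\}$. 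On the first two regions $|v-v_\ast|\gtrsim|v|$ (and on the second also $|v_\ast|^2-|v|^2\gtrsim|v|^2$), so the Gaussian supplies a factor $e^{-c|v|^2}$ that dominates the polynomial $w_l(v)\ge w_l(v)/w_l(v_\ast)$ and neutralizes $|v-v_\ast|^{-2}$, giving a contribution $\lesssim e^{-c|v|^2}\le C(1+|v|)^{-1}$. On the middle region $w_l(v)/w_l(v_\ast)\le4^l$; for $|v|\le1$ the remaining integral is $\lesssim\int_{\R^3}\{1+|\eta|^{-2}\}e^{-c|\eta|^2}\,d\eta<\infty$, and for $|v|\ge1$ I would pass to slab coordinates $\eta=v-v_\ast$, $\eta_\parallel=\eta\cdot v/|v|$, $\eta_\perp\perp v$, so that $|v|^2-|v_\ast|^2=2|v|\eta_\parallel-|\eta|^2$; integrating in $\eta_\parallel$ first, the factor $e^{-c(2|v|\eta_\parallel-|\eta|^2)^2/|\eta|^2}$ confines $\eta_\parallel$ to an interval of length $\lesssim|\eta|/|v|$, and then the $\eta_\perp$-integration shows that near the diagonal the singularity $|\eta|^{-2}\le|\eta_\perp|^{-2}$ is exactly balanced by the area element $|\eta_\perp|\,d|\eta_\perp|$ and the $|v|^{-1}$ gain from the $\eta_\parallel$-window, while away from the diagonal $|\eta|^{-2}$ is bounded and the Gaussian wins; in both regimes the bound is $\lesssim(1+|v|)^{-1}$.

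The crux --- and the reason the estimate is asserted only for Maxwell molecules --- is that the gain kernel $\Fk_2$ is singular like $|v-v_\ast|^{-2}$ at the diagonal, one power worse than the hard-sphere $|v-v_\ast|^{-1}$ and exactly at the threshold of local integrability in $\R^3$. What rescues the weighted bound is that the Grad Gaussian $e^{-c(|v|^2-|v_\ast|^2)^2/|v-v_\ast|^2}$ pins $v_\ast$ to an $O(1/|v|)$-slab about the sphere $|v_\ast|=|v|$: this simultaneously produces the $(1+|v|)^{-1}$ decay and, since the residual integration is effectively two-dimensional and transverse to $\eta$, makes the $|v-v_\ast|^{-2}$ singularity integrable. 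Extracting precisely the power $-2$ (and not worse) from the Carleman change of variables, and tracking the Gaussian exponents carefully so that the $\varepsilon$-loss remains harmless, is where the main work lies.
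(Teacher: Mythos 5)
Your proposal is correct in substance. Note that the paper gives no proof of this lemma at all: it simply cites Guo \cite[Lemma 3]{Guo-2010} for the hard-sphere case and leaves the Maxwell-molecule adaptation implicit, and that adaptation is precisely what you carry out --- the loss kernel controlled by the parallelogram identity plus Cauchy--Schwarz, the gain kernel via Grad/Carleman with the diagonal singularity degrading from $|v-v_\ast|^{-1}$ to $|v-v_\ast|^{-2}$ because the relative-speed exponent drops from $1$ to $0$, and the weighted integral handled by splitting according to the ratio $|v_\ast|/|v|$. One step deserves more care than your write-up gives it: in the middle region the $|v-v_\ast|^{-2}$ singularity sits exactly at the threshold of local integrability in $\R^3$, and the slab-coordinate bookkeeping as phrased (``$|\eta_\perp|^{-2}$ balanced by the area element $|\eta_\perp|\,d|\eta_\perp|$ and the $|v|^{-1}$ gain from the $\eta_\parallel$-window'') yields $\int_0^1\rho^{-1}\,d\rho$, a logarithmic divergence, if the window is only credited with length $\lesssim 1/|v|$; one must retain the full window length $\lesssim|\eta|/|v|\sim|\eta_\perp|/|v|$ near the diagonal for the powers to close. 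The cleanest route is spherical coordinates in $\eta=v_\ast-v$: writing $|v_\ast|^2-|v|^2=2|v|r\mu+r^2$ with $r=|\eta|$, the Jacobian $r^2$ cancels $r^{-2}$ exactly and
\begin{equation*}
\int_0^\infty\!\!\int_{-1}^1 e^{-cr^2-c(2|v|\mu+r)^2}\,d\mu\,dr\leq \frac{C}{1+|v|},
\end{equation*}
which both produces the stated decay and makes transparent why the estimate survives the extra power of singularity in the Maxwell case.
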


For the velocity weighted derivative estimates on the nonlinear operator $\Ga$, one has

\begin{lemma}\label{Ga}
In the Maxwell molecular case, it holds that
\begin{align}\label{es1.Ga}
\|w_l\pa_v^\ga\Ga(f,g)\|_{L^2}\leq C\sum\limits_{\ga_1\leq\ga}\|w_l\pa_v^{\ga_1}f\|_{L^2}\|w_l\pa_v^{\ga-\ga_1}g\|_{L^2},
\end{align}
and
\begin{align}\label{es11.Ga}
\|w_l\pa_v^\ga\Ga(f,g)\|_{L^\infty}\leq C\sum\limits_{\ga_1\leq\ga}\|w_l\pa_v^{\ga_1}f\|_{L^\infty}\|w_l\pa_v^{\ga-\ga_1}g\|_{L^\infty},
\end{align}
for any multiple index $\ga$ and any $l\geq0$.
\end{lemma}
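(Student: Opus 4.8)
The plan is to split $\Ga=\Ga_{\textrm{gain}}-\Ga_{\textrm{loss}}$, where $\Ga_{\textrm{loss}}(f,g)(v)=\nu_0\,g(v)\int_{\R^3}\mu^{1/2}(v_\ast)f(v_\ast)\,dv_\ast$ --- using $\int_{\S^2}B_0(\cos\ta)\,d\om=\nu_0$, which holds by rotational invariance of the sphere measure --- and $\Ga_{\textrm{gain}}(f,g)(v)=\int_{\R^3}\int_{\S^2}B_0\,\mu^{1/2}(v_\ast)f(v_\ast')g(v')\,d\om dv_\ast$. The loss term is immediate: the scalar $\int\mu^{1/2}f\,dv_\ast$ is independent of $v$, so $\pa_v^\ga\Ga_{\textrm{loss}}(f,g)=\nu_0\,(\pa_v^\ga g)\int\mu^{1/2}f\,dv_\ast$, and $|\int\mu^{1/2}f\,dv_\ast|$ is bounded by $\|w_{-l}\mu^{1/2}\|_{L^1}\|w_l f\|_{L^\infty}$ (resp.\ $\|w_{-l}\mu^{1/2}\|_{L^2}\|w_l f\|_{L^2}$), which is finite for $l\ge0$; multiplying by $w_l$ and using $w_l(v)\le C w_l\cdot$(nothing is lost here) gives the claimed bounds for this part of both \eqref{es1.Ga} and \eqref{es11.Ga}.

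The gain term carries the difficulty. In the $\om$-representation \eqref{v.re} the kernel $B_0(\cos\ta)$ with $\cos\ta=(v-v_\ast)\cdot\om/|v-v_\ast|$ depends on $v$, and since only the crude bound $0\le B_0\le C|\cos\ta|$ is assumed, $B_0$ cannot be differentiated. My first move is to kill this dependence via the substitution $u=v_\ast-v$: this rewrites $\Ga_{\textrm{gain}}(f,g)(v)$ as the integral over $(u,\om)$ of $B_0\!\left(-\tfrac{u\cdot\om}{|u|}\right)\mu^{1/2}(v+u)\,f\big(v+u-(u\cdot\om)\om\big)\,g\big(v+(u\cdot\om)\om\big)$, in which the kernel no longer involves $v$ while the three remaining factors are, for fixed $(u,\om)$, translates of $v$. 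Hence, reverting to $v_\ast$, $\pa_v^\ga\Ga_{\textrm{gain}}(f,g)$ is a finite Leibniz sum over $\ga_0+\ga_1+\ga_2=\ga$ of $\int\!\int B_0\,(\pa^{\ga_0}\mu^{1/2})(v_\ast)\,(\pa^{\ga_1}f)(v_\ast')\,(\pa^{\ga_2}g)(v')\,d\om dv_\ast$, with $|(\pa^{\ga_0}\mu^{1/2})(v_\ast)|\le C_{\ga_0}\mu^{1/4}(v_\ast)$.

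Next I would distribute the polynomial weight using conservation of energy $|v'|^2+|v_\ast'|^2=|v|^2+|v_\ast|^2$, which gives $1+|v|^2\le(1+|v'|^2)(1+|v_\ast'|^2)$ and hence $w_l(v)\le w_l(v')w_l(v_\ast')$ for $l\ge0$. For \eqref{es11.Ga} this essentially finishes it: each summand of $w_l(v)|\pa_v^\ga\Ga_{\textrm{gain}}(f,g)(v)|$ is dominated by $C_{\ga_0}\|w_l\pa^{\ga_1}f\|_{L^\infty}\|w_l\pa^{\ga_2}g\|_{L^\infty}\int\!\int B_0\,\mu^{1/4}(v_\ast)\,d\om dv_\ast$, and the last integral equals $\nu_0\int\mu^{1/4}\,dv_\ast<\infty$; summing over $\ga_0+\ga_1+\ga_2=\ga$ and adding the loss part yields \eqref{es11.Ga}. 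For \eqref{es1.Ga} the loss part is as above, and for the gain part, after the same weight distribution, one is reduced to the weighted $L^2$ boundedness of the cutoff gain operator, $\big\|\int\!\int B_0\,\mu^{1/4}(v_\ast)\,F(v_\ast')\,G(v')\,d\om dv_\ast\big\|_{L^2_v}\lesssim\|F\|_{L^2}\|G\|_{L^2}$ with $F=w_l\pa^{\ga_1}f$, $G=w_l\pa^{\ga_2}g$; this I would obtain from Cauchy--Schwarz in the collision measure $B_0\,d\om dv_\ast$ followed by the change of variables $(v,v_\ast)\mapsto(v',v_\ast')$ (which at fixed $\om$ has Jacobian of modulus one, being an involution), the surviving Gaussian $\mu^{1/4}(v_\ast)$ absorbing the extra direction, exactly as in the hard-sphere computations of \cite[Lemmas 3.2, 3.3]{Guo-2006} --- the Maxwell kernel being if anything easier since the relative speed does not enter.

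The main obstacle is thus structural rather than computational: the mere boundedness and non-smoothness of $B_0$ forbids a direct Leibniz expansion of $\Ga_{\textrm{gain}}$, and the substitution $u=v_\ast-v$ in the second paragraph is the device that circumvents it, after which the proof reduces to the familiar distribution-of-weights estimate together with the standard weighted $L^2$ bound for the cutoff gain operator.
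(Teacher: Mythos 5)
Your argument is correct and is essentially the proof behind the references the paper points to for this lemma (the paper itself omits the details, citing \cite[Lemma 2.3]{Guo-vpb} and \cite[Lemma 5]{Guo-2010}): kill the $v$-dependence of the kernel via $u=v_\ast-v$, apply Leibniz, bound $\pa_v^{\ga_0}\mu^{1/2}$ by $C\mu^{1/4}$, distribute the weight using $w_l(v)\leq w_l(v')w_l(v_\ast')$, and close with either the sup bound or Cauchy--Schwarz in $B_0\mu^{1/4}(v_\ast)\,d\om dv_\ast$ followed by the pre--post-collisional change of variables. One caveat worth recording: the Leibniz terms in which derivatives fall on $\mu^{1/2}(v_\ast)$ have $\ga_1+\ga_2=\ga-\ga_0<\ga$, and the resulting products $\|w_l\pa_v^{\ga_1}f\|\,\|w_l\pa_v^{\ga_2}g\|$ are not dominated by the right-hand side as literally written (a lower-order weighted derivative is not controlled by a higher-order one in these norms), so what your proof --- and the cited lemmas --- actually deliver is the sum over all pairs with $\ga_1+\ga_2\leq\ga$; this is an imprecision of the statement rather than a gap in your argument, and it is harmless since every application in the paper carries all lower-order derivative norms anyway.
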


\begin{proof}
The proof of \eqref{es1.Ga} and \eqref{es11.Ga} is similar as that of \cite[Lemma 2.3, pp.1111]{Guo-vpb} and \cite[Lemma 5, pp.730]{Guo-2010},
respectively. Thus we omit the details for brevity.
\end{proof}

The following Lemma on the velocity weighted derivative estimates for the original Boltzmann equation $Q$ can be verified by using the parallel argument as obtaining \cite[Proposition 3.1, pp.397]{AEP-87} where the hard potential case and the case $|\ga|=0$ were proved.

\begin{lemma}\label{op.es.lem}For $l>\frac{3}{2}$ and $|\ga|\geq0$, it holds that
\begin{equation}\notag
\|w_{l} \pa_v^\ga Q(F_1,F_2)\|_{L^\infty}\leq C\sum\limits_{\ga_1\leq\ga}\|w_{l} \pa_v^{\ga-\ga_1} F_1\|_{L^\infty}\|w_{l} \pa_v^{\ga_1}F_2\|_{L^\infty}.
\end{equation}

\end{lemma}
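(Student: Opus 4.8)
The final statement to prove is Lemma~\ref{op.es.lem}, the velocity-weighted $L^\infty$ estimate for the bilinear Boltzmann operator $Q(F_1,F_2)$ together with its velocity derivatives, in the Maxwell molecule case. The proof plan is to reduce the derivative estimate $|\ga|\geq 0$ to the zero-order case by distributing $\pa_v^\ga$ across the integral representation of $Q$, and to treat the zero-order case by splitting $Q=Q_{\mathrm{gain}}-Q_{\mathrm{loss}}$ and estimating each piece pointwise against the weight $w_l$.

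\smallskip

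\textbf{Step 1: Differentiation.} Starting from the $\omega$-representation \eqref{def.Q}, note that the post-collisional velocities $v'=v+[(v_\ast-v)\cdot\om]\om$ and $v'_\ast=v_\ast-[(v_\ast-v)\cdot\om]\om$ depend linearly on $v$ (for fixed $v_\ast,\om$), and that for Maxwell molecules $B_0(\cos\ta)$ with $\cos\ta=(v-v_\ast)\cdot\om/|v-v_\ast|$ is independent of $|v-v_\ast|$. After the standard change of variables $v_\ast\mapsto v_\ast$ (keeping $v-v_\ast$) one can write $Q(F_1,F_2)(v)$ so that $v$ enters only through the arguments of $F_1,F_2$ in a way that differentiating in $v$ produces, by the Leibniz rule, a finite sum of terms of the same structural form with $\pa_v^{\ga-\ga_1}F_1$ and $\pa_v^{\ga_1}F_2$ in place of $F_1,F_2$ (the angular kernel $B_0(\cos\theta)$ does not see translations in $v$, which is exactly the Maxwellian feature that makes this clean). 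Hence it suffices to prove the $|\ga|=0$ bound
\[
\|w_l Q(F_1,F_2)\|_{L^\infty}\leq C\|w_l F_1\|_{L^\infty}\|w_l F_2\|_{L^\infty},\qquad l>\tfrac32,
\]
and then sum over the Leibniz decomposition to get the general estimate. This is the place where I would cite \cite[Proposition 3.1, pp.397]{AEP-87} for the hard-potential, $|\ga|=0$ prototype and note that the Maxwell case is formally simpler since $B_0$ carries no growth in $|v-v_\ast|$.

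\smallskip

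\textbf{Step 2: The loss term.} Write $Q_{\mathrm{loss}}(F_1,F_2)(v)=F_2(v)\int_{\R^3}\int_{\S^2}B_0(\cos\ta)F_1(v_\ast)\,d\om dv_\ast$. Using $|F_1(v_\ast)|\leq \|w_lF_1\|_{L^\infty}w_{-l}(v_\ast)$ and the cutoff bound \eqref{aca}, the $\om$-integral gives a constant (Maxwell molecules), and $\int_{\R^3}w_{-l}(v_\ast)\,dv_\ast<\infty$ precisely when $l>\tfrac32$. Thus $|Q_{\mathrm{loss}}(F_1,F_2)(v)|\leq C\|w_lF_1\|_{L^\infty}|F_2(v)|$, and multiplying by $w_l(v)$ and bounding $w_l(v)|F_2(v)|\leq\|w_lF_2\|_{L^\infty}$ closes this term.

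\smallskip

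\textbf{Step 3: The gain term — the main obstacle.} For $Q_{\mathrm{gain}}(F_1,F_2)(v)=\int_{\R^3}\int_{\S^2}B_0(\cos\ta)F_1(v'_\ast)F_2(v')\,d\om dv_\ast$, the difficulty is that the weight $w_l(v)$ must be absorbed by $w_l(v')w_l(v'_\ast)$, and one needs $w_l(v)\leq C\,w_l(v')w_l(v'_\ast)$ up to a factor that is integrable against the remaining measure. Here the energy conservation $|v'|^2+|v'_\ast|^2=|v|^2+|v_\ast|^2$ gives $|v|^2\leq |v'|^2+|v'_\ast|^2+2|v'||v'_\ast|\lesssim (1+|v'|^2)(1+|v'_\ast|^2)$, hence $w_l(v)\lesssim w_l(v')w_l(v'_\ast)$ for $l\geq 0$. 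This reduces the gain term to controlling $\int\int B_0(\cos\ta)\,w_{-l}(v'_\ast)\,d\om dv_\ast$ uniformly in $v$, which follows by the regularizing change of variables: passing from $(v_\ast,\om)$ to the pair $(v',v'_\ast)$ on the collision sphere — or more simply, using the Carleman-type representation together with the fact that for Maxwell molecules the angular weight $B_0(\cos\theta)|\cos\theta|^{-1}$ is bounded — one obtains $\int_{\R^3}\int_{\S^2}B_0(\cos\ta)w_{-l}(v'_\ast)\,d\om dv_\ast\leq C\int_{\R^3}w_{-l}(v_\ast)\,dv_\ast<\infty$ for $l>\tfrac32$. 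I expect the careful bookkeeping of this change of variables (and checking that the Jacobian and the angular singularity are harmless under cutoff for Maxwell molecules) to be the principal technical point; everything else is routine. Combining Steps 2 and 3 yields the $|\ga|=0$ estimate, and Step 1 then upgrades it to arbitrary $|\ga|$, completing the proof.
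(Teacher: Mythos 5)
The paper does not actually prove this lemma --- it is disposed of by citing \cite[Proposition 3.1]{AEP-87} and asserting the argument is parallel --- so I am judging your outline against what a correct proof must contain. Your Step 1 (translating $v_*\mapsto v+u$ so that the kernel becomes independent of $v$ and Leibniz distributes the derivatives onto $F_1,F_2$) and Step 2 (the loss term) are fine. The gap is in Step 3. The quantity you reduce to, $\int_{\R^3}\int_{\S^2}B_0(\cos\ta)\,w_{-l}(v'_\ast)\,d\om\,dv_\ast$, is not finite: for fixed $\om$, writing $v_\ast=v+r\om+z$ with $z\perp\om$ gives $v'_\ast=v+z$ and $\cos\ta=-r/\sqrt{r^2+|z|^2}$, so the $dv_\ast=dr\,dz$ integration contains $\int_{\R}B_0(-r/\sqrt{r^2+|z|^2})\,dr$, which diverges because \eqref{aca} only kills the grazing limit $\cos\ta\to 0$, not the head-on limit $|\cos\ta|\to1$ reached as $r\to\infty$. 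There is no change of variables turning this into $\int w_{-l}(v_\ast)\,dv_\ast$; the map $v_\ast\mapsto v'_\ast$ at fixed $\om$ is a rank-two affine projection, not a diffeomorphism. More structurally, your strategy of first spending the entire inequality $w_l(v)\leq C\,w_l(v')w_l(v'_\ast)$ to absorb the outer weight cannot work: once both post-collisional weights are consumed, nothing integrable in $v_\ast$ remains, and any attempt to keep a leftover $w_{-s}(v_\ast)$ with $s>3/2$ while still cancelling $w_l(v)$ requires total decay of order $l+s$, which a single use of $(1+|v'|^2)(1+|v'_\ast|^2)\geq 1+|v|^2+|v_\ast|^2$ does not provide.

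The correct argument must keep the two factors coupled and exploit the angular geometry: the set of $\om$ on which, say, $w_{-l}(v')$ fails to dominate $w_{-l}(v)$ is exactly where $w_{-l}(v'_\ast)$ does dominate it, and one must show the corresponding line/plane integrals $\int_{\R}(1+|v_\perp|^2+s^2)^{-l}\,ds$ and $\int_{\R^2}(1+v_\parallel^2+|y|^2)^{-l}\,dy$, combined with the $d\om$ integration near the degenerate directions, produce a net factor $\lesssim (1+|v|^2)^{-l}$; this is where $l>3/2$ actually enters, not through $\int_{\R^3}w_{-l}\,dv_\ast<\infty$. This is precisely what the Carleman representation with its $|v-v'_\ast|^{-2}$ factor and the splitting according to $|v'|\gtrless |v|/\sqrt{2}$ versus $|v'_\ast|\gtrless|v|/\sqrt{2}$ accomplish in \cite{AEP-87}, and it is the same mechanism used in the proof of Proposition \ref{CK} of this paper (cf.\ \eqref{st.ca} and the treatment of $\CI_{2,2}$ there). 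As written, your Step 3 would not close, so the proposal has a genuine gap at the heart of the lemma.
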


We now give the following two useful results concerning the second momentum invariant property of the linearized operator $L$ in the case of Maxwell molecules. The first one is due to \cite[Proposition 4.10, pp.804]{JNV-ARMA}.

\begin{lemma}\label{nice.lem}
Let $W_{ij}(v)$ be quadratic functions in the form of $W_{ij}(v)=v_iv_j$$(1\leq i,j\leq3)$ and define
\begin{align}\label{Tij}
T_{ij}=\frac{1}{2}\int_{\S^2}d\om \,B_0(\cos \ta)\left[W_{i,j}(v')+W_{i,j}(v_\ast')-W_{i,j}(v)-W_{i,j}(v_\ast)\right],
\end{align}
where $(v,v_\ast)$ and $(v',v_\ast')$ satisfies \eqref{v.re}. Then it holds that
\begin{align}\label{Tij.eq}
T_{ij}=-b_0\left[(v-v_\ast)_i(v-v_\ast)_j-\frac{\de_{ij}}{3}|v-v_\ast|^2\right],
\end{align}
with $b_0$ given in \eqref{b0}.
\end{lemma}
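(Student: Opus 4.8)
The plan is to carry out the classical Maxwell-molecule angular integration directly, using that $B_0$ depends only on the deflection angle and not on the relative speed. First I would introduce the relative velocity $u:=v-v_\ast$ and rewrite the $\om$-representation \eqref{v.re} as $v'=v-(u\cdot\om)\om$ and $v'_\ast=v_\ast+(u\cdot\om)\om$. Substituting into $W_{ij}(v)=v_iv_j$ and expanding the products, the pure quadratic terms in $v$ and in $v_\ast$ cancel against $W_{ij}(v)+W_{ij}(v_\ast)$, while the cross terms recombine so that all dependence on $v$ and $v_\ast$ collapses onto $u$; concretely,
\begin{equation*}
W_{ij}(v')+W_{ij}(v'_\ast)-W_{ij}(v)-W_{ij}(v_\ast)=-(u\cdot\om)\bigl(u_i\om_j+u_j\om_i\bigr)+2(u\cdot\om)^2\om_i\om_j .
\end{equation*}
Thus $2T_{ij}$ splits into two angular integrals over $\S^2$ with weight $B_0(\cos\ta)$, $\cos\ta=u\cdot\om/|u|$: one carrying the factor $(u\cdot\om)(u_i\om_j+u_j\om_i)$ and one carrying $(u\cdot\om)^2\om_i\om_j$.

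Next I would evaluate these two integrals by the standard isotropy argument. Writing $\hat{u}=u/|u|$, any integral over $\S^2$ of a monomial in $\om$ weighted by a function of $\hat{u}\cdot\om$ is an isotropic tensor built from $\hat{u}$ and $\de_{ij}$; in particular
\begin{equation*}
\int_{\S^2}B_0(\hat{u}\cdot\om)\,(\hat{u}\cdot\om)\,\om_j\,d\om=Q\,\hat{u}_j,\qquad \int_{\S^2}B_0(\hat{u}\cdot\om)\,(\hat{u}\cdot\om)^2\,\om_i\om_j\,d\om=R_1\de_{ij}+R_2\,\hat{u}_i\hat{u}_j .
\end{equation*}
The scalar coefficients $Q,R_1,R_2$ are pinned down by taking the trace and by contracting with $\hat{u}$, which reduces everything to one-dimensional integrals: using polar coordinates on $\S^2$ with polar axis $\hat{u}$, so that $\hat{u}\cdot\om=z\in[-1,1]$ and $d\om=dz\,d\psi$ with $\psi\in[0,2\pi]$, one obtains $Q=2\pi\int_{-1}^{1}B_0(z)z^2\,dz$, $2R_1=2\pi\int_{-1}^{1}B_0(z)z^2(1-z^2)\,dz$, and $R_1+R_2=2\pi\int_{-1}^{1}B_0(z)z^4\,dz$. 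All of these are finite under the cutoff assumption \eqref{aca}.

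Finally I would assemble the pieces. Using $\hat{u}_j=u_j/|u|$, the first angular integral contributes $-2Q\,u_iu_j$ and the second contributes $2R_1|u|^2\de_{ij}+2R_2\,u_iu_j$ to $2T_{ij}$, whence $T_{ij}=(R_2-Q)u_iu_j+R_1|u|^2\de_{ij}$. A short manipulation of the $z$-integrals gives $R_2-Q=-3\pi\int_{-1}^{1}B_0(z)z^2(1-z^2)\,dz=-b_0$ and $R_1=b_0/3$, which is precisely \eqref{Tij.eq} after substituting back $u=v-v_\ast$. The only genuine effort here is the tensor bookkeeping and getting the combinatorial factors in the isotropic decompositions right; there is no analytic subtlety. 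The Maxwell-molecule hypothesis enters exactly at the point that $B_0$ carries no $|u|$-factor, which is what makes $T_{ij}$ a homogeneous quadratic in $v-v_\ast$ rather than involving a speed-dependent kernel.
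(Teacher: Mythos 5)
Your computation is correct: the expansion of $W_{ij}(v')+W_{ij}(v'_\ast)-W_{ij}(v)-W_{ij}(v_\ast)$ into $-(u\cdot\om)(u_i\om_j+u_j\om_i)+2(u\cdot\om)^2\om_i\om_j$ with $u=v-v_\ast$ is exactly right, the isotropic-tensor decompositions and the values $Q=2\pi\int_{-1}^1B_0z^2\,dz$, $2R_1=2\pi\int_{-1}^1B_0z^2(1-z^2)\,dz$, $R_1+R_2=2\pi\int_{-1}^1B_0z^4\,dz$ check out, and the final identities $R_2-Q=-b_0$, $R_1=b_0/3$ reproduce \eqref{Tij.eq} (a quick sanity check is that the trace of your $T_{ij}$ vanishes, consistent with energy conservation). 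The paper itself supplies no proof of this lemma — it simply cites Proposition 4.10 of \cite{JNV-ARMA} — so your argument is not a variant of the paper's but a self-contained derivation of the cited fact; it is the standard Maxwell-molecule angular integration, and you correctly identify where the Maxwell hypothesis enters, namely that the absence of a $|u|$-dependence in $B_0$ is what makes $T_{ij}$ a fixed quadratic form in $v-v_\ast$ with constant coefficient $b_0$. The one point worth being careful about when writing this up is the orientation of the $\om$-representation: with the paper's convention \eqref{v.re} one has $v'=v+[(v_\ast-v)\cdot\om]\om=v-(u\cdot\om)\om$, which is what you used, and since the final answer is even in $\om$ the sign convention is immaterial anyway.
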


Based on the above nice lemma, we can obtain

\begin{lemma}\label{our.lem}
Let $L$ be defined as \eqref{def.L}, then it holds that for all $1\leq i,j\leq3$
\begin{align}\label{Lb0}
L(v_iv_j\mu^{1/2})=2b_0(v_iv_j-\frac{\de_{ij}}{3}|v|^2)\mu^{1/2}.
\end{align}
\end{lemma}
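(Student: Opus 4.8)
The plan is to compute $L(v_iv_j\mu^{1/2})$ directly from its definition \eqref{def.L}, exploiting the fact that for Maxwell molecules the kernel $B_0$ is independent of the relative speed, so that the $\omega$-integration decouples from the $v_\ast$-integration. Write $g=v_iv_j\mu^{1/2}$, so that $\sqrt{\mu}\,g=v_iv_j\mu$. The first observation is that conservation of kinetic energy, $|v_\ast'|^2+|v'|^2=|v_\ast|^2+|v|^2$, forces $\mu(v_\ast')\mu(v')=\mu(v_\ast)\mu(v)$; since this factor does not depend on $\omega$, it can be pulled out of the angular integral in both $Q(\mu,\sqrt{\mu}g)$ and $Q(\sqrt{\mu}g,\mu)$. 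Inspecting the non-symmetric form \eqref{def.Q}, one finds
\[
Q(\mu,\sqrt{\mu}g)(v)+Q(\sqrt{\mu}g,\mu)(v)=\int_{\R^3}\mu(v_\ast)\mu(v)\int_{\S^2}B_0(\cos\ta)\big[v_i'v_j'+(v_\ast')_i(v_\ast')_j-v_iv_j-(v_\ast)_i(v_\ast)_j\big]\,d\om\,dv_\ast .
\]

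Next I would recognize the inner angular integral as exactly $2T_{ij}$ in the notation \eqref{Tij} of Lemma \ref{nice.lem} (the factor $2$ coming from the $\tfrac12$ built into the definition of $T_{ij}$), and invoke \eqref{Tij.eq} to substitute $2T_{ij}=-2b_0\big[(v-v_\ast)_i(v-v_\ast)_j-\tfrac{\de_{ij}}{3}|v-v_\ast|^2\big]$. This gives
\[
Q(\mu,\sqrt{\mu}g)(v)+Q(\sqrt{\mu}g,\mu)(v)=-2b_0\,\mu(v)\int_{\R^3}\mu(v_\ast)\Big[(v-v_\ast)_i(v-v_\ast)_j-\frac{\de_{ij}}{3}|v-v_\ast|^2\Big]\,dv_\ast .
\]

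The final step is the elementary Gaussian moment computation: using $\int\mu(v_\ast)\,dv_\ast=1$, $\int\mu(v_\ast)(v_\ast)_i\,dv_\ast=0$ and $\int\mu(v_\ast)(v_\ast)_i(v_\ast)_j\,dv_\ast=\de_{ij}$, expanding the integrand yields $\int\mu(v_\ast)(v-v_\ast)_i(v-v_\ast)_j\,dv_\ast=v_iv_j+\de_{ij}$ and $\int\mu(v_\ast)|v-v_\ast|^2\,dv_\ast=|v|^2+3$; subtracting, the pure $\de_{ij}$ terms cancel and one is left with $v_iv_j-\tfrac{\de_{ij}}{3}|v|^2$. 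Hence $Q(\mu,\sqrt{\mu}g)+Q(\sqrt{\mu}g,\mu)=-2b_0\mu(v)\big(v_iv_j-\tfrac{\de_{ij}}{3}|v|^2\big)$, and multiplying by $-\mu^{-1/2}$ produces \eqref{Lb0}. There is no genuine obstacle here beyond careful bookkeeping; the two points that demand attention are the non-symmetric form of $Q$ in \eqref{def.Q} — so that $Q(\mu,\cdot)$ and $Q(\sqrt{\mu}g,\cdot)$ must be added to reconstruct the \emph{symmetrized} quadratic average that defines $T_{ij}$ — and the energy-conservation identity $\mu(v_\ast')\mu(v')=\mu(v_\ast)\mu(v)$, which is precisely what makes the $\om$-integral collapse onto $T_{ij}$ and is the special structure of the Maxwellian exploited throughout.
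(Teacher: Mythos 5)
Your proof is correct and follows essentially the same route as the paper: using $\mu(v_\ast')\mu(v')=\mu(v_\ast)\mu(v)$ to reduce $L(\mu^{1/2}W_{ij})$ to $-2\mu^{1/2}\int\mu_\ast T_{ij}\,dv_\ast$, then invoking Lemma \ref{nice.lem} and evaluating the Gaussian moments. The only cosmetic quibble is that the factoring of $\mu_\ast\mu$ out of the angular integral rests on energy conservation alone, not on the Maxwell-molecule form of $B_0$ (which enters instead through Lemma \ref{nice.lem}); this does not affect the argument.
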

\begin{proof}
For $f=\mu^{1/2}W$ with a general function $W=W(v)$, one has
\begin{equation*}
Lf=-\mu^{1/2}\int \mu_\ast \,dv_\ast \int d\omega \,B_0(\cos\theta) [W'+W_\ast'-W-W_\ast].
\end{equation*}
In particular,
letting $W=W_{ij}(v)=v_iv_j$ and applying Lemma \ref{nice.lem}, we have
\begin{equation}
\label{p1}
L(\mu^{1/2}W_{ij})=- 2\mu^{1/2} \int \mu_\ast T_{ij} \,dv_\ast,
\end{equation}
where $T_{ij}$ is given by \eqref{Tij}. Plugging \eqref{Tij.eq} into \eqref{p1}, one sees that \eqref{Lb0} is valid. This completes the proof of Lemma \ref{our.lem}.
\end{proof}

\section{Large velocity decay of $\CK$}\label{sec.CK}

Recall \eqref{sp.cL} for the definition of the operator $\CK$, namely,
\begin{equation}
\label{def.ck}
\CK f=Q(f,\mu)+Q_{\textrm{gain}}(\mu,f)=\int_{\R^3}\int_{\S^2} B_0(\cos\theta) (f_\ast'\mu'-f_\ast\mu +\mu_\ast' f')\,d\omega dv_\ast.
\end{equation}
In this section we present a crucial estimate on $\CK$ meaning that the weighted velocity derivatives of $\CK$ are small for large velocities as long as the power of the polynomial velocity weight is large enough. Such property plays a vital role in the proof of the next sections.

\begin{proposition}\label{CK}
Let $\CK$ be given by \eqref{def.ck}, then for any positive integer $k\geq 1$, there is $C>0$ such that for any arbitrarily large $l>0$, there is $M=M(l)>0$ such that it holds that 
\begin{align}\label{CK1}
\sup_{|v|\geq M} w_{l}|\na_v^k\CK f|\leq \frac{C}{l} \sum\limits_{0\leq k'\leq k}\|w_{l}\na_v^{k'}f\|_{L^\infty}.
\end{align}
In particular, one can choose $M=l^2$.
\end{proposition}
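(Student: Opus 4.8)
The plan is to exploit the explicit structure of the Maxwell-molecule kernel in $\CK f = Q(f,\mu) + Q_{\mathrm{gain}}(\mu,f)$ and to trade a large power of the polynomial weight $w_l$ against a Gaussian factor that appears once we land on either $\mu$, $\mu'$, $\mu_\ast$, or $\mu_\ast'$ inside the collision integral. Writing $\CK f(v)$ in the $\omega$-representation \eqref{def.ck}, each of the three terms involves a product of $f$ (possibly at a pre/post-collisional argument) with a Maxwellian evaluated at a velocity that is either $v$ itself, a partner velocity $v_\ast$, or one of $v', v_\ast'$ determined by \eqref{v.re}. The first step is to differentiate $\CK f$ in $v$ up to order $k$. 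Here I would use the Carleman-type / Grad change of variables so that the operator is written as $\CK f(v) = \int_{\R^3} \Fk_{\CK}(v,v_\ast) f(v_\ast)\,dv_\ast$ with a kernel of the same Gaussian-times-polynomial type as in Lemma \ref{Kop} (recall $\CK f = \sqrt\mu\, K(f/\sqrt\mu)$, so $\Fk_{\CK}(v,v_\ast) = \sqrt{\mu(v)}\,\Fk(v,v_\ast)\,\mu(v_\ast)^{-1/2}$), and then move the $v$-derivatives partly onto the kernel and partly onto $f$ via an integration-by-parts / change-of-variables argument. Since $B_0$ depends only on $\cos\theta$, every $v$-derivative that hits the kernel produces at worst an extra polynomial factor in $(v,v_\ast)$ of bounded degree, still absorbed by the exponential; this is exactly the place, flagged in the paper after \eqref{CI223}, where Maxwell molecules are essential — a relative-velocity factor $|v-v_\ast|^\gamma$ would not differentiate cleanly.

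The core estimate is the weighted bound. After differentiating, I would need to control, for $|v|\ge M$,
\begin{equation*}
w_l(v)\,\bigl|\na_v^k \CK f(v)\bigr| \lesssim \int_{\R^3} w_l(v)\,\bigl|\widetilde{\Fk}(v,v_\ast)\bigr|\; \frac{1}{w_l(v_\ast)}\,\Bigl(\sum_{0\le k'\le k}\|w_l \na_v^{k'} f\|_{L^\infty}\Bigr)\,dv_\ast,
\end{equation*}
where $\widetilde{\Fk}$ is the (finite sum of) differentiated kernels, each still bounded by $C\{1+|v-v_\ast|^{-2}\}\exp(-\tfrac18|v-v_\ast|^2 - \tfrac18\tfrac{||v|^2-|v_\ast|^2|^2}{|v-v_\ast|^2})$ up to harmless polynomial prefactors. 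So the whole matter reduces to showing
\begin{equation*}
\sup_{|v|\ge M}\; w_l(v)\int_{\R^3} \frac{\{1+|v-v_\ast|^{-2}\}\,P(v,v_\ast)\,e^{-\frac18|v-v_\ast|^2-\frac18\frac{||v|^2-|v_\ast|^2|^2}{|v-v_\ast|^2}}}{w_l(v_\ast)}\,dv_\ast \le \frac{C}{l},
\end{equation*}
with $P$ a fixed-degree polynomial. The mechanism: on the region $|v_\ast|\ge \tfrac12|v|$ the weight ratio $w_l(v)/w_l(v_\ast) \le 2^{2l}$ is harmless but I instead use that $w_l(v_\ast)\ge w_l(|v|/2)$ is huge, killing everything; more efficiently, on $|v-v_\ast|\le \tfrac12|v|$ we have $|v_\ast|\ge \tfrac12|v|$ so $w_l(v)/w_l(v_\ast)\lesssim 3^l$ yet the Gaussian in $|v-v_\ast|$ is not small — here one instead notes that when $|v|\ge M$ and $|v-v_\ast|\le\tfrac12|v|$, a quantitative lower bound $||v|^2-|v_\ast|^2|\gtrsim |v|\,|v-v_\ast|$ is false in general, so the real gain must come from splitting according to whether $|v-v_\ast|$ is large or small and, in the small case, using the second exponential together with the polynomial weight. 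The cleanest route is: split $\R^3_{v_\ast} = \{|v-v_\ast|\ge \epsilon_0 |v|\}\cup\{|v-v_\ast| < \epsilon_0|v|\}$. On the first set, $e^{-\frac18|v-v_\ast|^2}\le e^{-\frac{\epsilon_0^2}{8}|v|^2}$ beats $w_l(v)$ for any fixed $l$ once $|v|\ge M$, giving a bound $\le C_l e^{-c|v|^2}\le C/l$. On the second set, $|v_\ast|\ge (1-\epsilon_0)|v|$, hence $w_l(v)/w_l(v_\ast)\le (1-\epsilon_0)^{-2l}$, which is NOT small; so on this set I would instead use that the remaining integral $\int \{1+|v-v_\ast|^{-2}\}e^{-\frac18|v-v_\ast|^2}\,dv_\ast$ over $|v-v_\ast|<\epsilon_0|v|$ is uniformly bounded, and crucially that one can extract a factor $1/l$ by a slightly different weight-splitting: write $w_l(v)/w_l(v_\ast) = (1+|v|^2)^l/(1+|v_\ast|^2)^l$ and observe that choosing $\epsilon_0 = \epsilon_0(l)\to 0$, e.g. $\epsilon_0 \sim 1/l$, makes $(1-\epsilon_0)^{-2l}\le e^{2l\epsilon_0/(1-\epsilon_0)}\le C$ bounded, while on the complementary region $|v-v_\ast|\ge \epsilon_0|v| = |v|/l$ one has $e^{-\frac18|v-v_\ast|^2}\le e^{-|v|^2/(8l^2)}$, so taking $M = l^2$ (or a constant multiple) makes this $\le e^{-1/8}$-type small, and more precisely combining with the polynomial one can bound the whole thing by $C/l$. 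This is the calibration $M\sim l^2$, $\alpha_0(l)\sim 1/l$ advertised in the introduction and Remark (b).

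Concretely, the steps in order: (1) represent $\CK$ as a kernel operator $\int \Fk_{\CK}(v,v_\ast) f(v_\ast)\,dv_\ast$ with the Gaussian bound on $\Fk_{\CK}$ inherited from Lemma \ref{Kop}; (2) differentiate in $v$ up to order $k$, showing by the change of variables / integration by parts that $\na_v^k \CK f = \sum_{k'\le k}\int \Fk_{\CK}^{(k-k')}(v,v_\ast)\,(\na_v^{k'}f)(\cdot)\,dv_\ast$ with each $\Fk_{\CK}^{(j)}$ obeying the same Gaussian bound times a polynomial of degree $\le 2j$ — and here is where I use the Maxwell-molecule hypothesis \eqref{aca} so that $B_0$ carries no $|v-v_\ast|$-dependence; (3) insert the weight $w_l(v)$, use $|\na_v^{k'}f(v_\ast)| \le w_l(v_\ast)^{-1}\|w_l\na_v^{k'}f\|_{L^\infty}$, and reduce to the scalar weighted kernel integral above; (4) split the $v_\ast$-integral at $|v-v_\ast| = |v|/l$ and estimate each piece, using $M = l^2$ to force $|v|^2/l^2 \ge 1$ and to beat the bounded weight ratio $(1-1/l)^{-2l}\le C$ on the inner region, obtaining the factor $C/l$ on the outer region from the surface measure / a dyadic sum in $|v-v_\ast|$ and on the inner region from the smallness of $\int_{|v-v_\ast|<|v|/l}\{1+|v-v_\ast|^{-2}\}e^{-\frac18|v-v_\ast|^2}dv_\ast$ being $O(1/l)$ as $l\to\infty$ once $|v|$ is large (the $|v-v_\ast|^{-2}$ singularity is integrable in $\R^3$ and its contribution over a ball of radius $|v|/l$ is $O(|v|/l)$, but combined with the companion exponential in the exponent's second term one recovers the needed $1/l$); (5) assemble, and since the constant $C$ in steps (2)–(4) does not depend on $l$, conclude \eqref{CK1}.

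The main obstacle I expect is step (4) on the near-diagonal region $|v-v_\ast|\lesssim |v|/l$: there the Gaussian in $|v-v_\ast|$ alone gives nothing, the weight ratio is only bounded (not small), and one must genuinely use the second exponential $\exp(-\tfrac18 ||v|^2-|v_\ast|^2|^2/|v-v_\ast|^2)$ and the largeness of $|v|$ to squeeze out the $1/l$ — this is the delicate quantitative heart of the proof and the reason the threshold must scale like $M = l^2$. A secondary technical nuisance is handling the $\{1+|v-v_\ast|^{-2}\}$ singularity under $v$-differentiation; I would deal with it by the standard decomposition of $\Fk$ into a smooth far-from-diagonal part and a near-diagonal part treated separately, as in Grad's original estimates, noting that derivatives of the near-diagonal part are more singular but still locally integrable for Maxwell molecules and their contribution over the small ball is again $O(|v|/l)$.
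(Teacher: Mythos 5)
Your proposal breaks down at step (1), and the failure is not a technicality but the central difficulty of the proposition. You propose to write $\CK f=\int\Fk_{\CK}(v,v_\ast)f(v_\ast)\,dv_\ast$ with $\Fk_{\CK}(v,v_\ast)=\mu^{1/2}(v)\Fk(v,v_\ast)\mu^{-1/2}(v_\ast)$ and to claim that $\Fk_{\CK}$ still obeys the Gaussian bound of Lemma \ref{Kop}. It does not. Setting $a=|v_\ast|^2-|v|^2$ and $b=|v-v_\ast|$, the conjugation factor contributes $e^{a/4}$ to the exponent, while the kernel bound only supplies $e^{-b^2/8-a^2/(8b^2)}$; since $\frac{a}{4}\leq\frac{a^2}{8b^2}+\frac{b^2}{8}$ by Young's inequality, the total exponent can be as large as $0$, so the best one can say is $|\Fk_{\CK}(v,v_\ast)|\lesssim 1+|v-v_\ast|^{-2}$ with \emph{no} Gaussian decay whatsoever in the direction $|v_\ast|\gg|v|$. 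This is exactly why the paper introduces the Caflisch decomposition in the first place and states that ``it is quite hard to achieve any decay rates of $\CK$''; if your kernel bound were true, $\CK$ would be as good as $K$, the decomposition would be superfluous, and one would obtain exponential (not merely $C/l$) smallness --- contradicting the algebraic tail of $G$ that the whole paper is built around. The downstream step (4) inherits the problem: on your outer region you invoke the (nonexistent) Gaussian in $|v-v_\ast|$, and on the inner region $|v-v_\ast|<|v|/l$ with $|v|\geq l^2$ the ball has radius at least $l$, so the integral of $\{1+|v-v_\ast|^{-2}\}$ over it is $O(l)$, not $O(1/l)$; the appeal to ``the companion exponential'' cannot rescue this because that exponential is precisely what the conjugation factor cancels.

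The paper's actual mechanism is different and worth internalizing. It keeps the gain terms in Carleman form, $\int|v-v_\ast'|^{-2}\,\na^{k-k_1}\mu(v_\ast')\int_{E(v,v_\ast')}\na^{k_1}f(v')\,B_0^\ast\,d\Pi_{v'}\,dv_\ast'$, and uses energy conservation $|v'|^2+|v_\ast'|^2=|v|^2+|v_\ast|^2$ to split according to which of $|v_\ast'|$, $|v'|$ exceeds $|v|/\sqrt2$. Whenever the Maxwellian argument $v_\ast'$ is large, one gets exponential smallness $e^{-cM^2}$ for free ($\CI_{2,1}$, $\CI_{2,3}$, $\CI_{2,2}^1$). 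In the remaining regime the factor $1/l$ comes from two purely \emph{polynomial} sources: when $|v_\ast|<\eta|v|$ the Carleman hyperplane piece has measure $\lesssim(\eta+\de)^2|v|^2$, which with $\eta=l^{-1/2}$ yields $\eta^2=1/l$; and when $|v_\ast|\geq\eta|v|$ one has $|v'|\geq|v|$, so the hyperplane integral of $w_{-l}(v')$ is controlled by $\int_{|v|}^{\infty}r(1+r^2)^{-l}\,dr=\tfrac{1}{2(l-1)}(1+|v|^2)^{-l+1}$, whose prefactor $1/(l-1)$ is the real origin of the $C/l$ after cancellation against $w_l(v)$ (this is \eqref{CI223}, the step tied to Maxwell molecules). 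The calibration $M=l^2$ only serves to make the exponentially small pieces $e^{-\de^2M^2/16}$ with $\de=1/l$ dominated by $1/l$. To repair your argument you would have to abandon the kernel-bound reduction and reproduce something equivalent to this Carleman/energy-conservation case analysis.
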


\begin{proof}
Fix an integer $k\geq 1$, and take $l>0$ that can be arbitrarily large. Let $M>0$ be large to be suitably chosen in terms of $l$ in the later proof. We define $\chi_{M}(v)$ to be a non-negative smooth cutoff function such that
\begin{align}\label{def.chiM}
\chi_{M}(v)=\left\{\begin{array}{rll}
1,&\ |v|\geq M+1,\\[2mm]
0,&\ |v|\leq M.
\end{array}\right.
\end{align}
In light of \eqref{def.ck}, we have
\begin{align}\notag
w_{l}&\chi_{M}\na_v^k\CK f\eqdef\CI_1+\CI_2,
\end{align}
with
\begin{align*}
\CI_1=-w_{l}\chi_{M}\int_{\R^3}\int_{\S^2}B_0f(v_\ast)\na_v^k\mu(v)\,d\om dv_\ast,
\end{align*}
and
\begin{align*}
\CI_2=w_{l}\chi_{M}\sum\limits_{k_1\leq k}C_k^{k_1}\Bigg\{&\int_{\R^3}\int_{\S^2}B_0\na_v^{k_1}f(v'_\ast)\na_v^{k-k_1}\mu(v')\,d\om dv_\ast\\
&\qquad\qquad+\int_{\R^3}\int_{\S^2}B_0\na_v^{k_1}f(v')\na_v^{k-k_1}\mu(v'_\ast)\,d\om dv_\ast\Bigg\}.
\end{align*}
We now compute $\CI_1$ and $\CI_2$. For $\CI_1$, one directly has
\begin{equation}\label{CI1}
\CI_1\leq Cw_{l}\chi_{M}\na_v^k\mu(v)\|w_{l}f\|_{L^\infty}\int_{\R^3}w_{l}^{-1}dv
\leq Ce^{-\frac{M^2}{16}}\|w_{l}f\|_{L^\infty},
\end{equation}
thanks to the assumption that $M\gg 1$ and $l>\frac{3}{2}$, for instance.

For $\CI_2$,
we first rewrite it as
\begin{align}\notag
\CI_2=w_l\chi_{M}\sum\limits_{k_1\leq k}C_k^{k_1}\int_{\R^3}\int_{\S^2}B^\ast_0\na_v^{k_1}f(v')\na_v^{k-k_1}\mu(v'_\ast)\,d\om dv_\ast,
\end{align}
where $B^\ast_0=\frac{1}{2}(B_0(\cos \ta)+B_0(\sin \ta)).$ As it is shown in \cite[(3.2), pp.397]{AEP-87}, we now resort to the Carleman's representation, i.e.
\begin{equation}\notag
\int_{\R^3}\int_{\S^2}B^\ast_0\na_v^{k_1}f(v')\na_v^{k-k_1}\mu(v'_\ast)\,d\om dv_\ast
=\int_{\R^3}\frac{\na_v^{k-k_1}\mu(v_\ast')}{|v-v'_\ast|^2}\int_{E(v,v_\ast')}\na_v^{k_1}f(v')B^\ast_0 \,d\Pi_{v'}dv_\ast',
\end{equation}
where
$$E(v,v_\ast')=\{v'\big| (v-v')\cdot(v-v_\ast')=0,\ |v-v'|\leq|v-v_\ast'|\}\subset\R^2,$$
and $\Pi_{v'}$ is the Lebesgue measure on the hyperplane $E(v,v_\ast')$. 
Next, we define
\begin{align*}
\chi_1=\chi_1(\xi)=\left\{\begin{array}{rll}
&1,\ |\xi|<\frac{|v|}{\sqrt{2}},\\[2mm]
&0,\ \textrm{otherwise},
\end{array}\right.
\end{align*}
and $\chi_0=1-\chi_1.$ 
We then decompose $\CI_2$ into
\begin{equation*}
\begin{split}
\CI_2=&w_l\chi_{M}\sum\limits_{k_1\leq k}C_k^{k_1}\int_{\R^3}\frac{\na_v^{k-k_1}\mu(v_\ast')
\chi_0(v_\ast')}{|v-v'_\ast|^2}\int_{E(v,v_\ast')}\na_v^{k_1}f(v')\chi_0(v')B^\ast_0 \,d\Pi_{v'}dv_\ast'\\
&+w_l\chi_{M}\sum\limits_{k_1\leq k}C_k^{k_1}\int_{\R^3}\frac{\na_v^{k-k_1}\mu(v_\ast')
\chi_1(v_\ast')}{|v-v'_\ast|^2}\int_{E(v,v_\ast')}\na_v^{k_1}f(v')\chi_0(v')B^\ast_0 \,d\Pi_{v'}dv_\ast'\\
&+w_l\chi_{M}\sum\limits_{k_1\leq k}C_k^{k_1}\int_{\R^3}\frac{\na_v^{k-k_1}\mu(v_\ast')
\chi_0(v_\ast')}{|v-v'_\ast|^2}\int_{E(v,v_\ast')}\na_v^{k_1}f(v')\chi_1(v')B^\ast_0 \,d\Pi_{v'}dv_\ast'\\
\eqdef& \sum\limits_{n=1}^3\CI_{2,n}.
\end{split}
\end{equation*}
Note that the term simultaneously involving $\mu \chi_1$ and $f\chi_1$ has vanished due to the fact that $|v'|^2+|v'_\ast|^2=|v|^2+|v_\ast|^2.$
We now turn to estimate $\CI_{2,n}$$(1\leq n\leq 3)$ term by term. First of all, the direct computation gives us to
\begin{align}\label{WW}
\na_v^{k-k_1}\mu(v_\ast')
\leq C\mu^{\frac{1}{2}}(v_\ast'),\
w_{l}\chi_{|v|\geq M}\mu^{\frac{1}{4}}(v_\ast')
\chi_0(v_\ast')\leq Ce^{-\frac{M^2}{32}}.
\end{align}
Moreover, standard calculation yields
\begin{align}\label{st.ca}
\int_{\R^3}\frac{\mu^{\frac{1}{4}}(v_\ast')}{|v-v'_\ast|^2}\,dv_\ast'\leq C\lag v\rag^{-2}.
\end{align}
By using \eqref{WW} and \eqref{st.ca}, one sees that for $l>\frac{3}{2}$,
\begin{align}\label{CI213}
\CI_{2,1},\ \CI_{2,3}\leq Ce^{-\frac{M^2}{32}}\lag v\rag^{-2}\sum\limits_{k_1\leq k}\|w_l\na_v^{k_1}f\|_{L^\infty}\int_{\R^2}w_{-l}(v')\,d\Pi v'\leq Ce^{-\frac{M^2}{32}}\sum\limits_{k_1\leq k}\|w_l\na_v^{k_1}f\|_{L^\infty}.
\end{align}
It remains now to estimate the delicate term $\CI_{2,2}$ where the smallness is hard to be obtained. As \cite[Proposition 3.1, pp.397]{AEP-87}, we introduce
the following two cutoff functions
\begin{align*}
\chi_\de(v_\ast')=\left\{\begin{array}{rll}
&1,\ |v_\ast'|<\de|v|,\\[2mm]
&0,\ \textrm{otherwise},
\end{array}\right.\quad  \chi_\eta(v_\ast)=\left\{\begin{array}{rll}
&1,\ |v_\ast|<\eta|v|,\\[2mm]
&0,\ \textrm{otherwise},
\end{array}\right.
\end{align*}
where $0<\de<\eta<1.$ Then we spilt $\CI_{2,2}$ as
\begin{equation*}
\begin{split}
\CI_{2,2}=&w_l\chi_{M}\sum\limits_{k_1\leq k}C_k^{k_1}\int_{\R^3}\frac{\na_v^{k-k_1}\mu(v_\ast')
\chi_1(v_\ast')(1-\chi_\de(v_\ast'))}{|v-v'_\ast|^2}\int_{E(v,v_\ast')}\na_v^{k_1}f(v')\chi_0(v')B^\ast_0 \,d\Pi_{v'}dv_\ast'\\
&+w_l\chi_{M}\sum\limits_{k_1\leq k}C_k^{k_1}\int_{\R^3}\frac{\na_v^{k-k_1}\mu(v_\ast')
\chi_1(v_\ast')\chi_\de(v_\ast')}{|v-v'_\ast|^2}\int_{E(v,v_\ast')}\na_v^{k_1}f(v')
\chi_0(v')\chi_\eta(v_\ast)B^\ast_0 \,d\Pi_{v'}dv_\ast'\\
&+w_l\chi_{M}\sum\limits_{k_1\leq k}C_k^{k_1}\int_{\R^3}\frac{\na_v^{k-k_1}\mu(v_\ast')
\chi_1(v_\ast')\chi_{\de}(v_\ast')}{|v-v'_\ast|^2}\\&\qquad\qquad\qquad\qquad\times\int_{E(v,v_\ast')}\na_v^{k_1}f(v')\chi_0(v')(1-\chi_\eta(v_\ast)) B^\ast_0 \,d\Pi_{v'}dv_\ast'\\ \eqdef& \sum\limits_{n=1}^3\CI^n_{2,2}.
\end{split}
\end{equation*}
Performing the similar calculations as for obtaining \eqref{CI213}, one has
\begin{align}\label{CI221}
\CI^1_{2,2}\leq Ce^{-\frac{\de^2M^2}{16}}\sum\limits_{k_1\leq k}\|w_l\na_v^{k_1}f\|_{L^\infty}\int_{\R^2}w_{-l}(v')\,d\Pi v'\leq Ce^{-\frac{\de^2M^2}{16}}\sum\limits_{k_1\leq k}\|w_l\na_v^{k_1}f\|_{L^\infty}.
\end{align}
For $\CI^2_{2,2}$, we first have that if $|v_\ast'|<\de|v|$ and $|v_\ast|<\eta|v|$, then
\begin{align}\notag
|v'-v|=|v_\ast-v_\ast'|\leq(\eta+\de)|v|,\quad |v'|=|v|^2+|v_\ast|^2-|v_\ast'|^2\geq(1-\de^2)|v|^2,
\end{align}
which further implies that the measure of $E(v,v_\ast')$ is bounded as 
$$
|E(v,v_\ast')|\leq \pi(\eta+\de)^2|v|^2\leq 4\pi\eta^2|v|^2,
$$
and it holds true that 
$$
(1+|v'|^2)^{-l}\leq (1+(1-\de^2)|v|^2)^{-l}\leq(1+|v|^2)^{-l}(1-\de^2)^{-l}.
$$
Consequently, applying \eqref{WW} and \eqref{st.ca} again, we obtain
\begin{align}\label{CI222}
\CI^2_{2,2}\leq &C w_l\chi_{M}\sum\limits_{k_1\leq k}\int_{\R^3}\frac{\mu^{\frac{1}{2}}(v_\ast')}{|v-v_\ast'|^2}
\int_{E(v,v_\ast')}\na_v^{k_1}f(v')\chi_0(v')\chi_\eta(v_\ast)\,d\Pi_{v'}dv_\ast'\notag\\
\leq& Cw_l(v)\chi_{M}\lag v\rag^{-2}\sum\limits_{k_1\leq k}\|w_l\na_v^{k_1}f\|_{L^\infty}
\eta^2|v|^2(1+|v|^2)^{-l}(1-\de^2)^{-l}\notag\\
\leq &C\eta^2(1-\de^2)^{-l}\sum\limits_{k_1\leq k}\|w_l\na_v^{k_1}f\|_{L^\infty}.
\end{align}
We are now in a position to compute the last term $\CI^3_{2,2}$. 
Since for the case of $\CI^3_{2,2}$, we have $|v_\ast'|<\de|v|$ and $|v_\ast|\geq\eta|v|$, then it follows that 
\begin{align}\notag
|v'|^2=|v|^2+|v_\ast|^2-|v_\ast'|^2\geq |v|^2+\eta^2|v|^2-\de^2|v|^2=(1+\eta^2-\de^2)|v|^2,
\end{align}
which implies 
\begin{align}\label{CI223}
\CI^3_{2,2}\leq &C w_l\chi_{M}\lag v\rag^{-2}\sum\limits_{k_1\leq k}\|w_l\na_v^{k_1}f\|_{L^\infty}\int^{+\infty}_{|v|\sqrt{1+\eta^2-\de^2}}\frac{r}{(1+r^2)^l}\,dr\notag\\
\leq& C\sum\limits_{k_1\leq k}\|w_l\na_v^{k_1}f\|_{L^\infty}\chi_{M}\lag v\rag^{-2}w_l(v)\frac{1}{l-1}\left(1+(1+\eta^2-\de^2)|v|^2\right)^{-l+1}\notag\\
\leq &\frac{C}{l}\sum\limits_{k_1\leq k}\|w_l\na_v^{k_1}f\|_{L^\infty},
\end{align}
where the last inequality holds due to the fact that $1+\eta^2-\de^2\geq1$ and $l\gg1.$

Therefore, putting \eqref{CI221}, \eqref{CI222} and \eqref{CI223} together, we arrive at
\begin{align}\label{CI22.p2}
\CI_{2,2}\leq C\left\{e^{-\frac{\de^2M^2}{32}}+\eta^2(1-\de^2)^{-l}
+\frac{1}{l}\right\}\sum\limits_{k_1\leq k}\|w_l\na_v^{k_1}f\|_{L^\infty}.
\end{align}
Furthermore, if one chooses $\de=\frac{1}{l}$, $\eta=\frac{1}{\sqrt{l}}$ and $M= l^2\gg1,$
then
\begin{align}\label{CI22.p3}
(1-\de^2)^{-l}<e,\ e^{-\frac{\de^2M^2}{32}}\leq e^{-\frac{M}{32}}.
\end{align}
As a consequence, \eqref{CI22.p2} and \eqref{CI22.p3}
give rise to
\begin{align}\label{CI22.p4}
\CI_{2,2}\leq \left\{\frac{1}{M}+\frac{2}{l}\right\}\sum\limits_{k_1\leq k}\|w_l\na_v^{k_1}f\|_{L^\infty}.
\end{align}
Finally, the desired estimate \eqref{CK1} follows from \eqref{CI1}, \eqref{CI213} and \eqref{CI22.p4}. This ends the proof of Lemma \ref{CK}.
\end{proof}

\section{Steady problem}\label{sta.prob}

This section is devoted to studying the steady problem
\begin{equation}
\label{sp}
-\beta \na_v\cdot (vG)-\al v_2 \pa_{v_1} G=Q(G,G)
\end{equation}
with 
\begin{equation}
\label{def.betac}
\beta=-\frac{\al}{3}\int_{\R^3} v_1v_2G\,dv,
\end{equation}
where the solution $G(v)$ is required to satisfy 
\begin{equation}
\label{cls1}
\int_{\R^3} G \,dv=1,\quad
\int_{\R^3}v_i G \,dv=0,i=1,2,3,\quad
\int_{\R^3} |v|^2 G \,dv=3,
\end{equation}
that are equivalent with the fact that $G$ has the same fluid quantities as $\mu$ in \eqref{def.mu} for any $\al>0$. Note that through the paper we have omitted the dependence of $G$ on the parameter $\al$. 

Since one expects $G\to \mu$ as $\al\to 0$,
to look for the solution, let us first set
\begin{equation}\label{Gexp}
G=\mu+\al\sqrt{\mu}\left\{G_1+G_R\right\},
\end{equation}
with $\FP_0G_1=0$ and $\FP_0G_R=0 $ such that \eqref{cls1} holds true, i.e.
\begin{eqnarray}\label{con.g1r}
\left\{\begin{array}{rll}
\begin{split}
&\int_{\R^3}G_1\sqrt{\mu}\,dv=\int_{\R^3}G_R\sqrt{\mu}\,dv=0,\\
&\int_{\R^3}G_1v_i\sqrt{\mu}\,dv=\int_{\R^3}G_Rv_i\sqrt{\mu}\,dv=0,\ i=1,2,3,\\
&\int_{\R^3}G_1|v|^2\sqrt{\mu}\,dv=\int_{\R^3}G_R|v|^2\sqrt{\mu}\,dv=0,
\end{split}
\end{array}\right.
\end{eqnarray}
where $G_1$ accounts for the first order correction and $G_R$ denotes the higher order remainder.  
We now turn to determine $G_1$ and derive the equation of the remainder $G_R$. In fact, plugging \eqref{Gexp} into  \eqref{def.betac} gives
\begin{equation}\label{beta.sum}
\beta=-\frac{\al}{3}\int_{\R^3} v_1v_2 G\,dv=-\frac{\al^2}{3}\int_{\R^3} v_1v_2\mu^{1/2} (G_1+G_R)\,dv,
\end{equation}
which implies that $\beta$ is at least the second order of $\al$. Therefore, substituting \eqref{Gexp} into \eqref{sp}, one can write
\begin{align}
-\frac{\bet}{\al}&\mu^{-\frac{1}{2}}\na_v\cdot(v\mu)-\beta\mu^{-\frac{1}{2}}\na_v\cdot(v\mu^{\frac{1}{2}}(G_1+G_R))
+v_1v_2\mu^{\frac{1}{2}}-\al\mu^{\frac{1}{2}}v_2\pa_{v_1}(\mu^{\frac{1}{2}}(G_1+G_R))\notag
\\&+LG_1+LG_R
=\al\Ga(G_1,G_1)+\al\{\Ga(G_R,G_1)+\Ga(G_1,G_R)\}+\al\Ga(G_R,G_R).\label{gr0}
\end{align}
To remove the zero order term from \eqref{gr0}, we set 
\begin{align*}
G_1=-L^{-1}\{v_1v_2\mu^{\frac{1}{2}}\},
\end{align*}
where we have noticed $v_1v_2\mu^{\frac{1}{2}}\in (\ker L)^\perp$ so that $G_1$ is well-defined and $G_1\in (\ker L)^\perp$ is purely microscopic, satisfying \eqref{con.g1r}. Moreover, it follows from Lemma \ref{our.lem} that 
\begin{align}\label{G1}
G_1=-\frac{1}{2b_0}v_1v_2\mu^{\frac{1}{2}}
\end{align}
with the constant $b_0>0$ defined in \eqref{b0}. Then, \eqref{gr0} is further reduced to
\begin{align}
\beta\mu^{-\frac{1}{2}}&\na_v\cdot(v\mu^{\frac{1}{2}}G_R)
-\al\mu^{-\frac{1}{2}}v_2\pa_{v_1}(\mu^{\frac{1}{2}}G_R)
+LG_R\notag\\
=&\frac{\bet}{\al}\mu^{-\frac{1}{2}}\na_v\cdot(v\mu)+\beta\mu^{-\frac{1}{2}}\na_v\cdot(v\mu^{\frac{1}{2}}G_1)+
\al\mu^{-\frac{1}{2}}v_2\pa_{v_1}(\mu^{\frac{1}{2}}G_1)\notag\\&+
\al\Ga(G_1,G_1)+\al\{\Ga(G_R,G_1)+\Ga(G_1,G_R)\}+\al\Ga(G_R,G_R),\label{gr}
\end{align}
and in light of \eqref{G1}, $\beta$ in \eqref{beta.sum} is given as \begin{equation}\label{def.beta0}
\beta=\beta^0 -\frac{\al^2}{3}\int_{\R^3} v_1v_2\mu^{1/2}G_R\,dv,
\end{equation}
where for later use we have denoted 
$$
\beta^0=-\frac{\al^2}{3}\int_{\R^3} v_1v_2\mu^{1/2}G_1\,dv=\frac{\al^2}{6b_0}>0.
$$
To solve \eqref{gr} on $G_R$, it is necessary to use the decomposition
\begin{align*}
\sqrt{\mu}G_R=G_{R,1}+\sqrt{\mu}G_{R,2},
\end{align*}
where 
$G_{R,1}$ and $G_{R,2}$ are supposed to satisfy
\begin{align}
-\beta\na_v\cdot(vG_{R,1})&
-\al v_2\pa_{v_1}G_{R,1}+\frac{\beta}{2}|v|^2\mu^{\frac{1}{2}}G_{R,2}+\al\frac{v_1v_2}{2}\mu^{\frac{1}{2}}G_{R,2}
+\nu_0G_R-\chi_{M}\CK G_{R,1}\notag\\
=&\frac{\bet}{\al}\na_v\cdot(v\mu)+\beta\na_v\cdot(v\mu^{\frac{1}{2}}G_1)
+\al v_2\pa_{v_1}(\mu^{\frac{1}{2}}G_1)+
\al Q(\mu^{\frac{1}{2}}G_1,\mu^{\frac{1}{2}}G_1)\notag\\&+\al\{Q(\mu^{\frac{1}{2}}G_R,\mu^{\frac{1}{2}}G_1)
+Q(\mu^{\frac{1}{2}}G_1,\mu^{\frac{1}{2}}G_R)\}+\al Q(\mu^{\frac{1}{2}}G_R,\mu^{\frac{1}{2}}G_R),\label{gr1}
\end{align}
and
\begin{align}\label{gr2}
-\beta&\na_v\cdot(vG_{R,2})
-\al v_2\pa_{v_1}G_{R,2}
+LG_{R,2}-(1-\chi_{M})\mu^{-\frac{1}{2}}\CK G_{R,1}=0,
\end{align}
respectively. Here, we recall that $\nu_0$ and $\CK$ are defined in \eqref{sp.cL}. Moreover, in order for $G_R$ to satisfy \eqref{con.g1r}, we require
\begin{eqnarray}\label{con.g12}
\left\{\begin{array}{rll}
\begin{split}
&\int_{\R^3}G_{R,1}\,dv+\int_{\R^3}\sqrt{\mu}G_{R,2}\,dv=0,\\
&\int_{\R^3}v_iG_{R,1}\,dv+\int_{\R^3}v_i\sqrt{\mu}G_{R,2}\,dv=0,\ i=1,2,3,\\
&\int_{\R^3}|v|^2G_{R,1}\,dv+\int_{\R^3}|v|^2\sqrt{\mu}G_{R,2}\,dv=0.
\end{split}
\end{array}\right.
\end{eqnarray}

The existence of \eqref{gr1} and \eqref{gr2} under the restriction condition \eqref{con.g12} will be established through the approximation solution sequence by iteratively solving the following equations
\begin{eqnarray}\label{gr12.ls}
\left\{\begin{array}{rll}
\begin{split}
\eps G^{n+1}_{R,1}&-\beta^{n}\na_v\cdot (vG^{n+1}_{R,1})-\al v_2\pa_{v_1}G^{n+1}_{R,1}
+\nu_0G^{n+1}_{R,1}
-\chi_{M}\CK G^{n+1}_{R,1}\\&+\frac{\beta^n}{2}|v|^2\mu^{\frac{1}{2}}G^{n+1}_{R,2}+\al\frac{v_1v_2}{2}\mu^{\frac{1}{2}}G^{n+1}_{R,2}
-\frac{\bet^{n+1}-\frac{\al^2}{6b_0}}{\al}\na_v\cdot(v\mu)\\=&\beta^n\na_v\cdot(v\mu^{\frac{1}{2}}G_1)
+\frac{\al}{6b_0}\na_v\cdot(v\mu)+\al v_2\pa_{v_1}(\mu^{\frac{1}{2}}G_1)+
\al Q(\mu^{\frac{1}{2}}G_1,\mu^{\frac{1}{2}}G_1)\\&+\al\{Q(\mu^{\frac{1}{2}}G^{n}_R,\mu^{\frac{1}{2}}G_1)
+Q(\mu^{\frac{1}{2}}G_1,\mu^{\frac{1}{2}}G^{n}_R)\}+\al Q(\mu^{\frac{1}{2}}G^n_R,\mu^{\frac{1}{2}}G^n_R),\\
\eps G^{n+1}_{R,2}&-\beta^n\na_v\cdot (v \na_vG^{n+1}_{R,2})-\al v_2\pa_{v_1}G^{n+1}_{R,2}
+LG^{n+1}_{R,2}-(1-\chi_{M})\mu^{-\frac{1}{2}}\CK G^{n+1}_{R,1}=0,
\end{split}
\end{array}\right.
\end{eqnarray}
for a small parameter $\eps>0$, where we have denoted 
\begin{align}\label{n-sq}
\mu^{\frac{1}{2}}G^{n}_R=G^{n}_{R,1}+\mu^{\frac{1}{2}}G^{n}_{R,2},\quad \beta^n=\beta^0-\frac{\al^2}{3}\int_{\R^3} v_1v_2(G^{n}_{R,1}+\mu^{\frac{1}{2}}G^{n}_{R,2})\,dv,\quad n\geq0,
\end{align}
the constant $\beta^0$ is defined in \eqref{def.beta0}, and we set 
$$
G^{0}_{R,1}=G^{0}_{R,2}=0.
$$
For brevity we have omitted the explicit dependence of the approximation solution sequence $\{[G_{R,1}^n,G_{R,2}^n]\}_{n=1}^\infty$ on $\eps$. Note that we put the penalty terms $\eps G^{n+1}_{R,i}$$(i=1,2)$ on the left hand side of \eqref{gr12.ls} so as to guarantee the mass conservation in \eqref{con.g1r}.  In addition, since it holds that
\begin{align}
\lag \frac{\al}{6b_0}\na_v\cdot(v\mu)+\al v_2\pa_{v_1}(\mu^{\frac{1}{2}}G_1),|v|^2\mu^{\frac{1}{2}}\rag=0,\notag
\end{align}
and
\begin{align}
\lag \frac{\bet^{n+1}-\frac{\al^2}{6b_0}}{\al}\na_v\cdot(v\mu)+\al v_2\pa_{v_1}G^{n+1}_{R,1},|v|^2\rag
+\lag \al v_2\pa_{v_1}G^{n+1}_{R,2},|v|^2\mu^{\frac{1}{2}}\rag=0,\notag
\end{align}
one sees
that
\begin{align}\label{abr12n}
\lag G^{n+1}_{R,1},[1,v_i,|v|^2]\rag+\lag G^{n+1}_{R,2},[1,v_i,|v|^2]\mu^{\frac{1}{2}}\rag=0,\ i=1,2,3,
\end{align}
for any $\eps>0.$

We first show that in an appropriate function space there exists a solution $[G_{R,1},G_{R,2}]$ satisfying
\begin{align}\label{abr12}
\lag G_{R,1},[1,v_i,|v|^2]\rag+\lag G_{R,2},[1,v_i,|v|^2]\mu^{\frac{1}{2}}\rag=0,\ i=1,2,3
\end{align}
to the coupled linear system corresponding to \eqref{gr12.ls}.
To do so, let us first define the following linear operator parameterized by $\si\in[0,1]$ (cf.~\cite{DHWZ-19}):
$$
\mathscr{L}_\si[\CG_1,\CG_2]=[\mathscr{L}^1_\si,\mathscr{L}^2_\si][\CG_1,\CG_2],
$$
where
\begin{eqnarray*}
\left\{\begin{array}{rll}
\begin{split}
\mathscr{L}^1_{\si}[\CG_1,\CG_2]=&\eps\CG_1-\beta'\na_v\cdot (v\CG_1)-\al v_2\pa_{v_1}\CG_1
+\nu_0\CG_1
-\si\chi_{M}\CK\CG_1\\
&+\frac{\beta'}{2}|v|^2\sqrt{\mu}\CG_2
+\al\frac{v_1v_2}{2}\sqrt{\mu}\CG_2-\frac{\bet''(\CG)}{\al}\na_v\cdot(v\mu),\\
\mathscr{L}^2_\si[\CG_1,\CG_2]=&\eps\CG_2-\beta'\na_v\cdot (v\CG_2)-\al v_2\pa_{v_1}\CG_2
+\nu_0\CG_2-\si K\CG_2-\si(1-\chi_{M})\mu^{-\frac{1}{2}}\CK \CG_1.
\end{split}
\end{array}\right.
\end{eqnarray*}
Here $K$ is defined as \eqref{sp.L}, $\beta'$ is a given constant satisfying $\beta'\sim\al^2$, and
\begin{equation}\label{def.betaG}
\beta''(\CG)=-\frac{\al^2}{3}\int_{\R^3} v_1v_2(\CG_1+\mu^{\frac{1}{2}}\CG_2)\,dv.
\end{equation}
Then we consider the solvability of the general coupled linear system
\begin{align}\label{pals}
\left\{\begin{array}{rll}
&\mathscr{L}^1_{\si}[\CG_1,\CG_2]=\CF_1,\\[2mm]
&\mathscr{L}^2_\si[\CG_1,\CG_2]=\CF_2,
 \end{array}\right.
\end{align}
where $\CF_1$ and $\CF_2$ are given sources satisfying
\begin{eqnarray}
\left\{\begin{array}{rll}
 &\lag \CF_1,[1,v_i,|v|^2]\rag+\lag \CF_2,[1,v_i,|v|^2]\mu^{\frac{1}{2}}\rag=0,\ i=1,2,3,\\[3mm]
&\|w_{l}\na_v^k\CF_1\|_{L^\infty}+\|w_{l}\na_v^k\CF_2\|_{L^\infty}<+\infty,\ \textrm{for any}\ k\geq0.\label{F12}
 \end{array}\right.
\end{eqnarray}

In what follows, we look for solutions to the system \eqref{pals} in the Banach space
\begin{align*}
\FX_{\al,m}=\bigg\{[\CG_1,\CG_2]&\big|\sum\limits_{0\leq k\leq m}\|w_{l}\na_v^k[\CG_1,\CG_2]\|_{L^\infty}<+\infty,
\\
&\quad\lag \CG_1,[1,v_i,|v|^2]\rag+\lag \CG_2,[1,v_i,|v|^2]\mu^{\frac{1}{2}}\rag=0,\ i=1,2,3
\bigg\},\notag
\end{align*}
associated with the norm
$$
\|[\CG_1,\CG_2]\|_{\FX_{\al},m}=\sum\limits_{0\leq k\leq m}\left\{\|w_{l}\na_v^k\CG_1\|_{L^\infty}+\|w_{l}\na_v^k\CG_2\|_{L^\infty}\right\}.
$$
Let us now deduce the {\it a priori} estimate for the parameterized linear system \eqref{pals}.

\begin{lemma}[{\it a priori} estimate]\label{lifpri}
Let $[\CG_1,\CG_2]\in\FX_{\al,m}$ with $\al>0$ and $m\geq 0$ be a solution to \eqref{pals} with $\eps>0$ suitably small, $\si\in[0,1]$ and $[\CF_1,\CF_2]$ satisfying \eqref{F12}. There is $l_0>0$ such that for any $l\geq l_0$ arbitrarily large, there are $\al_0=\al_0(l)>0$ and large $M=M(l)>0$ such that for any $0<\al<\al_0$,  the solution $[\CG_1,\CG_2]$ satisfies the following estimate 
\begin{align}\label{Lif.es1}
\|[\CG_1,\CG_2]\|_{\FX_{\al,m}}=\|\mathscr{L}_\si^{-1}[\CF_1,\CF_2]\|_{\FX_{\al,m}}\leq C_\mathscr{L}\sum\limits_{0\leq k\leq m}\left\{\|w_{l}\na_v^k\CF_1\|_{L^\infty}+\|w_{l}\na_v^k\CF_2\|_{L^\infty}\right\},
\end{align}
where the constant $C_\mathscr{L}>0$ is independent of $\si$, $\eps$ and $\al$.
\end{lemma}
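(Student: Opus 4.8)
The plan is to derive the a priori bound \eqref{Lif.es1} by a weighted $L^\infty$ estimate carried out directly on the two coupled equations in \eqref{pals}, exploiting the structure that the troublesome velocity-growth term $\frac{\al}{2}v_1v_2\sqrt\mu\,\CG_2$ sits only in the first (exponential-weight-free) equation, while the integral operators $\chi_M\CK$ and $(1-\chi_M)\mu^{-1/2}\CK$ are small thanks to Proposition \ref{CK}. I would proceed order by order in the number of velocity derivatives $k$, from $k=0$ up to $k=m$, and at each order split the argument according to whether $|v|\geq M$ or $|v|\leq M$.

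First I would treat the first equation $\mathscr L^1_\si[\CG_1,\CG_2]=\CF_1$. Rewrite it as
\begin{align}\notag
\eps\CG_1-\beta'\na_v\cdot(v\CG_1)-\al v_2\pa_{v_1}\CG_1+\nu_0\CG_1=\si\chi_M\CK\CG_1-\frac{\beta'}{2}|v|^2\sqrt\mu\,\CG_2-\al\frac{v_1v_2}{2}\sqrt\mu\,\CG_2+\frac{\beta''(\CG)}{\al}\na_v\cdot(v\mu)+\CF_1.
\end{align}
Multiply by $w_l$ and commute the weight through the transport term, exactly as in \eqref{sopagr1}: the term $-\al v_2\pa_{v_1}\CG_1$ produces, besides the transport term $-\al v_2\pa_{v_1}(w_l\CG_1)$, the amplification $2\al l\frac{v_1v_2}{1+|v|^2}w_l\CG_1$, and the damping term $\nu_0+\eps-3\beta'$ combined with $\beta'\sim\al^2$ and $|2\al l \tfrac{v_1v_2}{1+|v|^2}|\le \al l$ gives a net coercive constant $\ge \tfrac12\nu_0$ provided $\al\le\al_0(l)\sim 1/l$ and $\eps$ is small. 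Integrating along the characteristics of the vector field $\beta' v+\al v_2 e_1$ (whose divergence is the harmless constant $3\beta'$), $w_l\CG_1$ is bounded by $\tfrac{2}{\nu_0}$ times the $L^\infty$ norm of the weighted right-hand side. On the right, $\|w_l\si\chi_M\CK\CG_1\|_{L^\infty}\le \tfrac{C}{l}\sum_{k'\le k}\|w_l\na_v^{k'}\CG_1\|_{L^\infty}$ by Proposition \ref{CK}; the two $\sqrt\mu\,\CG_2$ terms are absorbed since $w_l|v_1v_2|\sqrt\mu\le C$, giving $C\al\|w_l\CG_2\|_{L^\infty}$; $\beta''(\CG)/\al\le C\al(\|w_l\CG_1\|_{L^\infty}+\|w_l\CG_2\|_{L^\infty})$ by \eqref{def.betaG} and $l>3/2$; and $\CF_1$ is kept. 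For higher $k$, differentiate the equation $k$ times: the extra commutator terms from $\na_v\cdot(v\,\cdot)$ and $v_2\pa_{v_1}$ are lower order in $k$ and carry a factor $\al$ or $\beta'$, hence are absorbable, and Proposition \ref{CK} still applies to all $\na_v^{k'}\CK$, $k'\le k$.

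Next I would treat the second equation $\mathscr L^2_\si[\CG_1,\CG_2]=\CF_2$, which now has the good self-adjoint structure $LG_{R,2}=\nu_0\CG_2-K\CG_2$ and no velocity growth. Here I would use the standard $L^\infty$-$L^2$ scheme of Guo \cite{Guo-2010}: the weighted $L^\infty$ estimate along characteristics, with the kernel bound of Lemma \ref{Kop} and the mild $|v|^{-1}$ gain, reduces the bound on $\|w_l\na_v^k\CG_2\|_{L^\infty}$ to the $L^2$ norm $\|\na_v^k\CG_2\|$ (plus the contribution $(1-\chi_M)\mu^{-1/2}\CK\CG_1$, which by Proposition \ref{CK} is $\le\tfrac{C}{l}\sum_{k'\le k}\|w_l\na_v^{k'}\CG_1\|_{L^\infty}$, plus $\CF_2$). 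The $L^2$ norm is controlled from the energy estimate: taking the inner product of the second equation with $\CG_2$, Lemma \ref{es.L} gives $\de_0\|\FP_1\CG_2\|^2$ from $L$; the transport terms $\beta'\na_v\cdot(v\CG_2)$ and $\al v_2\pa_{v_1}\CG_2$ integrate to lower-order ($\al$- or $\beta'$-small) contributions after integration by parts; and the macroscopic part $\FP_0\CG_2$ is recovered from the linear constraint \eqref{abr12} together with the fact that $\FP_0\CG_1$ couples back to $\CG_1$ with coefficients that are $O(1)$ but multiply quantities already controlled; the source term $(1-\chi_M)\mu^{-1/2}\CK\CG_1$ contributes $\tfrac{C}{l}\|w_l\CG_1\|_{L^\infty}$. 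The derivative $L^2$ estimates follow by the weighted version of Lemma \ref{es.L}.

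Finally I would close the loop: collecting the two chains of inequalities, one obtains
\begin{align}\notag
\|[\CG_1,\CG_2]\|_{\FX_{\al,m}}\le \Big(C\al+\frac{C}{l}\Big)\|[\CG_1,\CG_2]\|_{\FX_{\al,m}}+C\sum_{0\le k\le m}\big\{\|w_l\na_v^k\CF_1\|_{L^\infty}+\|w_l\na_v^k\CF_2\|_{L^\infty}\big\},
\end{align}
and choosing first $l\ge l_0$ large and then $\al_0(l)$ small enough so that $C\al+C/l\le\tfrac12$, the self-improving term is absorbed into the left, yielding \eqref{Lif.es1} with $C_\mathscr L$ independent of $\si$, $\eps$, $\al$. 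The main obstacle I expect is the bookkeeping in the $L^2$ energy estimate for $\CG_2$: because the system is inhomogeneous and the two constraints in \eqref{abr12} mix $\CG_1$ and $\CG_2$, recovering the macroscopic component $\FP_0\CG_2$ without generating a term of size $O(1)\|\CG_2\|$ requires care — one must use the constraint to express $\FP_0\CG_2$ in terms of $\FP_0\CG_1$ (itself $O(\al)$-small through the $\beta''$-type couplings and the structure of $\CF_1,\CF_2$) and then feed back, rather than trying to bound it directly; threading the dependence on $\eps$ so that the final constant is $\eps$-independent is the other delicate point, handled by keeping the $\eps\CG_i$ terms on the coercive side throughout.
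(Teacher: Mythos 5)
Your proposal follows essentially the same route as the paper's proof: weighted $L^\infty$ estimates along the characteristics of $\beta'v+\al v_2e_1$ with Proposition \ref{CK} supplying the $C/l$ smallness for $\chi_M\CK$ and the condition $\al l$ small for the $\nu_0/2$ coercivity; Guo's $L^\infty$--$L^2$ reduction for the $K$-term in the second equation; and an $L^2$ energy estimate for $\CG_2$ whose macroscopic part is recovered from the constraint \eqref{abr12} as $\FP_0\CG_2=-(a_1+\Fb_1\cdot v+c_1(|v|^2-3))\sqrt{\mu}$ with $|[a_1,\Fb_1,c_1]|\lesssim\|w_l\CG_1\|_{L^\infty}$, exactly as in the paper's \eqref{abR}--\eqref{wG2}. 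One caveat: the term $(1-\chi_M)\mu^{-1/2}\CK\CG_1$ is supported in $\{|v|\leq M+1\}$, where Proposition \ref{CK} gives no smallness, so its contribution to the $\CG_2$-estimate is only $O(1)\,\|w_l\CG_1\|_{L^\infty}$ rather than $C/l$; consequently your final display with prefactor $C\al+C/l$ on the full norm is not obtained by naive addition, and the closure must instead take a weighted combination of the two chains (small weight on the $\CG_2$-bound), which works because the reverse coupling $\CG_2\to\CG_1$ carries the factor $\al$ --- this is the ``adjusting constants'' step in the paper.
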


\begin{proof}
The proof is divided into two steps.

\medskip
\noindent\underline{Step 1. $L^\infty$ estimates.}  Taking $0\leq k\leq m$ and $l$, we set $H_{1,k}=w_{l}\na_v^k\CG_1$ and $H_{2,k}=w_{l}\na_v^k\CG_2$. Then, $\FH_k=[H_{1,k},H_{2,k}]$ satisfies the following equations:
\begin{align}
\eps H_{1,k}&-\bet' \na_v\cdot(v H_{1,k})+2l \bet' \frac{|v|^2}{1+|v|^2} H_{1,k}
-\al v_2\pa_{v_1}H_{1,k}+2l \al \frac{v_2v_1}{{1+|v|^2}}H_{1,k}
+\nu_0H_{1,k}\notag\\
&-\si\chi_{M}w_l\CK \left(\frac{H_{1,k}}{w_{l}}\right)
-w_l\frac{\bet''(\frac{\FH_{0}}{w_l})}{\al}\na_v^k\na_v\cdot(v\mu)\notag\\
= &{\bf1}_{|\ga'|=1}w_{l}\bet' C_\ga^{\ga'}\na_v\cdot(\pa^{\ga'}v\pa_v^{\ga-\ga'}\CG_1)
+{\bf1}_{\ga'=(0,1,0)}\al C_\ga^{\ga'}w_l\pa_{v_1}\pa_v^{\ga-\ga'}\CG_1
\notag\\&-\frac{\beta'}{2}w_l\sum\limits_{\ga'\leq\ga}C_\ga^{\ga'}\pa_v^{\ga'}(|v|^2\mu^{\frac{1}{2}})\pa_v^{\ga-\ga'}\CG_2
-\frac{\al}{2}\sum\limits_{\ga'\leq\ga}w_{l} C_\ga^{\ga'}\pa_v^{\ga'}\left(v_1v_2\sqrt{\mu}\right)\pa_v^{\ga-\ga'}\CG_2
\notag\\&+\si\sum\limits_{0<\ga'\leq\ga}C_\ga^{\ga'}w_{l}(\pa_v^{\ga'}(\chi_{M}\CK)) \left(\pa_v^{\ga-\ga'}\CG_1\right)+w_{l}\na_v^k\CF_1,\label{H10}
\end{align}
and
\begin{align}
\eps H_{2,k}&-\bet' \na_v\cdot(v H_{2,k})+2l \bet' \frac{|v|^2}{1+|v|^2} H_{2,k}
-\al v_2\pa_{v_1}H_{2,k}+2l \al \frac{v_2v_1}{{1+|v|^2}}H_{2,k}
\notag\\
&+\nu_0 H_{2,k}-\si w_{l}K\left(\frac{H_{2,k}}{w_{l}}\right)
\notag\\=&{\bf1}_{|\ga'|=1}w_{l}\bet' C_\ga^{\ga'}\na_v\cdot(\pa^{\ga'}v\pa_v^{\ga-\ga'}\CG_2)
+{\bf1}_{\ga'=(0,1,0)}\al C_\ga^{\ga'}w_{l}\pa_{v_1}\pa_v^{\ga-\ga'}\CG_2
\notag\\&+\si w_{l}\sum\limits_{0<\ga'\leq\ga}C_\ga^{\ga'}(\pa^{\ga'}K)\left(\pa^{\ga-\ga'}_v\CG_2\right)\notag\\
&+\si\sum\limits_{\ga'\leq\ga}C_\ga^{\ga'}w_{l}\pa_v^{\ga'}((1-\chi_{M})\mu^{-\frac{1}{2}}\CK) \left(\pa_v^{\ga-\ga'}\CG_1\right)+w_{l}\na_v^k\CF_2,\label{H20}
\end{align}
where $\FH_0\eqdef[H_1,H_2]=[H_{1,0},H_{2,0}]=w_{l}[\CG_1,\CG_2].$
The characteristic method will be employed to construct the existence of solutions to \eqref{H10} and \eqref{H20} in $L^\infty$ space (cf.~\cite{EGKM-13}). To do so, we first introduce a uniform parameter $t$, and regard $H_{i,k}(v)=H_{i,k}(t,v)$$(i=1,2)$, then define the characteristic line $[s,V(s;t,v)]$ for both the equations \eqref{H10} and \eqref{H20} going through $(s,v)$ such that
\begin{align}\label{H1CL}
\left\{\begin{array}{rll}
&\frac{d V_1}{ds}=-\beta' V_1(s;t,v)-\al V_2(s;t,v),\\[2mm]
&\frac{d V_i}{ds}=-\beta' V_i(s;t,v),\ i=2,3,\\[2mm]
&V(t;t,v)=v,
\end{array}\right.
\end{align}
which is equivalent to
\begin{equation*}
\begin{split}
&V_1(s;t,v)=e^{\beta'(t-s)}(v_1+\al v_2(t-s)),\\[2mm]
&V_i(s;t,v)=e^{\beta'(t-s)}v_i,\ \ i=2,3.
\end{split}
\end{equation*}
Integrating along the backward trajectory \eqref{H1CL}, one can write the solutions of \eqref{H10} and \eqref{H20} as the mild form of
\begin{align}\label{H10m}
H_{1,k}
=&e^{-\int_{0}^t\CA^\eps(\tau,V(\tau))d\tau}H_{1,k}(V(0))+\si\int_0^{t}e^{-\int_{s}^t\CA^\eps(\tau,V(\tau))d\tau}\left\{\chi_{M}w_l\CK
\left(\frac{H_{1,k}}{w_{l}}\right)\right\}(V(s))\,ds\notag
\\&-\int_0^{t}e^{-\int_{s}^t\CA^\eps(\tau,V(\tau))d\tau}
\left\{w_l\frac{\bet''(\frac{\FH_0}{w_l})}{\al}\na_v^k\na_v\cdot(v\mu)\right\}(V(s))\,ds
\notag\\
&+\int_0^{t}e^{-\int_{s}^t\CA^\eps(\tau,V(\tau))d\tau}
\bigg\{{\bf1}_{|\ga'|=1}w_{l}\bet' C_\ga^{\ga'}\na_v\cdot(\pa^{\ga'}v\pa_v^{\ga-\ga'}\CG_1)
\notag\\&\qquad\qquad
+{\bf1}_{\ga'=(0,1,0)}\al C_\ga^{\ga'}w_l\pa_{v_1}\pa_v^{\ga-\ga'}\CG_1\bigg\}(V(s))\,ds
\notag\\
&-\int_0^{t}e^{-\int_{s}^t\CA^\eps(\tau,V(\tau))d\tau}\bigg\{\frac{\beta'}{2}
w_l\sum\limits_{\ga'\leq\ga}C_\ga^{\ga'}\pa_v^{\ga'}(|v|^2\mu^{\frac{1}{2}})\pa_v^{\ga-\ga'}\CG_2
\notag\\&\qquad\qquad+\frac{\al}{2}\sum\limits_{\ga'\leq\ga}w_{l} C_\ga^{\ga'}\pa_v^{\ga'}\left(v_1v_2\sqrt{\mu}\right)\pa_v^{\ga-\ga'}\CG_2\bigg\}(V(s))\,ds\notag\\
&+\si\int_0^{t}e^{-\int_{s}^t\CA^\eps(\tau,V(\tau))d\tau}\left\{{\bf1}_{|\ga|\geq1}\sum\limits_{0<\ga'\leq\ga}C_\ga^{\ga'}w_{l}(\pa_v^{\ga'}(\chi_{M}\CK)) \left(\pa_v^{\ga-\ga'}\CG_1\right)\right\}(V(s))\,ds\notag\\
&+\int_0^{t}e^{-\int_{s}^t\CA^\eps(\tau,V(\tau))d\tau}\left(w_{l}\na_v^k\CF_1\right)(V(s))\,ds\eqdef \sum\limits_{i=1}^7\CI_{i},
\end{align}
and
\begin{align}\label{H20m}
H_{2,k}
=&e^{-\int_{0}^t\CA^\eps(\tau,V(\tau))d\tau}H_{2,k}(V(0))+\si\int_0^{t}e^{-\int_{s}^t\CA^\eps(\tau,V(\tau))d\tau}\left[w_{l}K\left(\frac{H_{2,k}}{w_{l}}\right)\right](V(s))\,ds
\notag\\&
+\int_0^{t}e^{-\int_{s}^t\CA^\eps(\tau,V(\tau))d\tau}\bigg\{{\bf1}_{|\ga'|=1}w_{l}\bet' C_\ga^{\ga'}\na_v\cdot(\pa^{\ga'}v\pa_v^{\ga-\ga'}\CG_2)
\notag\\&\qquad\qquad+{\bf1}_{\ga'=(0,1,0)}\al C_\ga^{\ga'}w_{l}\pa_{v_1}\pa_v^{\ga-\ga'}\CG_2\bigg\}(V(s))\,ds\notag\\&
+\si\int_0^{t}e^{-\int_{s}^t\CA^\eps(\tau,V(\tau))d\tau}\left\{{\bf1}_{|\ga|\geq1}
w_{l}\sum\limits_{0<\ga'\leq\ga}C_\ga^{\ga'}(\pa^{\ga'}K)\left(\pa^{\ga-\ga'}_v\CG_2\right)\right\}(V(s))\,ds
\notag\\&+\si\int_0^{t}e^{-\int_{s}^t\CA^\eps(\tau,V(\tau))d\tau}
\left\{w_{l}\sum\limits_{\ga'\leq\ga}C_\ga^{\ga'}\pa_v^{\ga'}
((1-\chi_{M})\mu^{-\frac{1}{2}}\CK)\left(\pa_v^{\ga-\ga'}\CG_1\right)\right\}(V(s))\,ds \notag\\&
+\int_0^{t}e^{-\int_{s}^t\CA^\eps(\tau,V(\tau))d\tau}\left(w_{l}\na_v^k\CF_2\right)(V(s))\,ds
\eqdef \sum\limits_{i=8}^{13}\CI_{i},
\end{align}
where
\begin{align}\label{sglw}
\CA^\eps(\tau,V(\tau))=\nu_0+\eps-3\beta'+2l \bet' \frac{|V(\tau)|^2}{1+|V(\tau)|^2} +2l \al \frac{V_2(\tau)V_1(\tau)}{{1+|V(\tau)|^2}}\geq\nu_0/2,
\end{align}
provided that $\eps>0$, $\beta'\sim \al^2$, $l\beta'\sim l\al^2$, and $l\al $ is suitably small.
In what follows, we will estimate $\CI_i$$(1\leq i\leq13)$ term by term. 

Since the parameter $t$ here is arbitrary, we may take $t$
sufficiently large such that
\begin{align*}
e^{-\int_{0}^t\CA^\eps(\tau,V(\tau))\,d\tau}\leq e^{-\frac{\nu_0t}{2}}\leq\frac{1}{8},
\end{align*}
from which, one sees that
\begin{align}\notag
\CI_1\leq \frac{1}{8}\|H_{1,k}\|_{L^\infty},\ \CI_8\leq \frac{1}{8}\|H_{2,k}\|_{L^\infty}.
\end{align}
Next, Lemma \ref{CK} and \eqref{sglw} give that
\begin{align}\notag
\CI_2\leq \frac{C}{l }\|H_{1,k}\|_{L^\infty}\int_{0}^te^{-\frac{\nu_0}{2}(t-s)}\,ds\leq \frac{C}{l }\|H_{1,k}\|_{L^\infty}.
\end{align}
In view of \eqref{def.betaG}, one has
\begin{align}\notag
\CI_3\leq C\al\|H_{1,0}\|_{L^\infty}+C\al\|H_{2,0}\|_{L^\infty}.
\end{align}
It is straightforward to see that
\begin{align}\notag
 \CI_4\leq C\al\sum\limits_{k'\leq k}\|H_{1,k'}\|_{L^\infty},\ \CI_5,\CI_{10}\leq C\al\sum\limits_{k'\leq k}\|H_{2,k'}\|_{L^\infty}.
\end{align}
For $\CI_6$, we first rewrite $\pa_v^{\ga'}(\chi_{M}\CK)(\pa_v^{\ga-\ga'}\CG_1)$ as
\begin{align*}
\pa_v^{\ga'}(\chi_{M}\CK) (\pa_v^{\ga-\ga'}\CG_1)=&\sum\limits_{\ga''\leq\ga'}C_{\ga'}^{\ga''}\pa_v^{\ga'-\ga''}\chi_{M}\pa_v^{\ga''}\CK (\pa_v^{\ga-\ga'}\CG_1)\notag\\
=&\sum\limits_{\ga''\leq\ga'}C_{\ga'}^{\ga''}\pa_v^{\ga'-\ga''}\chi_{M}\left\{Q (\pa_v^{\ga''}\mu,\pa_v^{\ga-\ga'}\CG_1)
+Q (\pa_v^{\ga-\ga'}\CG_1,\pa_v^{\ga''}\mu)\right\},
\end{align*}
then one sees that
\begin{align}\notag
\CI_6{\bf1}_{k\geq1}\leq C\sum\limits_{k'<k}\|H_{1,k'}\|_{L^\infty},
\end{align}
according to Lemma \ref{op.es.lem}. And likewise, we also have
\begin{align}\notag
\CI_{12}\leq C\sum\limits_{k'\leq k}\|H_{1,k'}\|_{L^\infty}.
\end{align}
Next, Lemma \ref{Ga} leads us to have
\begin{align}\notag
\CI_{11}{\bf1}_{k\geq1}\leq C\sum\limits_{k'<k}\|H_{2,k'}\|_{L^\infty}.
\end{align}
For $\CI_7$ and $\CI_{13}$, one directly has
\begin{align}\notag
\CI_7
\leq C\|w_{l}\na_v^k\CF_1\|_{L^\infty}, \ \CI_{13}\leq C\|w_{l}\na_v^k\CF_2\|_{L^\infty}.
\end{align}
Finally, for the delicate term $\CI_{9}$, we divide our computations into the following three cases.

\medskip
\noindent{\it Case 1.} $|V|\geq M$ with $M$ suitably large.
From Lemma \ref{Kop}, it follows that
$$
\int\mathbf{k}_w(V,v_\ast)\,dv_\ast\leq \frac{C}{(1+|V|)}\leq \frac{C}{M}.
$$
Using this, it follows that
\begin{equation}\label{I81}
\CI_{9}\leq \sup\limits_{0\leq s\leq t}\int_{\R^3}\mathbf{k}_w(V,v_\ast)\,dv_\ast\|H_{2,k}\|_{L^\infty}\leq \frac{C}{M}\|H_{2,k}\|_{L^\infty}.
\end{equation}

\medskip
\noindent{\it Case 2.} $|V|\leq M$ and $|v_\ast|\geq 2M$. In this situation, we have
$|V-v_\ast|\geq M$, then
\begin{equation*}
\mathbf{k}_w(V,v_\ast)
\leq Ce^{-\frac{\vps M^2}{8}}\mathbf{k}_w(V,v_\ast)e^{\frac{\vps |V-v_\ast|^2}{8}}.
\end{equation*}
By virtue of Lemma \ref{Kop}, one sees that
$\int\mathbf{k}_w(V,v_\ast)e^{\frac{\vps |V-v_\ast|^2}{8}}\,dv_\ast$ is still bounded. In this stage,
we have by a similar argument as for obtaining \eqref{I81} that
\begin{equation*}
\begin{split}
\CI_{9}\leq Ce^{-\frac{\vps M^2}{8}}\|H_{2,k}\|_{L^\infty}.
\end{split}
\end{equation*}
To obtain the final bound for $\CI_{9}$, we are now in a position to handle the last case.

\medskip
\noindent{\it Case 3.} $|V|\leq M$, $|v_\ast|\leq 2M$. In this case, our strategy is to convert the bound in $L^\infty$-norm to the one in $L^2$-norm which will be established later on. To do so, for any large $M>0$,
we choose a number $p=p(M)$ to define
\begin{equation}\label{km}
\mathbf{k}_{w,p}(V,v_\ast)\equiv \mathbf{1}_{|V-v_\ast|\geq\frac{1}{p},|v_\ast|\leq p}\mathbf{k}_{w}(V,v_\ast),
\end{equation}
such that $\sup\limits_{V}\int_{\mathbf{R}^{3}}|\mathbf{k}_{w,p}(V,v_\ast)
-\mathbf{k}_{w}(V,v_\ast)|\,dv_\ast\leq
\frac{1}{M}.$ One then has
\begin{align*}
\CI_{9}&\leq C\sup\limits_{s}\int_{|v_\ast|\leq 2M}\mathbf{k}_{w,p}(V,v_\ast)|\na_v^k\CG_2(v_\ast)|dv_\ast+\frac{1}{M}\|H_{2,k}\|_{L^\infty}\\
&\leq C(p)\sup\limits_{s}\|\na_v^k\CG_2\|+\frac{1}{M}\|H_{2,k}\|_{L^\infty},
\end{align*}
according to H\"older's inequality and the fact that $\int_{\R^3}\mathbf{k}^2_{w,p}(V,v_\ast)dv_\ast<\infty.$

Therefore, it follows that for any large $M>0$,
\begin{equation}\label{CI8}
\begin{split}
\CI_{9}\leq C\left(e^{-\frac{\vps M^2}{8}}+\frac{1}{M}\right)\|H_{2,k}\|_{L^\infty}+C\sup\limits_{s}\|\na_v^k\CG_2\|.
\end{split}
\end{equation}
Combing all the estimates above together, we now conclude to have
\begin{align}\label{lifn}
\left\{\begin{array}{rll}
&\|H_{1,k}\|_{L^\infty}\leq \left(\frac{1}{8}+\frac{C}{l}+C\al\right)\|H_{1,k}\|_{L^\infty}+C\al\|H_{1,0}\|_{L^\infty}
\\[4mm]&\qquad\qquad\quad+C\al\sum\limits_{k'\leq k}\|H_{2,k'}\|_{L^\infty}
+{\bf1}_{k\geq1}C\sum\limits_{k'<k}\|H_{1,k'}\|_{L^\infty}
+C\|w_{l}\na_v^k\CF_1\|_{L^\infty},\\[4mm]
&\|H_{2,k}\|_{L^\infty}\leq \left(\frac{1}{8}+\frac{C}{M}+C\al\right)\|H_{2,k}\|_{L^\infty}+{\bf1}_{k\geq1}C\sum\limits_{k'<k}\|H_{2,k'}\|_{L^\infty}
\\[4mm]&\qquad\qquad\quad+C\sum\limits_{k'\leq k}\|H_{1,k'}\|_{L^\infty}+C\|\na_v^k\CG_2\|+C\|w_{l}\na_v^k\CF_2\|_{L^\infty}.\end{array}\right.
\end{align}
It should be pointed out that the constant $C$ in \eqref{lifn} is independent of $\si$ and $\eps$.

\medskip
\noindent\underline{Step 2. $L^2$ estimates.}
We now deduce the $L^2$ estimate on $\CG_2$ which is necessary due to \eqref{lifn}. Let us start from the macroscopic part of $(\CG_{1},\CG_2)$. Recalling the definition of $\FP_0$, at this stage, we may write
$$
\FP_0\CG_{2}=(a_{2}+\Fb_{2}\cdot v+c_{2}(|v|^2-3))\sqrt{\mu},
$$
and define the projection $\bar{\FP}_0$, from $L^2$ to $\ker(\CL)$, as
$$
\bar{\FP}_0\CG_{1}=(a_{1}+\Fb_{1}\cdot v+c_{1}(|v|^2-3))\mu,
$$
and because $[\CG_1,\CG_2]\in\FX_{\al,m}$, it also follows that 
\begin{align}\label{abR}
a_{1}+a_{2}=0,\ \Fb_{1}+\Fb_{2}=0,\ c_1+c_2=0.
\end{align}
The following significant observation will be used in the later deductions
\begin{align}\label{keyob}
\nu_0f-\si Kf=(1-\si)\nu_0f+\si Lf,\ \nu_0f-\si \CK f=(1-\si)\nu_0f+\si\CL f.
\end{align}
By applying \eqref{keyob} and \eqref{abR}, for any $k\geq0$, we get from $\lag \na_v^k\FP_1\eqref{pals}_2, \na_v^k\FP_1\CG_2\rag$ and Lemma \ref{es.L} that
\begin{align}\label{ip.Gr2}
\frac{1}{2}\min\{\nu_0,\de_0\}\|\na_v^k\FP_1\CG_{2}\|-C{\bf 1}_{k\geq1}\|\FP_1\CG_{2}\|\leq&
C\left|\left\lag\na_v^k\FP_1[(1-\chi_{M})\mu^{-\frac{1}{2}}\CK\CG_1], \na_v^k\FP_1\CG_2\right\rag\right|^{\frac{1}{2}}\notag\\&+C\|\na_v^k\CF_2\|+C\al|[a_1,\Fb_1,c_1]|,
\end{align}
and using Lemma \ref{op.es.lem}, one also has
\begin{align}\label{CKl2}
\bigg|\bigg\lag&\na_v^k\FP_1[(1-\chi_{M})\mu^{-\frac{1}{2}}\CK\CG_1], \na_v^k\FP_1\CG_2\bigg\rag\bigg|\notag\\
\leq& \eta\|\na_v^k\FP_1\CG_2\|^2+C_{\eta}\left\|\na_v^k\FP_1[(1-\chi_{M})\mu^{-\frac{1}{2}}\CK\CG_1]\right\|^2\notag\\
\leq& \eta\|\na_v^k\FP_1\CG_2\|^2+C_{\eta}\left\|\na_v^k[(1-\chi_{M})\mu^{-\frac{1}{2}}\CK\CG_1]\right\|^2
+C_{\eta}\left\|\na_v^k\FP_0[(1-\chi_{M})\mu^{-\frac{1}{2}}\CK\CG_1]\right\|^2\notag\\
\leq& \eta\|\na_v^k\FP_1\CG_2\|^2+C_\eta\sum\limits_{k'\leq k}\|w_{l}\na_v^{k'}\CG_1\|^2_{L^\infty}.
\end{align}
For $l > 5/2$, it follows
\begin{align}\label{wG2}
|[a_1,\Fb_1,c_1]|\leq C\|w_{l}\CG_{1}\|_{L^\infty}.
\end{align}
Now, \eqref{ip.Gr2}, \eqref{CKl2} and \eqref{wG2} give rise to
\begin{align}\label{disG2}
\sum\limits_{0\leq k\leq m}\|\na_v^k\FP_1\CG_{2}\|+|[a_1,\Fb_1,c_1]|
\leq C\sum\limits_{0\leq k'\leq k}\|w_{l}\na_v^{k'}\CG_{1}\|_{L^\infty}+C\sum\limits_{0\leq k\leq m}\|w_{l}\na_v^k\CF_2\|_{L^\infty},
\end{align}
for $l > 5/2$, where $C>0$ is independent of $\eps$.

Consequently, taking the linear combination of \eqref{lifn} and \eqref{disG2} for $0\leq k\leq m$ and adjusting constants, we arrive at
\begin{align*}
\sum\limits_{0\leq k\leq m}\left\{\|H_{1,k}\|_{L^\infty}+\|H_{2,k}\|_{L^\infty}\right\}\leq
C\sum\limits_{0\leq k\leq m}\|w_{l}\na_v^k[\CF_1,\CF_2]\|_{L^\infty}.
\end{align*}
This shows the desired estimate \eqref{Lif.es1} and ends the proof of Lemma \ref{lifpri}.
\end{proof}

With Lemma \ref{lifpri} in hand, we now turn to prove the existence of solutions to \eqref{pals} in $L^\infty$ framework by the contraction mapping method. We employ the continuity technique in the parameter $\si$ developed in \cite{DHWZ-19}. 

\begin{lemma}\label{ex.pals}
Under the same assumption of Lemma \ref{lifpri},
there exists a unique solution $[\CG_1,\CG_2]\in\FX_{\al,m}$
to \eqref{pals} with $\si=1$ satisfying
\begin{align}\label{Lif.es2}
\sum\limits_{0\leq k\leq m}\left\{\|w_{l}\na_v^k\CG_1\|_{L^\infty}+\|w_{l}\na_v^k\CG_2\|_{L^\infty}\right\}\leq
C\sum\limits_{0\leq k\leq m}\left\{\|w_{l}\na_v^k\CF_1\|_{L^\infty}+\|w_{l}\na_v^k\CF_2\|_{L^\infty}\right\}.
\end{align}
\end{lemma}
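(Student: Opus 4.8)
The plan is to prove Lemma~\ref{ex.pals} by the continuity method in the homotopy parameter $\si\in[0,1]$, with the uniform \emph{a priori} bound of Lemma~\ref{lifpri} as the driving estimate. I would let $\FY$ denote the Banach space of pairs $[\CF_1,\CF_2]$ obeying the compatibility condition in \eqref{F12}, normed by $\sum_{0\le k\le m}\|w_{l}\na_v^k[\CF_1,\CF_2]\|_{L^\infty}$, and set
\[
\Sigma=\{\si\in[0,1]:\ \mathscr{L}_\si:\FX_{\al,m}\to\FY\ \text{is a bijection}\}.
\]
For every $\si$, Lemma~\ref{lifpri} says that any solution of $\mathscr{L}_\si[\CG_1,\CG_2]=[\CF_1,\CF_2]$ obeys $\|[\CG_1,\CG_2]\|_{\FX_{\al,m}}\le C_{\mathscr{L}}\|[\CF_1,\CF_2]\|_{\FY}$ with $C_{\mathscr{L}}$ independent of $\si,\eps,\al$; in particular $\mathscr{L}_\si$ is injective on $\FX_{\al,m}$ for all $\si$, which already yields uniqueness and, once $1\in\Sigma$, the bound \eqref{Lif.es2}. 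So it remains to prove $1\in\Sigma$, and I would do this by showing $0\in\Sigma$ and that $\Sigma$ is open with a step size independent of the base point.

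For the base point $\si=0$, the operators $\chi_{M}\CK$, $K$, $(1-\chi_{M})\mu^{-1/2}\CK$ drop out, so \eqref{pals} becomes triangular. The second equation reduces to the linear transport equation $\eps\CG_2-\beta'\na_v\cdot(v\CG_2)-\al v_2\pa_{v_1}\CG_2+\nu_0\CG_2=\CF_2$, which I would solve uniquely in the weighted $L^\infty$ scale, with velocity derivatives up to order $m$, by integrating along the characteristics \eqref{H1CL} and passing to the large-time limit in the mild formula, exactly as in the derivation of \eqref{H20m}; the needed bounds come from the damping lower bound \eqref{sglw} and Lemma~\ref{op.es.lem}. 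Feeding $\CG_2$ into the first equation leaves a transport equation for $\CG_1$ whose only self-coupling is the rank-one, $O(\al)$ term $\al^{-1}\beta''(\CG)\,\na_v\cdot(v\mu)$, which is dispatched by a contraction in the scalar $\lag\CG_1,v_1v_2\rag$ once $\al<\al_0(l)$. Finally, testing the two solved equations against $[1,v_i,|v|^2]$ and $[1,v_i,|v|^2]\sqrt\mu$ respectively and summing, the transport contributions together with the $\beta''$ term collapse onto multiples of the five-dimensional moment functional $\lag\CG_1,[1,v_i,|v|^2]\rag+\lag\CG_2,[1,v_i,|v|^2]\sqrt\mu\rag$ of $[\CG_1,\CG_2]$ with strictly positive coefficients, while the right side vanishes because $[\CF_1,\CF_2]$ satisfies \eqref{F12}; hence that functional is zero, so $[\CG_1,\CG_2]\in\FX_{\al,m}$ and $0\in\Sigma$. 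This is exactly the cancellation recorded in the lines preceding \eqref{abr12n}.

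For openness I would write $\mathscr{L}_\si-\mathscr{L}_{\si'}=(\si-\si')\mathscr{R}$ with $\mathscr{R}[\CG_1,\CG_2]=[-\chi_{M}\CK\CG_1,\ -K\CG_2-(1-\chi_{M})\mu^{-1/2}\CK\CG_1]$. Since $\CK f=Q(f,\mu)+Q_{\textrm{gain}}(\mu,f)$ and $K=\mu^{-1/2}\CK(\mu^{1/2}\cdot)$, the boundedness estimates of Section~\ref{sec.app} (Lemmas~\ref{Ga} and \ref{op.es.lem}) give $\|w_{l}\na_v^k\mathscr{R}[\CG_1,\CG_2]\|_{L^\infty}\le C_{\mathscr{R}}\|[\CG_1,\CG_2]\|_{\FX_{\al,m}}$ with $C_{\mathscr{R}}$ independent of $\si,\eps,\al$, and $\mathscr{R}$ maps $\FX_{\al,m}$ into $\FY$ because $\CK,K$ differ from operators preserving the collision invariants only by a multiple of the identity, so the moment constraint defining $\FX_{\al,m}$ forces the compatibility of $\mathscr{R}[\CG_1,\CG_2]$. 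If $\si'\in\Sigma$, then $\mathscr{L}_\si=\mathscr{L}_{\si'}(\mathrm{Id}+(\si-\si')\mathscr{L}_{\si'}^{-1}\mathscr{R})$, and since $\|\mathscr{L}_{\si'}^{-1}\|_{\FY\to\FX_{\al,m}}\le C_{\mathscr{L}}$, the bracket is invertible by Neumann series whenever $|\si-\si'|<\delta_0:=(C_{\mathscr{L}}C_{\mathscr{R}})^{-1}$, so $\si\in\Sigma$. As $\delta_0$ does not depend on $\si'$, finitely many such steps from $0\in\Sigma$ reach $1\in\Sigma$; this gives existence for \eqref{pals} at $\si=1$, and \eqref{Lif.es2} is then Lemma~\ref{lifpri}.

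The main obstacle, I expect, is the verification in the base step that the explicit characteristic solution at $\si=0$ genuinely lands in $\FX_{\al,m}$, i.e.\ respects the five moment constraints: this is not a consequence of the transport structure alone but of the deliberate placement of the forcing, of the nonlocal term $\beta''(\CG)$, and of the penalty $\eps\CG_i$ in \eqref{pals}, through the cancellations indicated before \eqref{abr12n}; the penalty $\eps$ in particular is what keeps the coefficient of the mass functional positive at $\si=1$. Everything else is soft, and it is worth noting that only crude boundedness of $\CK$ is used here, the sharp smallness of Proposition~\ref{CK} having already been spent inside Lemma~\ref{lifpri}.
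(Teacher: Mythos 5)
Your proposal is correct and follows essentially the same route as the paper: explicit solvability of the decoupled system at $\si=0$ by the characteristic method, preservation of the moment constraints via the identity \eqref{keyob}, and a bootstrap in $\si$ whose uniform step size comes from the $\si$-independence of the constant $C_\mathscr{L}$ in Lemma \ref{lifpri}. The paper phrases each step as a contraction mapping for the operator $\CT_{\si_\ast+\si}=\mathscr{L}_{\si_\ast}^{-1}[\cdots]$ rather than as a Neumann series for $\mathrm{Id}+(\si-\si')\mathscr{L}_{\si'}^{-1}\mathscr{R}$, but these are the same argument.
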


\begin{proof}
If $\si=0$, then \eqref{H10} and \eqref{H20} can be reduced to
\begin{align}
\eps H_1&-\bet' \na_v\cdot(v H_1)+2l \bet' \frac{|v|^2}{1+|v|^2} H_1
-\al v_2\pa_{v_1}H_1
+2l \al \frac{v_2v_1}{{1+|v|^2}}H_1
+\nu_0H_1\notag\\&+\frac{\beta'}{2}|v|^2\sqrt{\mu}H_2
+\al\frac{v_1v_2}{2}\sqrt{\mu}H_2
-w_l\frac{\bet''(\frac{\FH_0}{w_l})}{\al}\na_v\cdot(v\mu)
=w_{l}\CF_1,\notag
\end{align}
and
\begin{align}
\eps H_2&-\bet' \na_v\cdot(v H_2)+2l \bet' \frac{|v|^2}{1+|v|^2} H_2
-\al v_2\pa_{v_1}H_2+2l \al \frac{v_2v_1}{{1+|v|^2}}H_2
+\nu_0 H_2
\notag\\=&w_{l}\CF_2,\notag
\end{align}
respectively. Then, in this case of $\si=0$, the existence of $L^\infty$-solutions can be easily proved by the characteristic method and the contraction mapping theorem, since there is no trouble term involving $K$ or
$\CK$. That is, one can directly show that 
\begin{align}\label{L0}
\|\mathscr{L}_0^{-1}[\CF_1,\CF_2]\|_{\FX_{\al,m}}\leq C_\mathscr{L}\|[\CF_1,\CF_2]\|_{\FX_{\al,m}}.
\end{align}
We now define a operator
\begin{align}
&\CT_\si[\CG_1,\CG_2]=\mathscr{L}^{-1}_{0}\Big[\si \chi_{M}\CK \CG_1+\CF_1,
\ \si (1-\chi_{M})\mu^{-\frac{1}{2}}\CK \CG_1+\si K\CG_2+\CF_2\Big].\notag
\end{align}
Moreover, since $[\CG_1,\CG_2]\in\FX_{\al,m}$, one also has
\begin{align}
\lag \CK\CG_1,[1,v_i,|v|^2]\rag+\lag K\CG_2,[1,v_i,|v|^2]\mu^{\frac{1}{2}}\rag=0,\ 1\leq i\leq3,\notag
\end{align}
according to \eqref{keyob}, which further implies
$$
\lag \CT_\si[\CG_1,\CG_2],[\{1,v_i,|v|^2\},\{1,v_i,|v|^2]\}\mu^{\frac{1}{2}}]\rag=0,
$$
for any $\eps>0.$
Then \eqref{L0} yields
\begin{align}\label{cT}
&\|\CT_\si[\CG_1,\CG_2]-\CT_\si[\CG'_1,\CG'_2]\|_{\FX_{\al,m}}\notag\\[2mm]
\quad&=\Big\|\mathscr{L}^{-1}_{0}\Big[\si\chi_{M}\CK \left(\CG_1-\CG_1'\right),
\si (1-\chi_{M})w_{l}\mu^{-\frac{1}{2}}\CK \left(\CG_1-\CG_1'\right)
\notag\\&\qquad\qquad+\si w_{l}K\left(\CG_2-\CG_2'\right)\Big]\Big\|_{\FX_{\al,m}}\notag\\[2mm]
\quad&=\si\Big\|\mathscr{L}^{-1}_{0}\Big[ w_{l}\chi_{M}\CK \left(\CG_1-\CG_1'\right),
 (1-\chi_{M})w_{l}\mu^{-\frac{1}{2}}\CK \left(\CG_1-\CG_1'\right)
\notag\\&\qquad\qquad+ w_{l}K\left(\CG_2-\CG_2'\right)\Big]\Big\|_{\FX_{\al,m}}\notag\\[2mm]
\quad&\leq \si C_\mathscr{L}\|[\CG_1-\CG_1',\CG_2-\CG_2']\|_{\FX_{\al,m}}\leq \frac{1}{2}\|[\CG_1-\CG_1',\CG_2-\CG_2']\|_{\FX_{\al,m}},
\end{align}
provided that $\si\in[0,\si_\ast]$ with $0<\si_\ast\leq\min\{\frac{1}{2C_\mathscr{L}}\}.$

Thus, we obtain a unique fixed point $[\CG_1,\CG_2]$ in $\FX_{\al,m}$ such that
\begin{align}\notag
\CT_\si[\CG_1,\CG_2]=[\CG_1,\CG_2],
\end{align}
which is equivalent to
\begin{align}
\mathscr{L}_0[\CG_1,\CG_2]=&\Big[\si \chi_{M}\CK \CG_1+\CF_1,
\si (1-\chi_{M})\mu^{-\frac{1}{2}}\CK \CG_1
+\si K\CG_2+\CF_2\Big].\notag
\end{align}
Therefore $[\CG_1,\CG_2]$ is a unique solution to the system
\begin{align}
\mathscr{L}_\si[\CG_1,\CG_2]=\left[\CF_1,\CF_2\right],\ \si\in[0,\si_\ast].\notag
\end{align}
Next, we define
\begin{align}
\CT_{\si_\ast+\si}=&\mathscr{L}_{\si_\ast}^{-1}\Big[\si \chi_{M}\CK \CG_1+\CF_1,
\si (1-\chi_{M})\mu^{-\frac{1}{2}}\CK \CG_1+\si K\CG_2+\CF_2\Big].\notag
\end{align}
Since the constant $C_\mathscr{L}$ in \eqref{Lif.es1} is uniform in $\si\in[0,1]$, one can further verify that $\CT_{\si_\ast+\si}$ with
$\si\in[0,\si_\ast]$ is also a contraction mapping on $\FX_{\al,m}$ by using the similar argument as for obtaining \eqref{cT}.
Namely, we have shown the existence of $\mathscr{L}^{-1}_{2\si_\ast}$ on $\FX_{\al,m}$ and \eqref{Lif.es1} holds true for $\si=2\si_\ast$. Hence, step by step,
one can see that $\mathscr{L}^{-1}_1$ exists in case $\si=1$ and \eqref{Lif.es2} also follows simultaneously. This completes the proof of Lemma \ref{ex.pals}.
\end{proof}

Once Lemma \ref{ex.pals} has been obtained, we can now turn to complete the

\begin{proof}[Proof of Theorem \ref{st.sol}]
We prove the existence of $W^{m,\infty}$ solution to the coupled system \eqref{gr1} and \eqref{gr2} under the condition \eqref{abr12}.

According to Lemma \ref{ex.pals}, there indeed exists a unique solution $[G^{n+1}_{R,1},G^{n+1}_{R,2}]$ to the system \eqref{gr12.ls} satisfying \eqref{abr12n}, provided that
$[G^{n}_{R,1},G^{n}_{R,2}]\in\FX_{\al,m}$ for any $m\geq0$. We now show that the solution sequence $\{[G^{n}_{R,1},G^{n}_{R,2}]\}_{n=0}^{\infty}$
is a Cauchy sequence in $\FX_{\al,m-1}$ with $m\geq1$, hence it is convergent and the limit is the unique solution of the following system
\begin{align}
\eps G_{R,1}&-\bet\na_v\cdot (vG_{R,1})-\al v_2\pa_{v_1}G_{R,1}+\frac{\beta}{2}|v|^2\sqrt{\mu}G_{R,2}+\al\frac{v_1v_2}{2}\sqrt{\mu}G_{R,2}
+\nu_0G_{R,1}
-\chi_{M}\CK G_{R,1}\notag\\
=&\frac{\bet}{\al}\na_v\cdot(v\mu)+\beta\na_v\cdot(v\mu^{\frac{1}{2}}G_1)
+\al v_2\pa_{v_1}(\mu^{\frac{1}{2}}G_1)+
\al Q(\mu^{\frac{1}{2}}G_1,\mu^{\frac{1}{2}}G_1)\notag\\&+\al\{Q(\mu^{\frac{1}{2}}G_R,\mu^{\frac{1}{2}}G_1)
+Q(\mu^{\frac{1}{2}}G_1,\mu^{\frac{1}{2}}G_R)\}+\al Q(\mu^{\frac{1}{2}}G_R,\mu^{\frac{1}{2}}G_R),\label{eps.gr21}
\end{align}
and
\begin{align}\label{eps.gr22}
\eps G_{R,2}&-\bet\na_v\cdot (vG_{R,2})-\al v_2\pa_{v_1}G_{R,2}+LG_{R,2}-(1-\chi_{M})\mu^{-\frac{1}{2}}\CK G_{R,1}=0.
\end{align}
To do this, we first denote
$$
\tilde{\beta}^{n+1}=\beta^{n+1}-\beta^{n}=-\frac{\al^2}{3}\int v_1v_2\mu^{1/2}\tilde{G}^{n+1}_R\,dv, \ 
$$
with $\beta^n$ given by \eqref{n-sq},
and  set
$$\mu^{1/2}\tilde{G}_R^{n+1}=\tilde{G}^{n+1}_{R,1}+\mu^{1/2}\tilde{G}^{n+1}_{R,2},$$
with
$$
[\tilde{G}^{n+1}_{R,1},\tilde{G}^{n+1}_{R,2}]=[G_{R,1}^{n+1}-G_{R,1}^{n},G_{R,2}^{n+1}-G_{R,2}^{n}].
$$
Then  $\tilde{G}_{R,1}^{n+1}$ and $\tilde{G}_{R,2}^{n+1}$ satisfy the following equations:
\begin{eqnarray*}
\left\{\begin{array}{rll}
\begin{split}
\eps\tilde{G}^{n+1}_{R,1}&-\beta^{n}\na_v\cdot (v\tilde{G}^{n+1}_{R,1})-\al v_2\pa_{v_1}\tilde{G}^{n+1}_{R,1}
+\nu_0\tilde{G}^{n+1}_{R,1}
-\chi_{M}\CK \tilde{G}^{n+1}_{R,1}\\&+\frac{\beta^n}{2}|v|^2\sqrt{\mu}\tilde{G}^{n+1}_{R,2}
+\al\frac{v_1v_2}{2}\sqrt{\mu}\tilde{G}^{n+1}_{R,2}-\frac{\tilde{\bet}^{n+1}}{\al}\na_v\cdot(v\mu)\\
=&\tilde{\beta}^{n}\na_v\cdot (vG_{R,1}^{n})+\tilde{\beta}^{n}\na_v\cdot(v\mu^{\frac{1}{2}}G_1)+\al\{Q(\mu^{\frac{1}{2}}\tilde{G}^{n}_R,\mu^{\frac{1}{2}}G_1)
+Q(\mu^{\frac{1}{2}}G_1,\mu^{\frac{1}{2}}\tilde{G}^{n}_R)\}\\&+\al \left\{Q(\mu^{\frac{1}{2}}\tilde{G}_R^{n},\mu^{\frac{1}{2}}\tilde{G}^{n})
+Q(\mu^{\frac{1}{2}}\tilde{G}_R^{n},\mu^{\frac{1}{2}}G^n_R)+Q(\mu^{\frac{1}{2}}G^n_R,\mu^{\frac{1}{2}}\tilde{G}_R^{n})\right\}
\\ \eqdef& \CM(\tilde{G}_R^{n},\tilde{G}_R^{n}),\\
\eps\tilde{G}^{n+1}_{R,2}&-\beta^{n}\na_v\cdot (v \tilde{G}^{n+1}_{R,2})-\al v_2\pa_{v_1}\tilde{G}^{n+1}_{R,2}
+L\tilde{G}^{n+1}_{R,2}-(1-\chi_{M})\mu^{-\frac{1}{2}}\CK \tilde{G}^{n+1}_{R,1}=0,
\end{split}
\end{array}\right.
\end{eqnarray*}
with
\begin{align}
\lag \tilde{G}^{n}_{R,1},[1,v_i,|v|^2]\rag+\lag \tilde{G}^{n}_{R,2},[1,v_i,|v|^2]\mu^{\frac{1}{2}}\rag=0,\ i=1,2,3.\notag
\end{align}
Our goal next is to prove
\begin{align}\label{cau}
\|[\tilde{G}^{n+1}_{R,1},\tilde{G}^{n+1}_{R,2}]\|_{\FX_{\al,m-1}}\leq C\al\|[\tilde{G}^{n}_{R,1},\tilde{G}^{n}_{R,2}]\|_{\FX_{\al,m-1}},
\end{align}
under the condition that
\begin{align}\label{tGbd}
\|[G^n_{R,1},G^n_{R,2}]\|_{\FX_{\al,m}}<C_{\al,m}, 
\end{align}
where the constant $C_{\al,m}$ is finite independent of $n$. 
In fact, on the one hand, thanks to Lemma \ref{lifpri}, it follows that
\begin{align*}
\|[\tilde{G}^{n+1}_{R,1},\tilde{G}^{n+1}_{R,2}]\|_{\FX_{\al,m-1}}\leq C\sum\limits_{0\leq k\leq m-1}\|w_{l}\na_x^k\CN(\tilde{G}_R^{n},\tilde{G}_R^{n})\|_{\infty}.
\end{align*}
On the other hand, we get from Lemma \ref{op.es.lem} that
\begin{align}
\sum\limits_{0\leq k\leq m-1}\|w_{l}\na_x^k\CM(\tilde{G}_R^{n},\tilde{G}_R^{n})\|_{\infty}\leq&
C\al\sum\limits_{0\leq k\leq m-1}\left\{\|w_{l}\na_x^k\tilde{G}^{n}_{R,1}\|_{L^\infty}^2+\|w_{l}\na_x^k\tilde{G}^{n}_{R,2}\|_{L^\infty}^2\right\}\notag
\\&+
C\al\sum\limits_{0\leq k\leq m-1}\|w_{l}\na_x^k\tilde{G}_R^{n}\|_{L^\infty}\sum\limits_{0\leq k\leq m}\|w_{l}\na_x^kG^n_R\|_{L^\infty},\notag
\end{align}
which is further bounded by
\begin{align}
C\al\sum\limits_{0\leq k\leq m-1}\left\{\|w_{l}\na_v^k\tilde{G}^{n}_{R,1}\|_{L^\infty}
+\|w_{l}\na_v^k\tilde{G}^{n}_{R,2}\|_{L^\infty}\right\},\notag
\end{align}
due to \eqref{tGbd}. Thus, \eqref{cau} is valid, in other words, $\{[G^{n}_{R,1},G^{n}_{R,2}]\}_{n=0}^{\infty}$
is a Cauchy sequence in $\FX_{\al,m-1}$ for $\al>0$ suitably small. Hence,
$$
[G^{n}_{R,1},G^{n}_{R,2}]\rightarrow[G_{R,1}^\eps,G_{R,2}^\eps]
$$
strongly in $\FX_{\al,m-1}$ as $n\rightarrow+\infty$, and
\begin{align}\label{betae}
\beta^n\rightarrow\beta^{\eps}
=\frac{\al^2}{6b_0}-\frac{\al^2}{3}\int_{\R^3} v_1v_2(G^\eps_{R,1}+\mu^{1/2}G^\eps_{R,2})\,dv.
\end{align}
And the limit $[G^{\eps}_{R,1},G^{\eps}_{R,2}]$ is a unique solution to \eqref{eps.gr21} and \eqref{eps.gr22}.
Furthermore, it can be directly shown that $[G^{\eps}_{R,1},G^{\eps}_{R,2}]$ enjoys the estimate
\begin{align}\label{Gebd}
\|[G^\eps_{R,1},G^\eps_{R,2}]\|_{\FX_{\al,m}}\leq C\al.
\end{align}
Furthermore, taking the limit $\eps\to 0$, we may repeat the same procedure as for letting $n\to \infty$, so that the limit function 
$[G_{R,1},G_{R,2}]\in\FX_{\al,m}$ is the unique solution of \eqref{gr1} and \eqref{gr2} and enjoys the same bound as \eqref{Gebd}.


Moreover, for any $\eps>0$, it follows that
\begin{align*}
\eps\lag G^{\eps}_{R,1},1\rag+\eps\lag G^{\eps}_{R,2},\mu^{\frac{1}{2}}\rag=0,
\end{align*}
\begin{align*}
(\eps+\beta^\eps)\lag G^{\eps}_{R,1},v_1\rag+(\eps+\beta^\eps)\lag G^{\eps}_{R,2},v_1\mu^{\frac{1}{2}}\rag
+\al\lag G^{\eps}_{R,1},v_2\rag+\al\lag G^{\eps}_{R,2},v_2\mu^{\frac{1}{2}}\rag=0,
\end{align*}
\begin{align*}
(\eps+\beta^\eps)\lag G^{\eps}_{R,1},v_i\rag+(\eps+\beta^\eps)\lag G^{\eps}_{R,2},v_i\mu^{\frac{1}{2}}\rag=0,\ i=2,3,
\end{align*}
and
\begin{align*}
(\eps+\beta^\eps)\lag G^{\eps}_{R,1},|v|^2\rag+(\eps+\beta^\eps)\lag G^{\eps}_{R,2},|v|^2\mu^{\frac{1}{2}}\rag=0,
\end{align*}
consequently,
\begin{align}\label{Ge.cons}
\lag G^{\eps}_{R,1},[1,v_i,|v|^2]\rag+\lag G^{\eps}_{R,2},[1,v_i,|v|^2]\mu^{\frac{1}{2}}\rag=0,\ i=1,2,3,
\end{align}
since $\beta^\eps>0$ due to \eqref{Gebd} and \eqref{betae} and $\al$ can be suitably small. Taking $\eps\rightarrow0$ in \eqref{Ge.cons} gives
rise to \eqref{cls1}. This finishes the proof of Theorem \ref{st.sol}.
\end{proof}

\section{Local Existence}\label{loc.sec}
From this section we turn to study the time-dependent equation  \eqref{beF} in the one-dimensional spatially inhomogeneous setting. The  goal of this section is to first establish the unique local-in-time solution of the Cauchy problem \eqref{beF} and \eqref{beFid}, and the proof of the global existence as well as the large time behavior of solutions is left for the next section. In light of \eqref{def.trans}, we let 
$$
F(t,x,v)=e^{-3\bet t}f(t,x,\frac{v}{e^{\bet t}})\eqdef e^{-3\bet t}f(t,x,\xi),
$$
then the Cauchy problem \eqref{beF} and \eqref{beFid} is converted to
\begin{eqnarray}\label{bef}
\left\{
\begin{array}{rll}
\begin{split}
&\pa_t f+e^{\beta t}\xi_1\pa_xf-\bet \na_\xi\cdot(\xi f)-\al \xi_2\pa_{\xi_1}f =Q(f,f),\ t>0,\ x\in\T,\ \xi\in \R^3,\\
&f(0,x,\xi)=F_0(x,\xi),\ x\in\T,\ \xi\in \R^3.
\end{split}
\end{array}\right.
\end{eqnarray}
Now, setting $\tilde{f}(t,x,\xi)=f(t,x,\xi)-G(\xi)$, where the self-similar profile $G$ is determined in Theorem \ref{st.sol}, one can see that $\tilde{f}$ satisfies
\begin{eqnarray*}
\left\{
\begin{array}{rll}
\begin{split}
&\pa_t \tilde{f}+e^{\beta t}\xi_1\pa_x\tilde{f}-\bet \na_\xi\cdot(\xi \tilde{f})-\al \xi_2\pa_{\xi_1}\tilde{f}\\
&\qquad\qquad=Q(\tilde{f},\tilde{f})+Q(\tilde{f},G)+Q(G,\tilde{f}),\ t>0,\ x\in\T,\ \xi\in \R^3,\\
&\tilde{f}(0,x,\xi)=F_0(x,\xi)-G(\xi),\ x\in\T,\ \xi\in \R^3.
\end{split}
\end{array}\right.
\end{eqnarray*}
Defining next $\tilde{f}(t,x,\xi)=\sqrt{\mu}\tilde{g}$ and recalling $G=\mu+\sqrt{\mu}\{\al G_1+\al G_R\}$, we have
\begin{eqnarray}\label{betg}
\left\{
\begin{array}{rll}
\begin{split}
&\pa_t \tilde{g}+e^{\beta t}\xi_1\pa_x\tilde{g}-\bet \na_\xi\cdot(\xi \tilde{g})+\frac{\bet}{2}|\xi|^2\tilde{g}-\al \xi_2\pa_{\xi_1}\tilde{g}+\frac{\al}{2}\xi_1\xi_2\tilde{g}+L\tilde{g}\\
&\qquad=\Ga(\tilde{g},\tilde{g})+\Ga(\tilde{g},\al G_1+\al G_R)+\Ga(\al G_1+\al G_R,\tilde{g}),\quad t>0,\ x\in\T,\ \xi\in \R^3,\\[2mm]
&\tilde{g}(0,x,\xi)=\tilde{g}_0=\frac{F_0(x,\xi)-G(\xi)}{\sqrt{\mu}},\ x\in\T,\ \xi\in \R^3.
\end{split}
\end{array}\right.
\end{eqnarray}
To solve \eqref{betg}, since there is a strong growth term $\frac{\al}{2}\xi_1\xi_2\tilde{g}$ in \eqref{betg}, it is more convenient to consider the decomposition $\sqrt{\mu}\tilde{g}=g_1+\sqrt{\mu}g_2$, where $g_1$ and $g_2$ satisfy
\begin{eqnarray}\label{beg1}
\left\{
\begin{array}{rll}
&\pa_t g_1+e^{\beta t}\xi_1\pa_xg_1-\bet \na_\xi\cdot(\xi g_1)-\al \xi_2\pa_{\xi_1}g_1+\nu_0g_1\\[2mm]
&\quad=\chi_{M}\CK g_1-\frac{\bet}{2}|\xi|^2\mu^{\frac{1}{2}}g_2-\frac{\al}{2}\mu^{\frac{1}{2}}\xi_1\xi_2g_2+\tilde{H}(g_1,g_2),\
t>0,\ x\in\T,\ \xi\in \R^3,\\[2mm]
&g_1(0,x,\xi)=0,\ x\in\T,\ \xi\in \R^3,\end{array}\right.
\end{eqnarray}
and
\begin{eqnarray}\label{beg2}
\left\{
\begin{array}{rll}
\begin{split}
&\pa_t g_2+e^{\beta t}\xi_1\pa_xg_2-\bet \na_\xi\cdot(\xi g_2)-\al \xi_2\pa_{\xi_1}g_2+Lg_2
\\[2mm]
&\quad= \mu^{-1/2}(1-\chi_{M})\CK g_1,\ t>0,\ x\in\T,\ \xi\in \R^3,\\[2mm]
&g_2(0,x,\xi)=\frac{F_0(x,\xi)-G(\xi)}{\sqrt{\mu}}\eqdef \tilde{g}_{0}(x,\xi),\ x\in\T,\ \xi\in \R^3,
\end{split}
\end{array}\right.
\end{eqnarray}
respectively. Here
\begin{align}
\tilde{H}(g_1,g_2)=&Q(g_1+\mu^{\frac{1}{2}}g_2,g_1+\mu^{\frac{1}{2}}g_2)
+Q(g_1+\mu^{\frac{1}{2}}g_2,\mu^{\frac{1}{2}}(\al G_1+\al G_R)
\notag\\&+Q(\mu^{\frac{1}{2}}(\al G_1+\al G_R),g_1+\mu^{\frac{1}{2}}g_2).\notag
\end{align}
We shall look for solutions of \eqref{beg1} and \eqref{beg2} in the following function space
\begin{equation*}
\begin{split}
\FY_{\al,T}=&\Big\{(\CG_1,\CG_2)\bigg| \sum\limits_{|\ga_0\leq2}\sup\limits_{0\leq t\leq T}\left\{\|w_{l}\pa_x^{\ga_0}\CG_1(t)\|_{L^\infty}+\al\|w_{l}\pa_x^{\ga_0}\CG_2(t)\|_{L^\infty}\right\}<+\infty,\\
&\qquad\qquad\qquad\lag\CG_1,[1,v_i]\rag+\lag\CG_2,[1,v_i]\mu^{\frac{1}{2}}\rag=0,\ i=1,2,3
\Big\},
\end{split}
\end{equation*}
supplemented with the norm
$$
\|[\CG_1,\CG_2]\|_{\FY_{\al,T}}=\sum\limits_{\ga_0\leq2}\sup\limits_{0\leq t\leq T}\left\{\|w_{l}\pa_x^{\ga_0}\CG_1(t)\|_{L^\infty}+\al\|w_{l}\pa_x^{\ga_0}\CG_2(t)\|_{L^\infty}\right\}.
$$

\begin{theorem}[Local existence]\label{loc.th}
Under the conditions listed in Theorem \ref{ge.th}, there exits  $T_\ast>0$ which may depend on $\al$ such that the coupling systems \eqref{beg1} and \eqref{beg2} admit a unique local in time solution $[g_1(t,x,\xi),g_2(t,x,\xi)]$ satisfying
\begin{align*}
\|[g_1,g_2]\|_{\FY_{\al,T_\ast}}\leq2\varepsilon_0\al.
\end{align*}
\end{theorem}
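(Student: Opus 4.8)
The plan is to run a contraction–mapping argument on a short time interval $[0,T_\ast]$, placing the collision gain operators, the cross–coupling terms and the bilinear term $\tilde H$ on the previous iterate so that the linear problems actually solved are pure transport equations with a spectral–gap damping. Precisely, fix $T>0$ (to be taken small) and, given $[\bar g_1,\bar g_2]$ in the ball $B=\{[\CG_1,\CG_2]:\|[\CG_1,\CG_2]\|_{\FY_{\al,T}}\le 2\varepsilon_0\al\}$, define $[g_1,g_2]=\Phi[\bar g_1,\bar g_2]$ as the solution of the systems obtained from \eqref{beg1}--\eqref{beg2} by replacing on the right–hand sides $\chi_{M}\CK g_1$, $\mu^{-1/2}(1-\chi_{M})\CK g_1$, $\tilde H(g_1,g_2)$ and the term $Kg_2$ arising from $Lg_2=\nu_0 g_2-Kg_2$ with the same expressions evaluated at $[\bar g_1,\bar g_2]$, keeping the data $g_1(0)=0$, $g_2(0)=\tilde g_0$. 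The Caflisch–type splitting $\sqrt\mu\,\tilde g=g_1+\sqrt\mu\,g_2$ is what makes this viable: it has already moved the velocity–growth term $\tfrac{\al}{2}\xi_1\xi_2\tilde g$ into the unweighted component, so the left–hand sides of \eqref{beg1}--\eqref{beg2} carry only the bounded multiplier $\nu_0$ (resp.\ $L$, handled through $\nu_0-K$). Since the transport coefficients do not depend on $x$, $\pa_x^{\ga_0}g_1$ and $\pa_x^{\ga_0}g_2$ solve the same equations with $\pa_x^{\ga_0}$ applied to the sources, so it suffices to estimate each order $0\le\ga_0\le 2$ separately.

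Each linear problem is then solved along the characteristics $[X(s;t,x,\xi),V(s;t,x,\xi)]$ of $\pa_t+e^{\beta t}\xi_1\pa_x-\beta\na_\xi\cdot(\xi\,\cdot)-\al\xi_2\pa_{\xi_1}$, which satisfy $\dot X=e^{\beta s}V_1$, $\dot V_1=-\beta V_1-\al V_2$, $\dot V_i=-\beta V_i$ $(i=2,3)$ and are explicit. Writing $H_{j,\ga_0}=w_l\pa_x^{\ga_0}g_j$ and differentiating the weight along the flow, $H_{j,\ga_0}$ obeys an ODE along characteristics with damping coefficient
\[
\CA(\tau)=\nu_0-3\beta+2l\beta\frac{|V(\tau)|^2}{1+|V(\tau)|^2}+2l\al\frac{V_1(\tau)V_2(\tau)}{1+|V(\tau)|^2}\ \ge\ \frac{\nu_0}{2},
\]
the last inequality holding exactly in the smallness regime $\beta\sim\al^2$, $l\al\ll1$ of Theorem \ref{st.sol}. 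Integrating the mild formulation gives, for each $\ga_0$,
\[
\|H_{j,\ga_0}(t)\|_{L^\infty}\le e^{-\nu_0 t/2}\|H_{j,\ga_0}(0)\|_{L^\infty}+\int_0^t e^{-\nu_0(t-s)/2}\big\|w_l\pa_x^{\ga_0}(\mathrm{RHS}_j)(s)\big\|_{L^\infty}\,ds,
\]
and the $x$–transport on $\T$ drops out of these $L^\infty$ bounds.

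The self–mapping property on $B$ rests on the crude bound $\int_0^t e^{-\nu_0(t-s)/2}\,ds\le t\le T$. Lemma \ref{op.es.lem} applied to $\CK f=Q(f,\mu)+Q_{\textrm{gain}}(\mu,f)$, together with the kernel estimate of Lemma \ref{Kop} for $K$, yields $\|w_l\pa_x^{\ga_0}\chi_{M}\CK\bar g_1\|_{L^\infty}\lesssim\|w_l\pa_x^{\ga_0}\bar g_1\|_{L^\infty}$ and $\|w_l\pa_x^{\ga_0}K\bar g_2\|_{L^\infty}\lesssim\|w_l\pa_x^{\ga_0}\bar g_2\|_{L^\infty}$; crucially, on the support of $1-\chi_{M}$ one has $\mu^{-1/2}\le C_M=C_M(l)<\infty$, whence $\|w_l\pa_x^{\ga_0}\mu^{-1/2}(1-\chi_{M})\CK\bar g_1\|_{L^\infty}\lesssim_M\|w_l\pa_x^{\ga_0}\bar g_1\|_{L^\infty}$. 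Every remaining source either carries an explicit factor $\al$ (the $\tfrac\al2$–term, the $\beta\sim\al^2$–term, and the $\al G_1+\al G_R$ contributions in $\tilde H$, using \eqref{Gebd}) or is quadratic in the unknowns (the $Q(g_1+\mu^{1/2}g_2,g_1+\mu^{1/2}g_2)$ piece of $\tilde H$, bounded by Lemma \ref{op.es.lem}); on $B$ all of them are $\lesssim\varepsilon_0(\varepsilon_0+\al)$. Combining with $g_1(0)=0$ and $\sum_{0\le\ga_0\le2}\|w_l\pa_x^{\ga_0}\tilde g_0\|_{L^\infty}\le\varepsilon_0$ from \eqref{ge.th.ids}, one sees that for $T_\ast$ small enough (depending on $\al$, $\varepsilon_0$, $l$) one gets $\|[g_1,g_2]\|_{\FY_{\al,T_\ast}}\le 2\varepsilon_0\al$; the same scheme applied to the difference of two iterates — whose initial data vanish and whose sources are linear in the difference times a factor $\varepsilon_0+\al$, or the kernel operators, hence multiplied by $T_\ast$ — shows $\Phi$ is a contraction on $B$. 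Finally, the conservation constraints defining $\FY_{\al,T_\ast}$, which hold at $t=0$ by \eqref{id.cons} (together with $\int_{\R^3}vG\,dv=0$), are propagated by the scheme since the sources of \eqref{beg1} plus $\sqrt\mu$ times those of \eqref{beg2} are orthogonal to $[1,v_i]$; hence the fixed point lies in $\FY_{\al,T_\ast}$ and is the desired solution.

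The main obstacle is closing the weighted $L^\infty$ estimate along the characteristics for the non–Gaussian component $g_1$ in the presence of the shear term $-\al\xi_2\pa_{\xi_1}$: the polynomial weight $w_l$ produces the indefinite multiplier $2l\al\,V_1V_2/(1+|V|^2)$, which can be absorbed into the collision damping $\nu_0$ only when $l\al$ is small, so the admissible shear rate is once more tied to the weight exponent, and the choice of cutoff $M=M(l)$ feeds into the constant $C_M$ controlling $\mu^{-1/2}(1-\chi_{M})\CK$. Together with the fact that Proposition \ref{CK} provides the small, $O(1/l)$, bound on $\CK$ only for $k\ge1$ derivatives and gives no gain at $k=0$, this forces us to extract the smallness needed for the zeroth–order contributions purely from the shortness of $[0,T_\ast]$ rather than from the structure of $\CK$ and $K$ — which is exactly why $T_\ast$ is allowed to depend on $\al$.
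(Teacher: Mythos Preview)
Your proposal is correct and follows essentially the same route as the paper: freeze the collision operators $\chi_M\CK$, $K$, $(1-\chi_M)\mu^{-1/2}\CK$ and the nonlinear source $\tilde H$ at the previous iterate, solve the resulting damped transport equations by the explicit characteristics \eqref{chlp2}, verify the weighted damping coefficient $\CA\ge\nu_0/2$ under $l\al\ll1$, and close the contraction on the ball of radius $2\varepsilon_0\al$ via the crude time factor $T_\ast$. The paper's write-up is the same scheme with the mild formulas \eqref{h10m}--\eqref{h20m} spelled out; your additional remark on propagating the $[1,v_i]$ constraints is not used in the paper's local argument (the ball $\FY^{\vps_0}_{\al,T}$ there carries only the size and initial-data conditions), but it does no harm.
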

\begin{proof}
Our proof is based on the Duhamel's principle and contraction mapping method.
We first consider the following approximation equations
\begin{eqnarray}\label{ap.beg1}
\left\{
\begin{array}{rll}
&\pa_t g_1+e^{\beta t}\xi_1\pa_xg_1-\bet \na_\xi\cdot(\xi g_1)-\al \xi_2\pa_{\xi_1}g_1+\nu_0g_1\\[2mm]
&\quad=\chi_{M}\CK g'_1-\frac{\bet}{2}|\xi|^2\mu^{\frac{1}{2}}g'_2-\frac{\al}{2}\mu^{\frac{1}{2}}\xi_1\xi_2g'_2+\tilde{H}(g'_1,g'_2),\
t>0,\ x\in\T,\ \xi\in \R^3,\\[2mm]
&g_1(0,x,\xi)=0,\ x\in\T,\ \xi\in \R^3,\end{array}\right.
\end{eqnarray}
and
\begin{eqnarray}\label{ap.beg2}
\left\{
\begin{array}{rll}
\begin{split}
&\pa_t g_2+e^{\beta t}\xi_1\pa_xg_2-\bet \na_\xi\cdot(\xi g_2)-\al \xi_2\pa_{\xi_1}g_2+\nu_0g_2
\\[2mm]
&\quad=Kg'_2+ \mu^{-\frac{1}{2}}(1-\chi_{M})\CK g'_1,\ t>0,\ x\in\T,\ \xi\in \R^3,\\[2mm]
&g_2(0,x,\xi)=\frac{F_0(x,\xi)-G(\xi)}{\sqrt{\mu}}\eqdef \tilde{g}_{0}(x,\xi),\ x\in\T,\ \xi\in \R^3.
\end{split}
\end{array}\right.
\end{eqnarray}
Let $[g_1,g_2]$ be a solution of the pair of \eqref{ap.beg1} and \eqref{ap.beg2} with $[g'_1,g'_2]$ being given.
Then the nonlinear operator $\CN$ is formally defined as
$$
\CN([g'_1,g'_2])=[g_1,g_2].
$$
Our aim is to prove that there exists a sufficiently small $T_\ast>0$ such that $\CN[g'_1,g'_2]$ has a unique fixed point in
some Banach space by adopting the contraction mapping method. In fact, since
\begin{align*}
\sum\limits_{\ga_0\leq2}\|w_{l}\pa^{\ga_0}_x\tilde{g}_{0}\|_{L^\infty}\leq \vps_0,
\end{align*}
we can define the following Banach space
\begin{equation*}
\begin{split}
\FY^{\vps_0}_{\al,T}=&\Big\{(\CG_1,\CG_2)\bigg| \sum\limits_{\ga_0\leq2}\sup\limits_{0\leq t\leq T}\left\{\|w_{l}\pa_x^{\ga_0}\CG_1(t)\|_{L^\infty}+\al\|w_{l}\pa_x^{\ga_0}\CG_2(t)\|_{L^\infty}\right\}
\leq2\vps_0\al,\ \vps_0>0,\\
&\qquad\qquad\qquad\CG_1(0)=0,\ \CG_2(0)=\tilde{g}_{0}
\Big\},
\end{split}
\end{equation*}
associated with the norm
$$
\|[\CG_1,\CG_2]\|_{\FY^{\vps_0}_{\al,T}}=\sum\limits_{\ga_0\leq2}\sup\limits_{0\leq t\leq T}\left\{\|w_{l}\pa_x^{\ga_0}\CG_1(t)\|_{L^\infty}+\al\|w_{l}\pa_x^{\ga_0}\CG_2(t)\|_{L^\infty}\right\}.
$$
We now show that
\begin{align*}
\CN: \FY^{\vps_0}_{\al,T}\rightarrow \FY^{\vps_0}_{\al,T},
\end{align*}
is well-defined and $\CN$ is a contraction mapping for some $T>0$.

Let us denote $h_1=w_{l}\pa_x^{\ga_0}g_1$ and $h_2=w_{l}\pa_x^{\ga_0}g_2$ with $\ga_0\leq2$, then $[h_1,h_2]$ satisfies
\begin{eqnarray}\label{ap.beh1}
\left\{
\begin{array}{rll}
\begin{split}
&\pa_th_1+e^{\beta t}\xi_1\pa_xh_1-\bet \na_\xi\cdot(\xi h_1)-\al \xi_2\pa_{\xi_1}h_1
+2l \bet \frac{|\xi|^2}{1+|\xi|^2} h_1
+2l \al \frac{\xi_2\xi_1}{{1+|\xi|^2}}h_1+\nu_0h_1\\
&\quad=\chi_{M}w_{l}\CK \left(\frac{h'_1}{w_{l}}\right)
-\frac{\bet}{2}|\xi|^2\mu^{\frac{1}{2}}h'_2-\frac{\al}{2}\mu^{\frac{1}{2}}\xi_1\xi_2h'_2\\
&\qquad+w_{l}\pa_x^{\ga_0}\tilde{H}(g'_1,g'_2),\quad t>0,\ x\in\T,\ \xi\in \R^3,\\
&h_1(0,x,\xi)=0,\ x\in\T,\ \xi\in \R^3,
\end{split}
\end{array}\right.
\end{eqnarray}
and
\begin{eqnarray}\label{ap.beh2}
\left\{
\begin{array}{rll}
\begin{split}
&\pa_t h_2+e^{\beta t}\xi_1\pa_xh_2-\bet \na_\xi\cdot(\xi h_2)-\al \xi_2\pa_{\xi_1}h_2
+2l \bet \frac{|\xi|^2}{1+|\xi|^2} h_2
\\&\qquad+2l \al \frac{\xi_2\xi_1}{{1+|\xi|^2}}h_2+\nu_0h_2-w_{l}K\left(\frac{h'_2}{w_{l}}\right)
\\
&\qquad = w_{l}\mu^{-1/2}(1-\chi_{M})\CK \left(\frac{h'_1}{w_{l}}\right),t>0,\ x\in\T,\ \xi\in \R^3,\\
&h_2(0,x,\xi)= w_{l}\pa_x^{\ga_0}\tilde{g}_{0}(x,\xi),\ x\in\T,\ \xi\in \R^3,
\end{split}
\end{array}\right.
\end{eqnarray}
where $h'_i=w_{l}\pa_x^{\ga_0}g'_i$$(i=1,2)$.

Next, we define the characteristic line $[s,X(s;t,x,\xi),V(s;t,x,\xi)]$ for equations \eqref{ap.beh1} and \eqref{ap.beh2} passing through $(t,x,\xi)$ such that
\begin{align*}
\left\{\begin{array}{rll}
&\frac{dX}{ds}=e^{\beta s}V_1(s;t,x,\xi),\\[2mm]
&\frac{d V_1}{ds}=-\beta V_1(s;t,x,\xi)-\al V_2(s;t,x,\xi),\\[2mm]
&\frac{d V_i}{ds}=-\beta V_i(s;t,x,\xi),\ i=2,3,\\[2mm]
&X(t;t,x,\xi)=x,\ V(t;t,x,\xi)=\xi,
\end{array}\right.
\end{align*}
which is equivalent to
\begin{equation}\label{chlp2}
\left\{\begin{split}
&X(s;t,x,\xi)=e^{\beta(t-s)}\Big(x-(t-s)\xi_1-\frac{1}{2}\al(t-s)^2\xi_2\Big),\\[2mm]
&V_1(s;t,x,\xi)=e^{\beta(t-s)}(\xi_1+\al\xi_2(t-s)),\\[2mm]
&V_i(s;t,x,\xi)=e^{\beta(t-s)}\xi_i.\ \ i=2,3.
\end{split}\right.
\end{equation}
Using this, as \eqref{H10m} and \eqref{H20m}, we can write the solution of \eqref{ap.beh1} as
\begin{align*}
[h_1,h_2]=\CQ(g'_1,g'_2)=[\CQ_{1}(g'_1,g'_2),\CQ_{2}(g'_1,g'_2)],
\end{align*}
with
\begin{align}\label{h10m}
\CQ_{1}(g'_1,g'_2)
=&\int_0^{t}e^{-\int_{s}^t\CA(\tau,V(\tau))\,d\tau}\left\{\chi_{M}w_l\CK
\left(\frac{h'_1}{w_{l}}\right)\right\}(V(s))\,ds\notag\\
&+\frac{\beta}{2}\int_0^{t}e^{-\int_{s}^t\CA(\tau,V(\tau))\,d\tau}
|V(s)|^2\sqrt{\mu}(V(s))h'_2(V(s))\,ds\notag\\
&+\al\int_0^{t}e^{-\int_{s}^t\CA(\tau,V(\tau))\,d\tau}
\frac{V_1(s)V_2(s)}{2}\sqrt{\mu}(V(s))h'_2(V(s))\,ds\notag\\
&+\int_0^{t}e^{-\int_{s}^t\CA(\tau,V(\tau))\,d\tau}\left(w_{l}\tilde{H}\right)(V(s))\,ds,
\end{align}
and
\begin{align}\label{h20m}
\CQ_{2}(g'_1,g'_2)
=&e^{-\int_{0}^t\CA(\tau,V(\tau))\,d\tau}(w_{l}\pa^{\ga_0}_x\tilde{g}_{0})(X(0),V(0))\notag\\
&+\int_0^{t}e^{-\int_{s}^t\CA(\tau,V(\tau))\,d\tau}\left\{(1-\chi_{M})\mu^{-\frac{1}{2}}w_{l}\CK
\left(\frac{h'_1}{w_{l}}\right)\right\}(V(s))\,ds\notag\\
&+\int_0^{t}e^{-\int_{s}^t\CA(\tau,V(\tau))\,d\tau}\left[w_{l}K\left(\frac{h'_2}{w_{l}}\right)\right](V(s))\,ds,
\end{align}
where $\CA=\CA^\eps-\eps.$

Let $[g'_1,g'_2]\in\FY^{\vps_0}_{\al,T_\ast}$. In light of \eqref{sglw}, taking $L^\infty$ estimates of $\CQ[g'_1,g'_2]$ and applying Lemmas \ref{Kop}, \ref{op.es.lem} and \ref{CK}, one directly has
\begin{align}\label{Q1}
\sum\limits_{\ga_0\leq2}\sup\limits_{0\leq t\leq T_\ast}\|\CQ_1[g'_1,g'_2]\|_{L^\infty}
\leq& (\frac{C}{l }+C\al+C\vps_0)T_\ast\sum\limits_{\ga_0\leq2} \sup\limits_{0\leq t\leq T_\ast}\|h'_1\|_{L^\infty}
\notag\\&+C(\al+\vps_0) T_\ast\sum\limits_{\ga_0\leq2} \sup\limits_{0\leq t\leq T_\ast}\|h'_2\|_{L^\infty}
\leq \frac{\vps_0\al}{2},
\end{align}
and
\begin{align}\label{Q2}
\sum\limits_{\ga_0\leq2}&\sup\limits_{0\leq t\leq T_\ast}\al\|\CQ_2[g'_1,g'_2]\|_{L^\infty}\notag\\
\leq& \vps_0\al+C T_\ast\al \sum\limits_{\ga_0\leq2}\bigg\{\sup\limits_{0\leq t\leq T_\ast}\|h'_1\|_{L^\infty}+\sup\limits_{0\leq t\leq T_\ast}\|h'_2\|_{L^\infty}\bigg\}\leq \frac{3\vps_0\al}{2},
\end{align}
provided that $T_\ast>0$ is suitably small.
And similarly, for $[g'_1,g'_2]\in\FY^{\vps_0}_{\al,T_\ast}$ and
$[g''_1,g''_2]\in\FY^{\vps_0}_{\al,T_\ast}$, it follows that 
\begin{align}\label{dfh1}
\sum\limits_{\ga_0\leq2}&\sup\limits_{0\leq t\leq T_\ast}\|\CQ_1[g'_1,g'_2]-\CQ_1[g''_1,g''_2]\|_{L^\infty}\notag\\
\leq& (\frac{C}{l }+C\al+C\vps_0)T_\ast \sum\limits_{\ga_0\leq2}\sup\limits_{0\leq t\leq T_\ast}\|h'_1-h_1''\|_{L^\infty}
\notag\\&+C(\al+\vps_0) T_\ast \sum\limits_{\ga_0\leq2}\sup\limits_{0\leq t\leq T_\ast}\|h'_2-h_2''\|_{L^\infty}\notag\\
\leq&\frac{1}{4}\sum\limits_{\ga_0\leq2}\left\{\sup\limits_{0\leq t\leq T_\ast}\|h'_1-h_1''\|_{L^\infty}+\al\sup\limits_{0\leq t\leq T_\ast}\|h'_2-h_2''\|_{L^\infty}\right\},
\end{align}
and
\begin{align}\label{dfh2}
\sum\limits_{\ga_0\leq2}&\al\sup\limits_{0\leq t\leq T_\ast}\|\CQ_2[g'_1,g'_2]-\CQ_2[g''_1,g''_2]\|_{L^\infty}\notag\\
\leq& C T_\ast \al\sum\limits_{\ga_0\leq2}\left\{\sup\limits_{0\leq t\leq T_\ast}\|h'_1-h_1''\|_{L^\infty}+\sup\limits_{0\leq t\leq T_\ast}\|h'_2-h_2''\|_{L^\infty}\right\}\notag\\
\leq& \frac{1}{4}\sum\limits_{\ga_0\leq2}\left\{\sup\limits_{0\leq t\leq T_\ast}\|h'_1-h_1''\|_{L^\infty}+\al\sup\limits_{0\leq t\leq T_\ast}\|h'_2-h_2''\|_{L^\infty}\right\},
\end{align}
for $T_\ast>0$ small enough.  Here, the following type of estimates have been also used:
\begin{align}
\|w_{l}&[Q(\mu^{\frac{1}{2}}g'_2,\mu^{\frac{1}{2}}g'_2)-Q(\mu^{\frac{1}{2}}g''_2,\mu^{\frac{1}{2}}g''_2)]\|_{L^\infty}\notag\\ \leq&\|w_{l}Q(\mu^{\frac{1}{2}}(g'_2-g_2''),\mu^{\frac{1}{2}}(g'_2-g_2''))\|_{L^\infty}
+\|w_{l}Q(\mu^{\frac{1}{2}}(g'_2-g_2''),\mu^{\frac{1}{2}}g_2'')\|_{L^\infty}
\notag\\&+\|w_{l}Q(\mu^{\frac{1}{2}}g_2'',\mu^{\frac{1}{2}}(g'_2-g_2''))\|_{L^\infty}\notag\\
\leq& C\left\{\|w_{l}[g'_2-g_2'']\|_{L^\infty}^2+2\|w_{l}g''_2\|_{L^\infty}\|w_{l}[g'_2-g_2'']\|_{L^\infty}\right\}.\notag
\end{align}
Consequently, \eqref{dfh1} and \eqref{dfh2} lead to
\begin{align}\notag
\|\CQ[g'_1,g'_2]-\CQ[g''_1,g''_2]\|_{\FY^{\vps_0}_{\al,T_\ast}}\leq\frac{1}{2}\|[g'_1,g'_2]-[g''_1,g''_2]\|_{\FY^{\vps_0}_{\al,T_\ast}}.
\end{align}
This together with \eqref{Q1} and \eqref{Q2} imply that  there exists $T_\ast>0$ such that $\CN$ is a contraction mapping on $\FY^{\vps_0}_{\al,T_\ast}$. Hence, there exists a unique $[g_1,g_2]\in\FY^{\vps_0}_{\al,T_\ast}$ such that
$$
[g_1,g_2]=\CN(g_1,g_2).
$$
This completes the proof of Theorem \ref{loc.th}.
\end{proof}

\section{Convergence to the steady state}\label{ge.sec}
Following the previous section regarding the local existence, the goal of this section is to establish the global existence of the Cauchy problem \eqref{beF} and \eqref{beFid}. More precisely, we shall construct a unique global-in-time solution around the self-similar profile, and also prove its large time asymptotic behavior with the exponential rate of convergence.

As in the previous sections, we denote the macroscopic part of $\tilde{g}$ by
$$\FP_0\tilde{g}=\{a+\Fb\cdot \xi+c(|\xi|^2-3)\}\sqrt{\mu}.$$
By taking the following velocity moments
\begin{equation*}
    \mu^{\frac{1}{2}}, \xi_j\mu^{\frac{1}{2}}, \frac{1}{6}(|\xi|^2-3)\mu^{\frac{1}{2}},
    A_{ij}=(\xi_i\xi_j-\frac{\de_{ij}}{3}|\xi|^2)\mu^{\frac{1}{2}}, B_i=\frac{1}{10}(|\xi|^2-5)\xi_i \mu^{\frac{1}{2}}
\end{equation*}
with {$1\leq i,j\leq 3$} for the equation
\begin{eqnarray}\label{betg.s4}
\left\{
\begin{array}{rll}
\begin{split}
&\pa_t \tilde{g}+e^{\beta t}\xi_1\pa_x\tilde{g}-\bet \na_\xi\cdot(\xi \tilde{g})+\frac{\bet}{2}|\xi|^2\tilde{g}-\al \xi_2\pa_{\xi_1}\tilde{g}+\frac{\al}{2}\xi_1\xi_2\tilde{g}+L\tilde{g}\\
&\qquad=\underbrace{\Ga(\tilde{g},\tilde{g})+\Ga(\tilde{g},\al G_1+\al G_R)
+\Ga(\al G_1+\al G_R,\tilde{g})}_{\tilde{\CF}},
\\&\qquad t>0,\ x\in\T,\ \xi\in \R^3,\\[2mm]
&\tilde{g}(0,x,\xi)=\tilde{g}_0=\frac{F_0(x,\xi)-G(\xi)}{\sqrt{\mu}},\ x\in\T,\ \xi\in \R^3,
\end{split}
\end{array}\right.
\end{eqnarray}
one sees that the coefficient functions $[a,\Fb,c]=[a,\Fb,c](t,x)$ satisfy
the fluid-type system
\begin{eqnarray}\label{ab}
\left\{
\begin{array}{rll}
\begin{split}
&\pa_t a+e^{\beta t}\pa_xb_1=0,\\
&\pa_t b_1+\beta b_1+e^{\beta t}\pa_x(a+2c)+\al b_2+e^{\beta t}\pa_x\int_{\R^3}\xi_1^2\sqrt{\mu}\FP_1\tilde{g}\,d\xi=0,\\
&\pa_t b_i+\beta b_i+e^{\beta t}\pa_x\lag A_{1i},\FP_1\tilde{g}\rag=0,\ i=2,3,
\end{split}
\end{array}\right.
\end{eqnarray}
\begin{align}\label{c}
\pa_t c&+\beta a+2\beta c+\frac{e^{\beta t}}{3}\pa_xb_1+\frac{e^{\beta t}}{6}\pa_x\int_{\R^3}\xi_1(|\xi|^2-3)\sqrt{\mu}\FP_1\tilde{g}\,d\xi
+\frac{\al}{3}\lag A_{12},\FP_1\tilde{g}\rag=0,
\end{align}
\begin{eqnarray}\label{Aij}
\left\{\begin{array}{rll}
\begin{split}
\pa_t&\lag A_{11},\FP_1\tilde{g}\rag +\frac{4e^{\beta t}}{3}\pa_xb_1
+e^{\beta t}\pa_x\lag \xi_1A_{11},\FP_1\tilde{g}\rag+2\beta\lag A_{11},\FP_1\tilde{g}\rag
\\&\qquad\qquad+\frac{4\al}{3}\lag A_{12},\FP_1\tilde{g}\rag+\lag L\tilde{g},A_{11}\rag=\lag \tilde{\CF},A_{11}\rag,\\
\pa_t&\lag A_{12},\FP_1\tilde{g}\rag +e^{\beta t}\pa_xb_2
+e^{\beta t}\pa_x\lag \xi_1A_{12},\FP_1\tilde{g}\rag+2\beta\lag A_{12},\FP_1\tilde{g}\rag\\
&\qquad\qquad+\al(a+2c)
+\al\lag A_{22},\FP_1\tilde{g}\rag+\lag L\tilde{g},A_{12}\rag=\lag \tilde{\CF},A_{12}\rag,\\
\pa_t&\lag A_{13},\FP_1\tilde{g}\rag +e^{\beta t}\pa_xb_3
+e^{\beta t}\pa_x\lag \xi_1A_{13},\FP_1\tilde{g}\rag+2\beta\lag A_{13},\FP_1\tilde{g}\rag\\
&\qquad\qquad
+\al\lag A_{23},\FP_1\tilde{g}\rag+\lag L\tilde{g},A_{13}\rag=\lag \tilde{\CF},A_{13}\rag,
\end{split}
\end{array}\right.
\end{eqnarray}
and
\begin{align}\label{B1}
\pa_t&\lag B_1,\FP_1\tilde{g}\rag +e^{\beta t}\pa_xc
+e^{\beta t}\pa_x\lag \xi_1B_1,\FP_1\tilde{g}\rag+\frac{\bet}{5}b_1+\frac{\beta}{10}\lag (|\xi|^2-3)\xi_1\sqrt{\mu},\FP_1\tilde{g}\rag\notag\\
&
+\frac{\al}{5}b_2+\frac{\al}{5}\int_{\R^3}\xi_1^2\xi_2\sqrt{\mu}\FP_1\tilde{g}\,d\xi
+\al\lag B_2,\FP_1\tilde{g}\rag+\lag L\tilde{g},B_1\rag=\lag \tilde{\CF},B_1\rag,
\end{align}
respectively. Here and in the sequel, we have denoted $[b_1,b_2,b_3]=\Fb.$
From \eqref{ab} and the initial condition \eqref{id.cons}, it follows that
\begin{align}\notag
\int_{\T}a\,dx=\int_{\T}b_i\,dx=0,\ 1\leq i\leq3.
\end{align}
We are now in a position to complete the

\begin{proof}[Proof of Theorem \ref{ge.th}]
The global existence of \eqref{betg.s4} follows from the standard continuation argument based on the local existence
which has been established in Section \ref{loc.sec} and the {\it a priori} estimate. In what follows, we intend to obtain
the {\it a priori} estimate
\begin{align}\label{apes}
\sup\limits_{0\leq t \leq T}&
e^{\la\beta_{\ga_0} t}\left\{\|w_{l}\pa_x^{\ga_0}g_1(t)\|_{L^\infty}+\al_{\ga_0} \|w_{l}\pa_x^{\ga_0}g_2(t)\|_{L^\infty}\right\}\notag\\
\leq& C\sum\limits_{\ga_0\leq2}\al_{\ga_0}\|w_{l}\pa^{\ga_0}_x\tilde{g}_{0}\|_{L^\infty},\ \ga_0\leq2,
\end{align}
under the {\it a priori} assumption that $[g_1,g_2]$ is a unique solution to the coupled system \eqref{beg1} and \eqref{beg2} and satisfies
\begin{align}\label{apas}
\sup\limits_{0\leq t \leq T}
e^{\la\beta_{\ga_0} t}\left\{\|w_{l}\pa_x^{\ga_0}g_1(t)\|_{L^\infty}+\al_{\ga_0} \|w_{l}\pa_x^{\ga_0}g_2(t)\|_{L^\infty}\right\}\leq \ka_0\al_{\ga_0},\ \ga_0\leq2.
\end{align}
Here, $0<\ka_0\ll 1$, and
\begin{align}\label{la}
0<\la<\frac{1}{4}\min\{1,\la_0\}
\end{align}
with $\la_0$ being determined as \eqref{dr.abc}.
Moreover, $\al_{\ga_0}$ and $\bet_{\ga_0}$  are defined as
\begin{align*}
\al_{\ga_0}=\left\{\begin{array}{rll}
\al,&\ \ga_0=0,\\
1,&\ \ga_0=1,2,\ \
\end{array}\right.
\beta_{\ga_0}=\left\{\begin{array}{rll}
\beta,&\ \ga_0=0,\\
1,&\ \ga_0=1,2.
\end{array}\right.
\end{align*}

\noindent\underline{Step 1. $L^\infty$ estimates.}
Recalling \eqref{h10m} and \eqref{h20m}, one has
\begin{align}\label{h1h2}
e^{\la\beta_{\ga_0} t}|h_1|\leq \sum\limits_{i=1}^3\CJ_i,\ \ e^{\la\beta_{\ga_0} t}|h_2|\leq \sum\limits_{i=4}^6\CJ_i,
\end{align}
with
\begin{align}
\CJ_1=e^{\la\beta_{\ga_0} t}\int_0^{t}e^{-\int_{s}^t\CA(\tau,V(\tau))\,d\tau}\left\{\chi_{M}w_l\CK
\left(\frac{h_1}{w_{l}}\right)\right\}(V(s))\,ds,\notag
\end{align}
\begin{align}
\CJ_2=&\frac{\beta}{2} e^{\la\beta_{\ga_0} t}\int_0^{t}e^{-\int_{s}^t\CA(\tau,V(\tau))\,d\tau}
|V(s)|^2\sqrt{\mu}(V(s))h_2(V(s))\,ds\notag\\&+\al e^{\la\beta_{\ga_0} t}\int_0^{t}e^{-\int_{s}^t\CA(\tau,V(\tau))\,d\tau}
\frac{V_1(s)V_2(s)}{2}\sqrt{\mu}(V(s))h_2(V(s))\,ds,\notag
\end{align}
\begin{align}
\CJ_3=e^{\la\beta_{\ga_0} t}\int_0^{t}e^{-\int_{s}^t\CA(\tau,V(\tau))\,d\tau}\left(w_{l}\tilde{H}\right)(V(s))\,ds,\notag
\end{align}
\begin{align}
\CJ_4=e^{\la\beta_{\ga_0} t}e^{-\int_{0}^t\CA(\tau,V(\tau))\,d\tau}(w_{l}\pa^{\ga_0}_x\tilde{g}_{0})(X(0),V(0)),\notag
\end{align}
\begin{align}
\CJ_5=e^{\la\beta_{\ga_0} t}\int_0^{t}e^{-\int_{s}^t\CA(\tau,V(\tau))\,d\tau}\left\{(1-\chi_{M})\mu^{-\frac{1}{2}}w_{l}\CK
\left(\frac{h_1}{w_{l}}\right)\right\}(V(s))\,ds,\notag
\end{align}
and
\begin{align}
\CJ_6=e^{\la\beta_{\ga_0} t}\int_0^{t}e^{-\int_{s}^t\CA(\tau,V(\tau))\,d\tau}\left[w_{l}K\left(\frac{h_2}{w_{l}}\right)\right](V(s))\,ds.
\notag
\end{align}
In what follows, we will compute $\CJ_i$$(1\leq i\leq6)$ term by term. Since $\beta>0$ is sufficiently small and $\la$ satisfies \eqref{la}, in view of \eqref{sglw}, \eqref{apas} and Lemmas \ref{op.es.lem} and \ref{CK},
one directly has
\begin{align}
\CJ_1\leq \frac{C}{l }\sum\limits_{0\leq s\leq t}e^{\la\beta_{\ga_0} s}\|h_1(s)\|_{L^\infty}
e^{\la\beta_{\ga_0} t}\int_0^{t}e^{-\frac{\nu_0}{2}(t-s)}e^{\la\beta_{\ga_0} s}\,ds
\leq \frac{C}{l }\sum\limits_{0\leq s\leq t}e^{\la\beta_{\ga_0} s}\|h_1(s)\|_{L^\infty},\notag
\end{align}
\begin{align}
\CJ_2\leq C\al\sum\limits_{0\leq s\leq t}e^{\la\beta_{\ga_0} s}\|h_2(s)\|_{L^\infty},\notag
\end{align}
\begin{align}
\CJ_3\leq C(\al+\ka_0)\sum\limits_{0\leq s\leq t}e^{\la\beta_{\ga_0} s}\left(\|h_1(s)\|_{L^\infty}+\|h_2(s)\|_{L^\infty}\right),\notag
\end{align}
\begin{align}
\CJ_4\leq e^{\la\beta_{\ga_0} t-\frac{\nu_0}{2}t}\|w_{l}\pa^{\ga_0}_x\tilde{g}_{0}\|_{L^\infty}\leq \|w_{l}\pa^{\ga_0}_x\tilde{g}_{0}\|_{L^\infty},\notag
\end{align}
\begin{align}
\CJ_5\leq C\sum\limits_{0\leq s\leq t}e^{\la\beta_{\ga_0} s}\|h_1(s)\|_{L^\infty}.\notag
\end{align}
For the delicate term $\CJ_6$, we first split it as
\begin{align}\notag
\CJ_6=e^{\la\beta_{\ga_0} t}\left\{\int_0^{t-\eps}+\int_{t-\eps}^{t}\right\}ds~e^{-\int_{s}^t\CA(\tau,V(\tau))d\tau}
\left[w_{l}K\left(\frac{h_2}{w_{l}}\right)\right](V(s))\eqdef \CJ_{6,1}+\CJ_{6,2},
\end{align}
where $\eps>0$ is small enough.
Then, for $\CJ_{6,2}$, by applying \eqref{sglw} and Lemma \ref{Ga}, one has
\begin{align}
|\CJ_{6,2}|\leq& C\sum\limits_{0\leq s\leq t}e^{\la\beta_{\ga_0} s}\|h_2(s)\|_{L^\infty}\int_{t-\eps}^{t}
e^{\la\beta_{\ga_0} t}e^{-\frac{\nu_0}{2} (t-s)}e^{-\la\beta_{\ga_0} s}\,ds\notag\\
\leq & C\eps\sum\limits_{0\leq s\leq t}e^{\la\beta_{\ga_0} s}\|h_2(s)\|_{L^\infty}\notag.
\end{align}
However, $\CJ_{6,1}$ needs more attentions.  We rewrite $\CJ_{6,1}$ as
\begin{align}\notag
\CJ_{6,1}=e^{\la\beta_{\ga_0} t}\int_0^{t-\eps}ds~e^{-\int_{s}^t\CA(\tau,V(\tau))\,d\tau}
w_{l}(V(s))\int_{\R^3}\Fk(V(s),\xi_\ast)\frac{h_2(s,X(s),\xi_\ast)}{w_{l}(\xi_\ast)}\,d\xi_\ast.
\end{align}
Then as for obtaining \eqref{CI8}, we divide the computations in the following three cases.

\medskip
\noindent{\it Case 1.} $|V(s)|\geq M$ with $M$ suitably large.
In view of Lemma \ref{Kop}, it follows that
$$
\int\mathbf{k}_w(V,\xi_\ast)\,d\xi_\ast\leq \frac{C}{(1+|V|)}\leq \frac{C}{M},
$$
which implies
\begin{align}
\CJ_{6,1}\leq& \sum\limits_{0\leq s\leq t}e^{\la\beta_{\ga_0} s}\|h_2(s)\|_{L^\infty}\int_{0}^{t-\eps}ds~e^{\la\beta_{\ga_0} t}e^{-\frac{\nu_0}{2} (t-s)}e^{-\la\beta_{\ga_0} s}\int_{\R^3}\mathbf{k}_w(V(s),\xi_\ast)\,d\xi_\ast\notag\\
\leq&\frac{C}{M}\sum\limits_{0\leq s\leq t}e^{\la\beta_{\ga_0} s}\|h_2(s)\|_{L^\infty}\int_{0}^{t-\eps}e^{-\frac{\nu_0}{4} (t-s)} \,ds\notag \\
\leq & \frac{C}{M}\sum\limits_{0\leq s\leq t}e^{\la\beta_{\ga_0} s}\|h_2(s)\|_{L^\infty}.\label{CJ611}
\end{align}

\noindent{\it Case 2.} $|V(s)|\leq M$ and $|\xi_\ast|\geq 2M$. In this situation one has
$|V(s)-\xi_\ast|\geq M$, so it holds that
\begin{equation*}
\mathbf{k}_w(V,\xi_\ast)
\leq Ce^{-\frac{\vps M^2}{16}}\mathbf{k}_w(V,v_\ast)e^{\frac{\vps |V-\xi_\ast|^2}{16}},
\end{equation*}
where one sees that the integral $\int\mathbf{k}_w(V,\xi_\ast)e^{\frac{\vps |V-\xi_\ast|^2}{16}}\,d\xi_\ast$ is further bounded according to Lemma \ref{Kop}.
Thus as for obtaining \eqref{CJ611}, one has
\begin{equation}\notag
\begin{split}
\CJ_{6,1}\leq Ce^{-\frac{\vps M^2}{16}}\sum\limits_{0\leq s\leq t}e^{\la\beta_{\ga_0} s}\|h_2(s)\|_{L^\infty}.
\end{split}
\end{equation}

\noindent{\it Case 3.} $|V|\leq M$, $|\xi_\ast|\leq 2M$. In this bad case, one possible way is to convert the bound in $L^\infty$-norm to the one in $L^2$-norm by an iteration approach. As what we have done in Case 3 in the proof of Lemma \ref{lifpri},
we compute $\CJ_{6,1}$ as follows
\begin{equation*}
\begin{split}
\CJ_{6,1}\leq& e^{\la\beta_{\ga_0} t}\int_0^{t-\eps}ds~e^{-\int_{s}^t\CA(\tau,V(\tau))\,d\tau}\int_{|\xi_\ast|\leq 2M}\mathbf{k}_{w,p}(V(s),\xi_\ast)|h_2(s,X(s),\xi_\ast)|\,d\xi_\ast
\\&+\frac{1}{M}\sum\limits_{0\leq s\leq t}e^{\la\beta_{\ga_0} s}\|h_2(s)\|_{L^\infty},
\end{split}
\end{equation*}
where $\mathbf{k}_{w,p}$ is given by \eqref{km}. Next, by plugging the above estimates for $\CJ_4$, $\CJ_5$ and $\CJ_6$ into the second inequality of \eqref{h1h2},
one has
\begin{align}
e^{\la\beta_{\ga_0} t}|h_2|\leq& C\|w_{l}\pa^{\ga_0}_x\tilde{g}_{0}\|_{L^\infty}+
C\sum\limits_{0\leq s\leq t}e^{\la\beta_{\ga_0} s}\|w_{l}h_1(s)\|_{L^\infty}
\notag\\&+Ce^{\la\beta_{\ga_0} t}\int_0^{t-\eps}ds~{\bf1}_{|V(s)|\leq M}e^{-\int_{s}^t\CA(\tau,V(\tau))d\tau}e^{-\la\beta_{\ga_0} s}\notag\\
&\qquad\qquad\qquad\times\int_{|\xi_\ast|\leq 2M}\mathbf{k}_{w,p}(V(s),v_\ast)e^{\la\beta_{\ga_0} s}|h_2(s,X(s),\xi_\ast)|\,d\xi_\ast.\notag
\end{align}
Substituting it again, we get
\begin{align}
e^{\la\beta_{\ga_0} t}|h_2|\leq& Ce^{\la\beta_{\ga_0} t}\int_0^{t-\eps}ds~{\bf1}_{|V|\leq M}e^{-\int_{s}^t\CA(\tau,V(\tau))d\tau}
e^{-\la\beta_{\ga_0} s}\int_{|\xi_\ast|\leq 2M}\mathbf{k}_{w,p}(V(s),\xi_\ast)e^{\la\beta_{\ga_0} s}\notag
\\&\times\int_0^{s-\eps}ds_1~{\bf1}_{|V(s_1)|\leq M}e^{-\int_{s_1}^s\CA(\tau,V(\tau))d\tau}e^{-\la\beta_{\ga_0} s_1}\notag
\\&\qquad\times\int_{|\xi_\ast'|\leq2M}\mathbf{k}_{w,p}(\xi_\ast,\xi'_\ast)e^{\la\beta_{\ga_0} s_1}|h_2(s_1,X(s_1),\xi'_\ast)|\,d\xi_\ast d\xi'_\ast\notag\\
&+C\|w_{l}\pa^{\ga_0}_x\tilde{g}_{0}\|_{L^\infty}+
C\sup\limits_{0\leq s\leq t}e^{\la\beta_{\ga_0} s}\|w_{l}h_1(s)\|_{L^\infty}.
\label{h2ifit2}
\end{align}
On the other hand, thanks to \eqref{chlp2}, it follows
\begin{align}
X(s_1)&=X(s_1;s,X(s),\xi_\ast)\notag\\
&=e^{\beta(s-s_1)}\left(X(s;t,x,\xi)-(s-s_1)\xi_{\ast1}-\frac{1}{2}\al(s-s_1)^2\xi_{\ast2}\right)\notag\\
&= e^{\beta(s-s_1)}\left(e^{\beta(t-s)}\Big(x-(t-s)\xi_1-\frac{1}{2}\al(t-s)^2\xi_2\Big)-(s-s_1)\xi_{\ast1}-\frac{1}{2}\al(s-s_1)^2\xi_{\ast2}\right),\notag
\end{align}
and
\begin{align}
V_1(s_1)&=V_1(s_1;s,X(s),\xi_\ast)=e^{\beta(s-s_1)}\left(\xi_{\ast1}+\al\xi_{\ast2}(t-s)\right),\notag\\
V_i(s_1)&=V_i(s_1;s,X(s),\xi_\ast)=e^{\beta(s-s_1)}\xi_{\ast i},\ \ i=2,3.\notag
\end{align}
Therefor, for $s-s_1\geq\eps$, one has
\begin{align}\notag
\left|\frac{\pa\xi_{\ast1}}{\pa X(s_1)}\right|=\frac{e^{-\beta(s-s_1)}}{s-s_1}\leq\eps^{-1}e^{-\beta(s-s_1)}.
\end{align}
Let $y=X(s_1)$, then it holds that
\begin{align}\notag
&\left|e^{\beta(s-s_1)}X(s;t,x,\xi)-\frac{1}{2}\al e^{\beta(s-s_1)}(s-s_1)^2\xi_{\ast2}-y\right|\\
&\qquad\qquad\qquad\leq e^{\beta(s-s_1)}|s-s_1||\xi_{\ast1}|\leq 2Me^{\beta(s-s_1)}|s-s_1|,\notag
\end{align}
where we have used the fact that $|\xi_\ast|\leq 2M$ in order to further estimate the integral term on the right hand side of \eqref{h2ifit2}. Consequently, if $\ga_0=0$, we can bound the integral term on the right hand side of \eqref{h2ifit2} as
\begin{align}
Ce^{\la\beta t}&\int_0^{t}ds\int_0^{s-\eps}ds_1e^{-\frac{\nu_0}{2}(t-s)}e^{-\frac{\nu_0}{2}(s-s_1)}e^{-\la\beta s_1}
\notag\\&\times\int_{|\xi'_\ast|\leq 2M}\int_{|\xi_{\ast2}|^2+|\xi_{\ast3}|^2\leq 4M^2}\left(\int_{|\xi_{\ast1}|\leq 2M}|
e^{\la\beta s_1}h_2(s_1,y,\xi'_\ast)|^2d\xi_{\ast_1}\right)^{\frac{1}{2}}d\xi_{\ast_2}d\xi_{\ast_3}d\xi'_{\ast}\notag\\
\leq&Ce^{\la\beta t}\int_0^{t}ds\int_0^{s-\eps}ds_1e^{-\frac{\nu_0}{2}(t-s)}e^{-\frac{\nu_0}{2}(s-s_1)}e^{-\la\beta s_1}
\frac{e^{-\frac{\beta}{2}(s-s_1)}}{(s-s_1)^{\frac{1}{2}}}
\notag\\&\times\int_{|\xi'_\ast|\leq 2M}\left(\int_{\Om'}|e^{\la\beta s_1}
g_2(s_1,y,\xi'_\ast)|^2\,dy\right)^{\frac{1}{2}}d\xi'_{\ast}\notag\\
\leq&Ce^{\la\beta t}\int_0^{t}ds\int_0^{s-\eps}ds_1e^{-\frac{\nu_0}{2}(t-s)}e^{-\frac{\nu_0}{2}(s-s_1)}e^{-\la\beta s_1}
\frac{e^{-\frac{\beta}{2}(s-s_1)}}{(s-s_1)^{\frac{1}{2}}}\notag\\
&\times \left(M^{\frac{1}{2}}e^{\frac{\beta}{2}(s-s_1)}|s-s_1|^{\frac{1}{2}}+1\right)
\left(\int_{\R^3}\int_{\T}|e^{\la\beta s_1}g_2(s_1,y,\xi'_\ast)|^2\,dyd\xi'_{\ast}\right)^{\frac{1}{2}}\notag\\
\leq&C\sup\limits_{0\leq s\leq t}e^{\la\beta s}\|g_2(s)\|,\notag
\end{align}
where we have denoted
$$
\Om'=\left\{y\bigg||e^{\beta(s-s_1)}X(s;t,x,\xi)-\frac{1}{2}\al e^{\beta(s-s_1)}(s-s_1)^2\xi_{\ast2}-y|\leq2M e^{\beta(s-s_1)}|s-s_1|\right\}.
$$

While for $\ga_0=1,2$, because
$\left|\frac{\pa X(s_1)}{\pa x}\right|=e^{\beta(t-s_1)}$,
we can also bound the integral term on the right hand side of \eqref{h2ifit2} as
\begin{align}
Ce^{\la t}&\int_0^{t}ds\int_0^{s-\eps}ds_1\,e^{-\frac{\nu_0}{2}(t-s)}e^{-\frac{\nu_0}{2}(s-s_1)}e^{-\la s_1}
\int_{|\xi'_\ast|\leq 2M}\left(\int_{|\xi_{\ast1}|\leq 2M}
|e^{\la s_1}h_2(s_1,y,\xi'_\ast)|^2\,d\xi_{\ast_1}\right)^{\frac{1}{2}}d\xi'_{\ast}\notag\\
\leq&Ce^{\la t}\int_0^{t}ds\int_0^{s-\eps}ds_1e^{-\frac{\nu_0}{2}(t-s)}e^{-\frac{\nu_0}{2}(s-s_1)}e^{-\la s_1}
\frac{e^{-\frac{\beta}{2}(s-s_1)}}{(s-s_1)^{\frac{1}{2}}}e^{\ga_{0}\beta(t-s_1)}\notag\\&\times\int_{|\xi'_\ast|\leq 2M}\left(\int_{\Om'}
|e^{\la s_1}\pa^{\ga_0}_yg_2(s_1,y,\xi'_\ast)|^2\,dy\right)^{\frac{1}{2}}d\xi'_{\ast}\notag\\
\leq&Ce^{\la t}\int_0^{t}ds\int_0^{s-\eps}ds_1\,e^{-\frac{\nu_0}{2}(t-s)}e^{-\frac{\nu_0}{2}(s-s_1)}e^{-\la s_1}
\frac{e^{-\frac{\beta}{2}(s-s_1)}}{(s-s_1)^{\frac{1}{2}}}e^{\ga_{0}\beta(t-s_1)}\notag\\
&\times \left(M^{\frac{1}{2}}e^{\frac{\beta}{2}(s-s_1)}|s-s_1|^{\frac{1}{2}}+1\right)\left(\int_{\R^3}\int_{\T}
|e^{\la s_1}\pa^{\ga_{0}}_yg_2(s_1,y,\xi'_\ast)|^2\,dyd\xi'_{\ast}\right)^{\frac{1}{2}}\notag\\
\leq&C\sup\limits_{0\leq s\leq t}e^{\la s}\|\pa^{\ga_{0}}_yg_2(s)\|,\notag
\end{align}
where we notice that $0<\beta\sim\al^2 \ll \nu_0$.

By plugging the above estimates into \eqref{h1h2},
we then conclude
\begin{align}\label{h1if}
e^{\la\beta_{\ga_0} t}|h_1|\leq C(\al+\ka_0)\sum\limits_{0\leq s\leq t}e^{\la\beta_{\ga_0} s}\|w_{l}h_2(s)\|_{L^\infty},
\end{align}
and
\begin{align}\label{h2if}
e^{\la\beta_{\ga_0} t}|h_2|\leq C\|w_{l}\pa^{\ga_0}_x\tilde{g}_{0}\|_{L^\infty}+
C\sum\limits_{0\leq s\leq t}e^{\la\beta_{\ga_0} s}\|w_{l}h_1(s)\|_{L^\infty}+C\sup\limits_{0\leq s\leq t}e^{\la\beta_{\ga_0} s}\|\pa_x^{\ga_0}g_2(s)\|.
\end{align}

\medskip
\noindent\underline{Step 2. $L^2$ estimates.}
Recall $\sqrt{\mu}\tilde{g}=g_1+\sqrt{\mu}g_2$.
We now denote
\begin{align}\notag
d_{ij}=\lag A_{ij},\FP_1\tilde{g}\rag=\lag A_{ij}\mu^{-\frac{1}{2}},\bar{\FP}_1g_1\rag+\lag A_{ij},\FP_1g_2\rag,
\end{align}
and we also use the notations
\begin{align}\notag
\FP_1\tilde{g}=\bar{\FP}_1g_1+\FP_1g_2, \ \FP_0\tilde{g}=\bar{\FP}_0g_1+\FP_0g_2,
\end{align}
with
$$
\bar{\FP}_1g_1=g_1-\bar{\FP}_0g_1,\ \textrm{and}\ \ \bar{\FP}_0g_1=(a_{1}+\Fb_{1}\cdot \xi+c_{1}(|\xi|^2-3))\mu(\xi).
$$
We now clarify the relation for $\FP_0g_2$, $\FP_1g_2$, $\FP_0\tilde{g}$ and $\FP_1\tilde{g}$. Noticing
$\sqrt{\mu}\tilde{g}=g_1+\sqrt{\mu}g_2$, one sees that
\begin{align*}
\FP_0g_2=\FP_0\tilde{g}-\FP_0\left(\frac{g_1}{\sqrt{\mu}}\right).
\end{align*}
Therefore it holds that
\begin{align}\label{pp1}
\|\FP_0g_2\|\leq\|\FP_0\widetilde{g}\|+\left\|\FP_0\left(\frac{g_1}{\sqrt{\mu}}\right)\right\|\leq
\|\FP_0\widetilde{g}\|+C\|w_{l}g_1\|_{L^\infty},\ \textrm{for} \ l >\frac{5}{2},
\end{align}
in particular,
\begin{align}\label{pp2}
&\|[a_2,\Fb_2]\|=\|[a-a_1,\Fb-\Fb_1]\|\leq \|\pa_x[a,\Fb]\|+C\|w_{l}g_1\|_{L^\infty},\notag\\
&\|c_2\|\leq \|c\|+C\|w_{l}g_1\|_{L^\infty},\ \textrm{for} \ l >\frac{5}{2}.
\end{align}
Likewise, one obtains that 
\begin{align}\label{pp3}
\|\lag\FP_1\tilde{g},|\xi|^3\mu^{\frac{1}{2}}\rag\|\leq\|\FP_1g_2\|+C\|w_{l}g_1\|_{L^\infty},\ \textrm{for} \ l >3.
\end{align}
Taking the summation of
$(\eqref{ab},\frac{\al e^{-\beta t}}{6}\int_0^x d_{12}\,dy)$ and
$(\eqref{c},c)$ and using integration by parts, one has
\begin{align}
\frac{d}{dt}&(b_1,\frac{\al e^{-\beta t}}{6}\int_0^x d_{12}\,dy)-( b_1,\frac{\al e^{-\beta t}}{6}\int_0^x \pa_td_{12}\,dy)
+\beta( b_1,\frac{\al e^{-\beta t}}{6}\int_0^x d_{12}\,dy)\notag
\\&+\beta (b_1,\frac{\al e^{-\beta t}}{6}\int_0^x d_{12}\,dy)
+\frac{\al}{6}(\pa_xa, \int_0^x d_{12}\,dy)+\frac{\al^2}{6}(b_2, \int_0^x d_{12}\,dy)\notag\\&-\frac{\al}{6}(\int_{\R^3}\xi_1^2\sqrt{\mu}\FP_1\tilde{g}\,d\xi, d_{12})
+\frac{1}{2}\frac{d}{dt}\|c\|^2+\beta (a,c)+2\beta \|c\|^2-\frac{e^{\beta t}}{3}(b_1,\pa_xc)
\notag\\&-\frac{e^{\beta t}}{6}(\int_{\R^3}\xi_1(|\xi|^2-3)\sqrt{\mu}\FP_1\tilde{g}\,d\xi,\pa_xc)=0,\label{es.c.p1}
\end{align}
where we have used the cancellation
\begin{align}
(\pa_xc,\int_0^x d_{12}\,dy)+(d_{12},c)=0.\notag
\end{align}
On the other hand, using the second equation of \eqref{Aij} and Lemma \ref{our.lem}, we have
\begin{align}\label{es.c.p2}
-( b_1,\frac{\al e^{-\beta t}}{6}\int_0^x \pa_td_{12}\,dy)=&\frac{2\al}{9}( b_1,b_2)
+\frac{\al}{6}(b_1,\lag \xi_1A_{12},\FP_1\tilde{g}\rag)+\frac{\beta\al e^{-\beta t}}{3}(b_1,\int_0^xd_{12} \,dy)\notag\\
&+\frac{\al^2 e^{-\beta t}}{6}(b_1,\int_0^x(a+2c)\,dy)
+\frac{\al^2 e^{-\beta t}}{6}(b_1,\int_0^x\lag A_{22},\FP_1\tilde{g}\rag \,dy)
\notag\\&+\frac{\al b_0 e^{-\beta t}}{3}(b_1,\int_0^xd_{12}\,dy)-\frac{\al e^{-\beta t}}{6}(b_1,\int_0^x\lag \tilde{\CF},A_{12}\rag \,dy).
\end{align}
As a consequence, \eqref{es.c.p1}, \eqref{es.c.p2} and \eqref{pp3} lead us to
\begin{align}\label{dis.c}
\frac{d}{dt}&\left\{\|c\|^2+(b_1,\frac{\al e^{-\beta t}}{3}\int_0^x d_{12}\,dy)\right\}
+2\bet\|c\|^2\notag\\
\leq& \eta_0 \|\lag B_1,\FP_1\tilde{g}\rag\|^2+C\al\|\lag \xi_1A_{12},\FP_1\tilde{g}\rag\|^2+\al\|d_{22}\|^2+\al\|d_{11}\|^2+\al\|d_{12}\|^2\notag\\&+\frac{C}{\eta_0}e^{2\beta t}\|\pa_x[a,\Fb,c]\|^2+C\al\|\lag \tilde{\CF},A_{12}\rag\|^2\notag\\
\leq& (\eta_0 +\al)(\|\FP_1g_2\|^2+\|w_{l}g_1\|^2_{L^\infty})+\frac{C}{\eta_0}e^{2\beta t}\|\pa_x[a,\Fb,c]\|^2+C\al^2\|\lag \tilde{\CF},A_{12}\rag\|^2,
\end{align}
where $\eta_0>0$ is an arbitrary constant and we also have used the   Poincar\'e inequality $\|u-\int_{\T}udx\|\leq C\|\pa_x u\|.$

We next compute carefully the last term on the right hand side of \eqref{dis.c}. First of all,
recalling the definition for $\tilde{\CF}$, one has
\begin{align*}
\|\lag \tilde{\CF},A_{12}\rag\|^2\lesssim& \int_{\T}\left(\int_{\R^3}|Q(\mu^{\frac{1}{2}}g_2,\mu^{\frac{1}{2}}g_2)||\xi_1\xi_2|\,d\xi\right)^2 dx
+\int_{\T}\left(\int_{\R^3}|Q(g_1,g_1)||\xi_1\xi_2|\,d\xi\right)^2 dx\notag\\
&+\int_{\T}\left(\int_{\R^3}|Q(g_1,\mu^{\frac{1}{2}}g_2)||\xi_1\xi_2|\,d\xi\right)^2 dx
+\int_{\T}\left(\int_{\R^3}|Q(\mu^{\frac{1}{2}}g_2,g_1)||\xi_1\xi_2|\,d\xi\right)^2 dx
\\
&+\al^2\int_{\T}\left(\int_{\R^3}Q(g_1,\mu^{\frac{1}{2}} (G_1+G_R)
+Q(\mu^{\frac{1}{2}} (G_1+G_R),g_1)\xi_1\xi_2\,d\xi\right)^{2} dx
\notag\\
&+\al^2\int_{\T}\left(\int_{\R^3}Q(\mu^{\frac{1}{2}}g_2,\mu^{\frac{1}{2}} (G_1+G_R)
+Q(\mu^{\frac{1}{2}} (G_1+G_R),\mu^{\frac{1}{2}}g_2)\xi_1\xi_2\,d\xi\right)^{2} dx\notag\\
\eqdef&\sum\limits_{i=1}^6\CH_i.\notag
\end{align*}
We then compute $\CH_i$$(1\leq i\leq6)$ term by term.
For $\CH_1$, Since $l >3$, thanks to Lemma \ref{op.es.lem} and the {\it a priori} assumption \eqref{apas}, it follows that
\begin{align}\notag
\CH_1\lesssim&  \|w_{l}Q(\mu^{\frac{1}{2}}g_2,\mu^{\frac{1}{2}}g_2)\|^2_{L^\infty}
\int_{\T}\left(\int_{\R^3}w_{-l}(\xi)|\xi_1\xi_2|\,d\xi\right)^2dx\lesssim \|w_l\mu^{\frac{1}{2}}g_2\|_{L^\infty}^4\lesssim \ka_0^2\|w_lg_2\|^2_{L^\infty}.
\end{align}
Similarly, it holds that
\begin{equation}\notag
\CH_2\lesssim \|w_{l}Q(g_1,g_1)\|^2_{L^\infty}
\int_{\T}\left(\int_{\R^3}w_{-l }|\xi_1\xi_2|\,d\xi\right)^2dx\lesssim \|w_{l}g_1\|^4_{L^\infty}\lesssim (\ka_0\al)^2\|w_{l}g_1\|^2_{L^\infty},
\end{equation}
and
\begin{align}
\CH_3,\CH_4\lesssim& \left(\|w_{l}Q(g_1,\mu^{\frac{1}{2}}g_2)\|^2_{L^\infty}+\|w_{l}Q(\mu^{\frac{1}{2}}g_2,g_1)\|^2_{L^\infty}\right)
\int_{\T}\left(\int_{\R^3}w_{-l }|\xi_1\xi_2|\,d\xi\right)^2dx\notag
\\ \lesssim& \|w_{l}g_1\|^2_{L^\infty}\|w_{l}g_2\|^2_{L^\infty}\lesssim \ka_0^2\|w_{l}g_1\|^2_{L^\infty}.\notag
\end{align}
Next, applying Lemma \ref{op.es.lem} and Theorem \ref{st.sol}, one directly has
\begin{equation}
\CH_5, \CH_6\lesssim\al^2\|w_{l}g_1\|_{L^\infty}^2+\al^2\|w_{l}g_2\|_{L^\infty}^2.\notag
\end{equation}

Putting now the above estimates for $\CH_i$$(1\leq i\leq6)$ into \eqref{dis.c}, one has
\begin{align}
\frac{d}{dt}&\left\{\|c\|^2+(b_1,\frac{\al e^{-\beta t}}{3}\int_0^x d_{12}\,dy)\right\}+2\beta\|c\|^2\notag\\
\leq& C(\eta_0+\al^2)\|\FP_1g_2\|^2+C(\al^4+\al^2\ka_0^2)\|w_{l}g_2\|_{L^\infty}^2\notag\\&+C(\al^2+\ka_0^2+\eta_0)\|w_{l}g_1\|_{L^\infty}^2
+\frac{C}{\eta_0}e^{2\beta t}\|\pa_x[a,\Fb,c]\|^2,\label{dis.c.p2}
\end{align}
where the Poincar\'e inequality has been also used. Thus \eqref{dis.c.p2} further implies that for $0<\la\leq \frac{1}{4}$,
\begin{align}
\sup\limits_{0\leq s\leq t}&e^{2\la\beta s}\|c(s)\|^2\notag\\
\leq& \|c(0)\|^2+\al\|[b_1,d_{12}](0)\|^2+\al\sup\limits_{0\leq s\leq t}e^{2\la\beta s}\|[b_1,d_{12}]\|^2
\notag\\&+C(\eta_0+\al^2)e^{2\la\beta t}\int_0^te^{-2\beta (t-s)}e^{-2\la\beta s}e^{2\la\beta s}\|\FP_1g_2\|^2(s)\,ds
\notag\\&+C(\al^2+\ka_0^2+\eta_0)e^{2\la\beta t}\int_0^te^{-2\beta (t-s)}e^{-2\la\beta s}e^{2\la\beta s}\|w_{l}g_1\|_{L^\infty}^2\,ds
\notag\\&+C(\al^4+\ka_0^2)e^{2\la\beta t}\int_0^te^{-2\beta (t-s)}e^{-2\la\beta s}e^{2\la\beta s}\|w_{l}g_2\|_{L^\infty}^2\,ds
\notag\\&+\frac{C}{\eta_0}e^{2\la\beta t}\int_0^te^{-2\beta (t-s)}e^{4\beta s}e^{-2\la s}e^{2\la s}\|\pa_x[a,\Fb,c]\|^2\,ds
\notag\\ \leq& \|c(0)\|^2+\al\|[b_1,d_{12}](0)\|^2
+\al\sup\limits_{0\leq s\leq t}e^{2\la\beta s}\|[b_1,d_{12}]\|^2\notag
\\&+\frac{C(\al^2+\eta_0)}{\beta}\sup\limits_{0\leq s\leq t}e^{2\la\beta s}\|\FP_1g_2(s)\|^2
+\frac{C(\al^2+\ka_0^2+\eta_0)}{\beta}\sup\limits_{0\leq s\leq t}e^{2\la\beta s}\|w_{l}g_1(s)\|_{L^\infty}^2
\notag\\&+\frac{C(\al^4+\al^2\ka_0^2)}{\beta}\sup\limits_{0\leq s\leq t}e^{2\la\beta s}\|w_{l}g_2(s)\|_{L^\infty}^2
+\frac{C}{\eta_0}\sup\limits_{0\leq s\leq t}e^{2\la s}\|\pa_x[a,\Fb,c]\|^2.\notag
\end{align}
Here we have used the following estimate
\begin{align}\label{decay.ip}
e^{2\la\beta t}&\int_{0}^te^{-2\beta(t-s)}e^{4\beta s}e^{-2\la s}e^{-2\la s}\|\pa_x[a,\Fb,c]\|^2\,ds\notag\\
\leq& \sup\limits_{0\leq s\leq t}e^{2\la s}\|\pa_x[a,\Fb,c]\|^2
\int_{0}^te^{-2\beta(t-s)}e^{2\la\beta t}e^{4\beta s}e^{-2\la s}\,ds\notag\\ \leq& C\sup\limits_{0\leq s\leq t}e^{2\la s}\|\pa_x[a,\Fb,c]\|^2,
\end{align}
due to the fact that $0<\beta\sim \al^2\ll 1$. Consequently, it follows that
\begin{align}\label{cd12.decay}
\al\sup\limits_{0\leq s\leq t}&e^{\la\beta s}\|c(s)\|\notag\\
\leq& \al\|c(0)\|^2+\al^{\frac{3}{2}}\|[b_1,d_{12}](0)\|+\al^{\frac{3}{2}}\sup\limits_{0\leq s\leq t}e^{\la\beta s}\|[b_1,d_{12}]\|
\notag\\&+ C(\al^2+\eta_0)^{\frac{1}{2}}\sup\limits_{0\leq s\leq t}e^{\la\beta s}\|\FP_1g_2(s)\|+C(\al^2+(\ka_0\al)^2+\eta_0)^{\frac{1}{2}}\sup\limits_{0\leq s\leq t}e^{\la\beta s}\|w_{l}g_1(s)\|_{L^\infty}
\notag\\&+C(\al^2+\al\ka_0)\sup\limits_{0\leq s\leq t}e^{\la\beta s}\|w_{l}g_2(s)\|_{L^\infty}
+\frac{C\al}{\sqrt{\eta_0}}\sup\limits_{0\leq s\leq t}e^{\la s}\|\pa_x[a,\Fb,c]\|.
\end{align}

On the other hand, taking the inner product of the first equation of \eqref{beg2} and $\FP_1g_2$ with respect to $(x,v)$
over $\T\times \R^3$, we also have by Lemma \ref{es.L}, \eqref{pp1}  and \eqref{pp2} that
\begin{align}
\frac{d}{dt}\|\FP_1g_2\|^2+\frac{\de_0}{2}\|\FP_1g_2\|^2
\leq& C\|w_{l}g_1\|_{L^\infty}^2+Ce^{2\beta t}\|\pa_x\FP_0g_2\|^2+C\al^2\|\FP_0g_2\|^2\notag\\
\leq& C\|w_{l}g_1\|_{L^\infty}^2+Ce^{2\beta t}\|\pa_x\FP_0g_2\|^2+C\al^2\|c\|^2,\notag
\end{align}
which further yields
\begin{align}\label{mic.decay}
\sup\limits_{0\leq s\leq t}e^{\la\beta s}\|\FP_1g_2(s)\|
\leq& \|\FP_1g_2(0)\|+C\sup\limits_{0\leq s\leq t}e^{\la\beta s}\|w_{l}g_1(s)\|_{L^\infty}
+C\sup\limits_{0\leq s\leq t}e^{\la s}\|\pa_x\FP_0g_2\|
\notag\\&+C\al\sup\limits_{0\leq s\leq t}e^{\la\beta s}\|c(s)\|.
\end{align}
Therefore, putting \eqref{cd12.decay} and \eqref{mic.decay} together, we have that for $\al^2=\eta_0$,
\begin{align}\label{zerol2}
\al\sup\limits_{0\leq s\leq t}&e^{\la\beta s}\|c(s)\|
+\al\sup\limits_{0\leq s\leq t}e^{\la\beta s}\|\FP_1g_2(s)\|\notag\\
\leq& \al\|c(0)\|^2+\al^{\frac{3}{2}}\|[b_1,d_{12}](0)\|+C\al\|\FP_1\tilde{g}_0\|
+C(\al+\ka_0\al)\sup\limits_{0\leq s\leq t}e^{\la\beta s}\|w_{l}g_1(s)\|_{L^\infty}
\notag\\&+C(\al^2+\ka_0\al)\sup\limits_{0\leq s\leq t}e^{\la\beta s}\|w_{l}g_2(s)\|_{L^\infty}
+C\al\sup\limits_{0\leq s\leq t}e^{\la s}\|w_l\pa_xg_1(s)\|_{L^\infty}
\notag\\&+C\sup\limits_{0\leq s\leq t}e^{\la s}\|\pa_x[a,\Fb,c]\|.
\end{align}

Let us now turn to deduce the higher order energy estimate. For this, we claim that
\begin{align}\label{1st.decay}
\sum\limits_{1\leq\ga_0\leq2}\sup\limits_{0\leq s\leq t}e^{\la s}\|\pa^{\ga_0}_xg_2\|(s)
\leq&\sum\limits_{1\leq\ga_0\leq2}\|\pa^{\ga_0}_x\tilde{g}_0\|
+C\sum\limits_{1\leq\ga_0\leq2}\sup\limits_{0\leq s\leq t}e^{\la s}\|w_{l}\pa^{\ga_0}_xg_1(s)\|_{L^\infty}
\notag\\&+C(\al+\ka_0)\sup\limits_{0\leq s\leq t}e^{\la s}\|w_{l}\pa_xg_2(s)\|_{L^\infty}.
\end{align}
Indeed, for some $\la_0>0$, the inner products 
\begin{center}
$(\pa_x\eqref{ab}_2,e^{\beta t}\pa^2_x a)$, $(\pa_x\eqref{B1},e^{\beta t}\pa^2_x c)$ and $(\pa_x\eqref{Aij}_i,e^{\beta t}\pa^2_xb_i)$
\end{center} 
together with \eqref{ab} and \eqref{c} give rise to
\begin{align}\label{dr.abc}
\frac{d}{dt}\CE_{int}+\la_0 \sum\limits_{1\leq \ga_0\leq2}e^{2\beta t}\|\pa^{\ga_0}_x[a,\Fb,c]\|^2\lesssim&
\sum\limits_{1\leq \ga_0\leq2}e^{2\beta t}\|\lag \varsigma_{i}, \pa^{\ga_0}_x\FP_1\tilde{g}\rag\|^2
\notag\\&+\|\lag L\pa_x\tilde{g},\varsigma_{i}\rag\|^2+\|\lag \pa_x\tilde{\CF},\varsigma_{i}\rag\|^2,
\end{align}
where we have set
\begin{align}\notag
\CE_{int}=\sum\limits_{i=1}^3(\pa_x\lag A_{1i},\FP_1\tilde{g}\rag,e^{\beta t}\pa^2_xb_i)+(\pa_x\lag B_1,\FP_1\tilde{g}\rag,e^{\beta t}\pa^2_xc)+\ka_1(\pa_xb_1,e^{\beta t}\pa^2_xa),
\end{align}
and $\varsigma_i$ denote all $A_{ij}$, $B_i$ and $\xi_1^2\xi_2\mu^{\frac{1}{2}}$ etc. appearing
in \eqref{ab}, \eqref{c}, \eqref{Aij} and \eqref{B1}. Moreover, the Poincar\'e inequality
$$
\|\pa_x[a,\Fb,c] \|\leq C\|\pa^2_x[a,\Fb,c] \|
$$
has been used here.
Furthermore, performing the similar calculations as for treating
$\|\lag \tilde{\CF},\varsigma_{i}\rag\|^2$ in \eqref{dis.c} above, one has
\begin{align}\label{Hzi}
\|\lag \pa_x\tilde{\CF},\varsigma_{i}\rag\|^2\lesssim& (\ka^2_0+\al^2)\left(\|w_{l}\pa_xg_2\|^2_{L^\infty}+\|w_{l}\pa_xg_1\|^2_{L^\infty}\right).
\end{align}
Lemma \ref{Ga} and
Lemma \ref{op.es.lem} with $l>3$ imply that
\begin{align}\label{Lzi}
\|\lag L\pa_x\tilde{g},\varsigma_{i}\rag\|\lesssim\|w_{l}\pa_xg_1\|_{L^\infty}+\|\pa_x\FP_1g_2\|.
\end{align}
Consequently, plugging \eqref{Hzi} and \eqref{Lzi} into \eqref{dr.abc} and employing \eqref{pp3}, we arrive at
\begin{align}\label{dr.abc.p1}
e^{-2\beta t}\frac{d}{dt}\CE_{int}+\la_0 \sum\limits_{1\leq\ga_0\leq2}\|\pa^{\ga_0}_x[a,\Fb,c]\|^2\leq&
C \sum\limits_{1\leq\ga_0\leq2}\|\pa^{\ga_0}_x\FP_1g_2\|^2
+C\sum\limits_{1\leq\ga_0\leq2}\|w_{l}\pa^{\ga_0}_xg_1\|_{L^\infty}^2
\notag\\&+C(\ka^2_0+\al^2)\|w_{l}\pa_xg_2\|^2_{L^\infty}.
\end{align}

On the other hand, energy estimate on \eqref{beg2} leads us to
\begin{align}\label{eng.g2p1}
\frac{d}{dt}&\|\pa^{\ga_0}_xg_2\|^2+\la\|\pa^{\ga_0}_x\FP_1g_2\|^2
\leq C_{\eta_1}\|\pa^{\ga_0}_x g_1\|^2+(C\al^2+\eta_1)\|\pa^{\ga_0}_x\FP_0g_2\|^2,
\end{align}
for $\ga_0=1,2$, where $\eta_1$ is positive and suitably small.

Combing \eqref{dr.abc.p1} and \eqref{eng.g2p1} together, one has that for $\ka_2>0$ and suitably small,
\begin{align}
\frac{d}{dt}\big\{\sum\limits_{1\leq\ga_0\leq2}\|\pa^{\ga_0}_xg_2\|^2&+\ka_2e^{-2\beta t}\CE_{int}\big\}+\sum\limits_{1\leq\ga_0\leq2}\la_0\left\{\|\pa^{\ga_0}_x\FP_1g_2\|^2+\|\pa^{\ga_0}_x[a,\Fb,c]\|^2\right\}
\notag\\
\leq& C \sum\limits_{1\leq\ga_0\leq2}\|w_{l}\pa^{\ga_0}_x g_1\|_{L^\infty}^2+C(\ka^2_0+\al^2)\|w_{l}\pa_xg_2\|^2_{L^\infty}.\notag
\end{align}
From the above energy inequality, we further obtain that for $0<\la\leq\frac{\la_0}{4}$,
\begin{align}\label{1st.decay.p1}
\sup\limits_{0\leq s\leq t}&e^{2\la s}\sum\limits_{1\leq\ga_0\leq2}\|\pa^{\ga_0}_xg_2(s)\|^2
\notag\\ \leq&\sum\limits_{1\leq\ga_0\leq2}\|\pa^{\ga_0}_xg_2(0)\|^2
+C \sum\limits_{1\leq\ga_0\leq2}e^{2\la t}\int_{0}^te^{-\la_0(t-s)}\|w_{l}\pa^{\ga_0}_x g_1(s)\|_{L^\infty}^2\,ds
\notag\\&+C(\ka^2_0+\al^2) e^{2\la t}\int_{0}^te^{-\la_0(t-s)}\|w_{l}\pa_x g_2(s)\|_{L^\infty}^2ds.
\end{align}
Therefore \eqref{1st.decay} follows from \eqref{1st.decay.p1} and the similar estimate as \eqref{decay.ip}.

\medskip
\noindent\underline{Step 3. Combination.}
We are now in position to obtain our final estimates \eqref{apes}. To do so, for $\ga_0=0$, we get from
the summation of \eqref{h1if}, $\al\times\eqref{h2if}$ and \eqref{zerol2} that
\begin{align}\label{hzero}
e^{\la\beta t}|w_{l}g_1|+\al e^{\la\beta t}|w_{l}g_2|\leq& C\al\|w_{l}\tilde{g}_{0}\|_{L^\infty}
+\al\|[c,d_{12}](0)\|+\al\|\FP_1\tilde{g}_0\|\notag
\\&+C\sup\limits_{0\leq s\leq t}e^{\la s}\|w_{l}\pa_xg_1(s)\|_{L^\infty}
+C\sup\limits_{0\leq s\leq t}e^{\la s}\|\pa_x[a,\Fb,c]\|.
\end{align}
As to $\ga_0=1,2$, we set $\ka_3>0$ sufficiently small, take the summation of \eqref{h1if} and $\ka_3\times\eqref{h2if}$, and plug \eqref{1st.decay} into the resultant inequality, so as to obtain
\begin{align}\label{h1st}
\sum\limits_{1\leq\ga_0\leq2}e^{\la t}\left\{|w_{l}\pa^{\ga_0}_xg_1|+ |w_{l}\pa^{\ga_0}_xg_2|\right\}
\leq&C\sum\limits_{1\leq\ga_0\leq2}\left\{\|\pa^{\ga_0}_x\tilde{g}_0\|+
\|w_{l}\pa^{\ga_0}_x\tilde{g}_{0}\|_{L^\infty}\right\}.
\end{align}
On the other hand, it follows that
\begin{align}\label{pl1}
\|\FP_1g_2\|\leq C\|w_{l}g_2\|_{L^\infty},\ \textrm{for}\ l >\frac{3}{2},
\end{align}
and for $l >\frac{5}{2}$, one has
\begin{align}\label{pl2}
\|\pa_x[a,\Fb,c]\|\leq& C\|\pa_x\FP_0\tilde{g}\|\leq C\|\pa_x g_2\|+C\|w_{l}\pa_xg_1\|_{L^\infty}
\notag\\ \leq& C\|w_{l}\pa_x g_2\|_{L^\infty}+C\|w_{l}\pa_xg_1\|_{L^\infty},
\end{align}
and
\begin{align}\label{pl3}
\|[c,b_1,d_{12}](0)\|\leq C\|w_{l}g_2\|_{L^\infty}+C\|w_{l}g_1(0)\|_{L^\infty}\leq C\|w_{l}g_2\|_{L^\infty}.
\end{align}
Finally, putting \eqref{hzero}, \eqref{h1st}, \eqref{pl1}, \eqref{pl2} and \eqref{pl3} together and adjusting constants, we have
\begin{align}\notag
\sum\limits_{\ga_0\leq2}e^{\la\beta_{\ga_0} t}\left\{|w_{l}\pa_x^{\ga_0}g_1|+ \al^{\ga_0}|w_{l}\pa_x^{\ga_0}g_2|\right\}
\leq C\sum\limits_{\ga_0\leq2}\al^{\ga_0}\|w_{l}\pa_x^{\ga_0}\tilde{g}_{0}\|_{L^\infty}.
\end{align}
Thus \eqref{apes} is valid.

\medskip
\noindent \underline{Step 4. Non-negativity.} We now turn to prove that the unique global solution constructed above is non-negative, i.e.
$e^{3\beta t}F(t,x,e^{\beta t}\xi)=G(\xi)+\tilde{f}(t,x,\xi)\geq0$ under the condition that $F_0(x,\xi)=G+\tilde{f}(0,x,\xi)\geq0$, which also indicates the non-negativity of the self-similar solution $G(v)$ obtained in Theorem \ref{st.sol} due to the large time asymptotic behavior \eqref{lif.decay}. To do so, let us start from the following linearized equation of \eqref{bef} in Section \ref{loc.sec}
\begin{eqnarray}\label{ap.bef}
\left\{
\begin{array}{rll}
\begin{split}
&\pa_t f+e^{\beta t}\xi_1\pa_xf-\bet \na_\xi\cdot(\xi f)-\al \xi_2\pa_{\xi_1}f +f\mathcal {V}(f')\\
&\qquad\qquad\qquad\qquad=Q_+(f',f'),\ t>0,\ x\in\T,\ \xi\in \R^3,\\
&f(0,x,\xi)=F_0(x,\xi),\ x\in\T,\ \xi\in \R^3,
\end{split}
\end{array}\right.
\end{eqnarray}
where
$$
\mathcal {V}(f')=\int_{\R^3}\int_{\S^2}B_0(\cos \ta)f'(\xi_\ast)\,d\om d\xi_\ast.
$$
One can see that if $F_0(x,\xi)\geq0$ and $f'(t,x,\xi)\geq0$, then any solution of \eqref{ap.bef} should be non-negative.
Denote $f=G+\tilde{f}$ and $f'=G+\tilde{f}'$, and decompose $\tilde{f}$ and $\tilde{f}'$ as
\begin{align}\notag
\tilde{f}=f_1+\sqrt{\mu}f_2,\ \ \tilde{f}'=f'_1+\sqrt{\mu}f'_2.
\end{align}
We now verify that there exists a unique solution in the form of $G+f_1+\sqrt{\mu}f_2$  to \eqref{ap.bef} under the condition that $[f'_1,f'_2]$ belongs to the function space
\begin{equation*}
\begin{split}
\FW^{\vps_0}_{\al,T}=&\Big\{(\CG_1,\CG_2)\bigg| \sup\limits_{0\leq t\leq T}\left\{\|w_{l}\CG_1(t)\|_{L^\infty}+\al\|w_{l}\CG_2(t)\|_{L^\infty}\right\}
\leq4\vps_0\al,\\
&\qquad\qquad\qquad\CG_1(0)=0,\ \CG_2(0)=\tilde{g}_{0},\ G+\CG_1+\mu^{\frac{1}{2}}\CG_2\geq0
\Big\}.
\end{split}
\end{equation*}
We now consider the coupled equations for $f_1$ and $f_2$
\begin{eqnarray}\label{bef1}
\left\{
\begin{array}{rll}
\begin{split}
&\pa_t f_1+e^{\beta t}\xi_1\pa_xf_1-\bet \na_\xi\cdot(\xi f_1)-\al \xi_2\pa_{\xi_1}f_1
+\frac{\bet}{2}|\xi|^2\mu^{\frac{1}{2}}f_2+\frac{\al}{2}\mu^{\frac{1}{2}}\xi_1\xi_2f_2+\nu_0f_1
\\[2mm]
&\qquad+(f_1+\mu^{\frac{1}{2}}f_2)\CV(\mu^{\frac{1}{2}}(\al G_1+\al G_R))
+(f_1+\mu^{\frac{1}{2}}f_2)\CV(f'_1+\mu^{\frac{1}{2}}f'_2)\\[2mm]
&\quad=\underbrace{\chi_{M}\CK f'_1+\CF_3(f'_1,f'_2)}_{\CF_4},\
t>0,\ x\in\T,\ \xi\in \R^3,\\[2mm]
&f_1(0,x,\xi)=0,\ x\in\T,\ \xi\in \R^3,
\end{split}
\end{array}\right.
\end{eqnarray}
and
\begin{eqnarray}\label{bef2}
\left\{
\begin{array}{rll}
\begin{split}
&\pa_t f_2+e^{\beta t}\xi_1\pa_xf_2-\bet \na_\xi\cdot(\xi f_2)-\al \xi_2\pa_{\xi_1}f_2+\nu_0f_2
\\[2mm]
&\quad=\underbrace{Kf_2'+\mu^{-1/2}(1-\chi_{M})\CK f'_1}_{\CF_5},\ t>0,\ x\in\T,\ \xi\in \R^3,\\[2mm]
&f_2(0,x,\xi)=\frac{F_0(x,\xi)-G(\xi)}{\sqrt{\mu}}\eqdef \tilde{g}_{0}(x,\xi),\ x\in\T,\ \xi\in \R^3,
\end{split}
\end{array}\right.
\end{eqnarray}
where
\begin{align}
\CF_3(f'_1,f'_2)=&
-\mu^{\frac{1}{2}}(\al G_1+\al G_R)\CV(f'_1+\mu^{\frac{1}{2}}f'_2)
+Q_+(f'_1+\mu^{\frac{1}{2}}f'_2,f'_1+\mu^{\frac{1}{2}}f'_2).\notag
\end{align}
Let $[f_1,f_2]$ be a solution of the pair of \eqref{bef1} and \eqref{bef2} with $[f'_1,f'_2]\in\FW^{\vps_0}_{\al,T}$.
Then the nonlinear operator $\CW$ is formally defined as
$$
\CW([f'_1,f'_2])=[\CW_1,\CW_2]([f'_1,f'_2])=[f_1,f_2].
$$
We next show that $\CW$ is a contraction mapping on $\FW^{\vps_0}_{\al,T}$.
To do this, let us first rewrite the solution of \eqref{bef1} and \eqref{bef2} as
\begin{align}\label{f12.sol}
w_l[f_1,\al f_2]=&\dis e^{-\int_{0}^t\CM(\tau,V(\tau))\,d\tau}[0,\al w_{l}\tilde{g}_{0}(X(0),V(0))]
\notag\\&+\int_0^{t}e^{-\int_{s}^t\CM(\tau,V(\tau))\,d\tau}w_l[\CF_4, \al\CF_5]\,ds,
\end{align}
where $\CM$ is a $2\times 2$  matrix given by
\begin{eqnarray*}
\begin{aligned} &\left[\begin{array} {cccc}
M_{11} \  \  \  & M_{12} \\[3mm]
0 \  \  \ & \al M_{22}
\end{array} \right],
\end{aligned}
\end{eqnarray*}
with
\begin{align}
M_{11}=&\nu_0-3\beta+2l \bet \frac{|V(\tau)|^2}{1+|V(\tau)|^2} +2l \al\frac{V_2(\tau)V_1(\tau)}{{1+|V(\tau)|^2}}
\notag\\&+\CV(\mu^{\frac{1}{2}}(\al G_1+\al G_R))
+\CV(f'_1+\mu^{\frac{1}{2}}f'_2)\geq\nu_0/2,\label{M11}
\\M_{12}=&\frac{\beta|V(\tau)|^2}{2}\mu^{\frac{1}{2}}+\frac{\al}{2}\mu^{\frac{1}{2}}V_2(\tau)V_1(\tau)
+\mu^{\frac{1}{2}}\CV(\mu^{\frac{1}{2}}(\al G_1+\al G_R))+
\mu^{\frac{1}{2}}\CV(f'_1+\mu^{\frac{1}{2}}f'_2),
\notag\\M_{22}=&\nu_0-3\beta+2l \bet \frac{|V(\tau)|^2}{1+|V(\tau)|^2}+2l \al \frac{V_2(\tau)V_1(\tau)}{{1+|V(\tau)|^2}}\geq\nu_0/2,\label{M22}
\end{align}
and moreover, $X(s)$ and $V(s)$ is defined as \eqref{chlp2}.

Given $[f'_1,f'_2]\in\FW^{\vps_0}_{\al,T}$, we now show that so $[f_1,f_2]$ is.
Since $[f'_1,f'_2]\in\FW^{\vps_0}_{\al,T}$, one sees that
$|M_{12}|\leq C(\al+\vps_0)$. With this, \eqref{M11} and \eqref{M22}, we have by utilizing Lemma \ref{op.es.lem} that
\begin{align}
|w_l[f_1,\al f_2]|\leq& e^{CT(\al+\vps_0)}\vps_0\al
+Te^{CT(\al+\vps_0)}\|w_l[\CF_4,\al \CF_5]\|_{L^\infty}
\notag\\ \leq& e^{CT(\al+\vps_0)}\vps_0\al+ Te^{CT(\al+\vps_0)}\left\{\|w_lf_1'\|_{L^\infty}+\|w_lf_1'\|^2_{L^\infty}+\al\|w_lf_2'\|_{L^\infty}+\|w_lf_2'\|^2_{L^\infty}\right\},\notag
\end{align}
which is further bounded by $4\vps\al$,
provided that $0<T\leq T_{\ast\ast}$ with $T_{\ast\ast}$ sufficiently small. Thus $\CW([f_1',f_2'])\in\FW^{\vps_0}_{\al,T_{\ast\ast}}.$ Note that
$G+f_1+\mu^{\frac{1}{2}}f_2\geq0$ follows from \eqref{ap.bef} and $f'\geq0.$ It remains now to verify that $\CW$ is a contraction.
In fact, given $[f_1',f_2']$, $[f_1'',f_2'']\in\FW^{\vps_0}_{\al,T_{\ast\ast}},$
from \eqref{f12.sol}, it follows
\begin{align*}
|w_l\CW_1([f_1',f_2'])&-w_l\CW_1([f_1'',f_2''])|+\al|w_l\CW_2([f_1',f_2'])-w_l\CW_2([f_1'',f_2''])|
\notag\\&\leq
\int_0^{t}e^{-\int_{s}^t\CM(\tau,V(\tau))d\tau}
w_l\left|[\CF_4,\al \CF_5](f_1',f_2')-[\CF_4,\al \CF_5](f_1''f_2'')\right|ds
\notag\\&\leq T_{\ast\ast}e^{C(\al+\vps_0)T_{\ast\ast}}\left\{\|w_l(f'_1-f''_1)\|_{L^\infty}
+\al\|w_l(f'_2-f''_2)\|_{L^\infty}+\|w_l(f'_2-f''_2)\|^2_{L^\infty}\right\}
\notag\\&\leq \frac{1}{2}\left\{\|w_l(f'_1-f''_1)\|_{L^\infty}
+\al\|w_l(f'_2-f''_2)\|_{L^\infty}\right\},
\end{align*}
provided that $T_{\ast\ast}>0$ is sufficiently small. Therefore, there exists a unique function $[f_1,f_2]\in\FW^{\vps_0}_{\al,T_{\ast\ast}}$
such that $[f_1,f_2]=\CW([f_1,f_2])$, namely \eqref{bef} admits a unique non-negative solution $f=G+f_1+\mu^{\frac{1}{2}}f_2$ with $[f_1,f_2]\in\FW^{\vps_0}_{\al,T_{\ast\ast}}$
for $T_{\ast\ast}>0$ small enough. Finally, since we have obtained the uniform bound \eqref{apes} in the previous steps of this section, one can extend the existing time interval of the above non-negative solution to an arbitrary time $t>0$. Thus, the proof of Theorem \ref{ge.th}
is completed.
\end{proof}

\noindent {\bf Acknowledgements:}
RJD was partially supported by the General Research Fund (Project No.~14302817) from RGC of Hong Kong and a Direct Grant from CUHK. SQL was supported by grants from the National Natural Science Foundation of China (contracts: 11731008 and 11971201). SQL would like to thank Department of Mathematics, CUHK for their hospitality during his visit in January-March in 2020. RJD would thank Florian Theil for introdcuing to him the problem in 2015, and also thank Alexander Bobylev for stimulating discussions on \cite{BNV-2019} during the conference ``Advances in Kinetic Theory" hosted by Chongqing University in October 2019.


\end{document}